\DeclarePairedDelimiterX{\normi}[1]
{|||}
{|||}
{\ifblank{#1}{\:\cdot\:}{#1}}
\numberwithin{equation}{section}
\newtheorem{theorem}{Theorem}[section]
\newtheorem{lemma}[theorem]{Lemma}
\newtheorem{assumption}{Assumption}[section]
\newtheorem{proposition}[theorem]{Proposition}
\newtheorem{corollary}[theorem]{Corollary}
\newtheorem{definition}[theorem]{Definition}
\newtheorem{remark}[theorem]{Remark}
\def\N{{\mathbb N}}
\def\Z{{\mathbb Z}}
\def\R{{\mathbb R}}
\newcommand{\E}{{\mathbb E}}
\renewcommand{\P}{{\mathbb P}}
\newcommand{\SEPa}{\text{\normalfont SEP($\boldsymbol \alpha$)}}
\newcommand{\RWa}{\text{\normalfont{RW($\boldsymbol \alpha$)}}}
\newcommand{\RWo}{\text{\normalfont{RW($\boldsymbol \omega$)}}}
\newcommand{\Aa}{A^{\boldsymbol \alpha}}
\newcommand{\Ao}{A^{\boldsymbol \omega}}
\newcommand{\Sa}{S^{\boldsymbol \alpha}}
\newcommand{\Xa}{{{\mathcal X}^{\boldsymbol \alpha}}}
\newcommand{\Xaf}{{\mathcal X}_f^{\boldsymbol \alpha}}
\newcommand{\lx}{\lambda^{\boldsymbol \alpha}_x}
\newcommand{\lox}{\lambda^{\boldsymbol \omega}_x}
\newcommand{\Da}{D^{\boldsymbol \alpha}}
\newcommand{\La}{L^{\boldsymbol \alpha}}
\newcommand{\RCM}{\text{\normalfont RCM}}
\newcommand{\ind}{\mathbf{1}}
\newcommand{\abs}[1]{\ensuremath{\left\lvert #1 \right\rvert}} 
\newcommand{\norm}[1]{\ensuremath{\left\lVert #1 \right\rVert}} 
\newcommand{\dd}{\text{\normalfont d}}
\newcommand{\closure}[2][3]{%
	{}\mkern#1mu\overline{\mkern-#1mu#2}}
\newcommand{\footremember}[2]{%
	\footnote{#2}
	\newcounter{#1}
	\setcounter{#1}{\value{footnote}}%
}
\let\@fnsymbol\@arabic
\title{{\bf \Large Hydrodynamics for the partial   exclusion process\\ in random environment}}
\author{Simone Floreani\footremember{Delft}{Delft Institute of Applied Mathematics, TU Delft,
		Delft, The Netherlands. \href{mailto:s.floreani@tudelft.nl}{s.floreani@tudelft.nl}.	}, 
	Frank Redig\footremember{Delft1}{Delft Institute of Applied Mathematics, TU Delft,
		Delft, The Netherlands. \href{mailto:f.h.j.redig@tudelft.nl}{f.h.j.redig@tudelft.nl}.	},
	Federico Sau\footremember{Paris5}{Institute of Science and Technology, IST Austria, Klosterneuburg, Austria. \href{mailto:federico.sau@ist.ac.at}{federico.sau@ist.ac.at}. 
} }
\providecommand{\keywords}[1]
{
	\small	
	\textbf{Keywords:}\quad #1
}
\begin{document}
\maketitle

	\begin{abstract}
		In this paper, we introduce a  random environment for the exclusion process in $\Z^d$ obtained by assigning  a maximal occupancy to each site. This maximal occupancy is allowed to randomly vary among sites, and partial exclusion occurs. 
		Under the assumption of ergodicity under translation and uniform ellipticity of the environment, we derive  a quenched hydrodynamic limit in path space by strengthening the mild solution approach initiated in \cite{nagy_symmetric_2002} and \cite{faggionato_bulk_2007}. To this purpose, we prove, employing the technology developed for the random conductance model, a homogenization result in the form of an arbitrary starting point quenched invariance principle for a single particle in the same environment, which is a result of independent interest. The self-duality property of the partial exclusion process allows us to transfer this homogenization result to the particle system and, then,  apply the tightness criterion in \cite{redig_symmetric_2020}.	
	\end{abstract}
\keywords{Hydrodynamic limit; Random 
	environment;  Random conductance model; Arbitrary starting point quenched invariance principle; Duality; Mild solution.}


\section{Introduction}\label{section introduction}

In recent years there has been extensive study of the scaling limit of random walks in both static and dynamic random environment. In this realm, the \textit{random conductance model} (RCM) takes a  prominent place. Various analytic tools have been developed to prove scaling properties such as quenched invariance principles, local central limit theorems as well as detailed estimates on the random walks such as heat kernel bounds (see, e.g., \cite{biskup_recent_2011} for an overview on the subject).

A natural next step
is to consider interacting particle systems in random environment, where particles  model transport of mass or energy, while the random environments model, e.g.,  impurities or defects in  the conducting material. The macroscopic effects of the environment may be studied  through  scaling limits such as hydrodynamic limits, fluctuations and large deviations around the hydrodynamic limit, as well as via the study of non-equilibrium behavior of systems coupled to reservoirs which, in random environment, is still a challenge. 

Due to the presence of the random environment, these systems are typically \textit{non-gradient} and standard gradients  methods to study the hydrodynamic behavior  do not carry on. Nevertheless, interacting particle systems with (self-)duality are especially suitable to make the step from single-particle scaling limits towards the derivation of the macroscopic equation for the many-particle system. Indeed, in such systems, the macroscopic equation can be guessed from the behavior of the expectation of the local particle density which, in turn, amounts to understand the scaling behavior of a single \textquotedblleft dual\textquotedblright\ particle. However, this intuitive \textquotedblleft transference principle\textquotedblright\  from the scaling limit of one random walker to the macroscopic equation has to be made rigorous.

\subsection{Model}\label{section:model}

In the present work, we introduce a  random environment for the exclusion process in $\Z^d$ obtained by assigning  a maximal occupancy  $\alpha_x\in\N$ to each site $x\in\Z^d$ and we study its hydrodynamic limit.

\begin{figure}[h]
	\centering
	\includegraphics[width=0.9\linewidth]{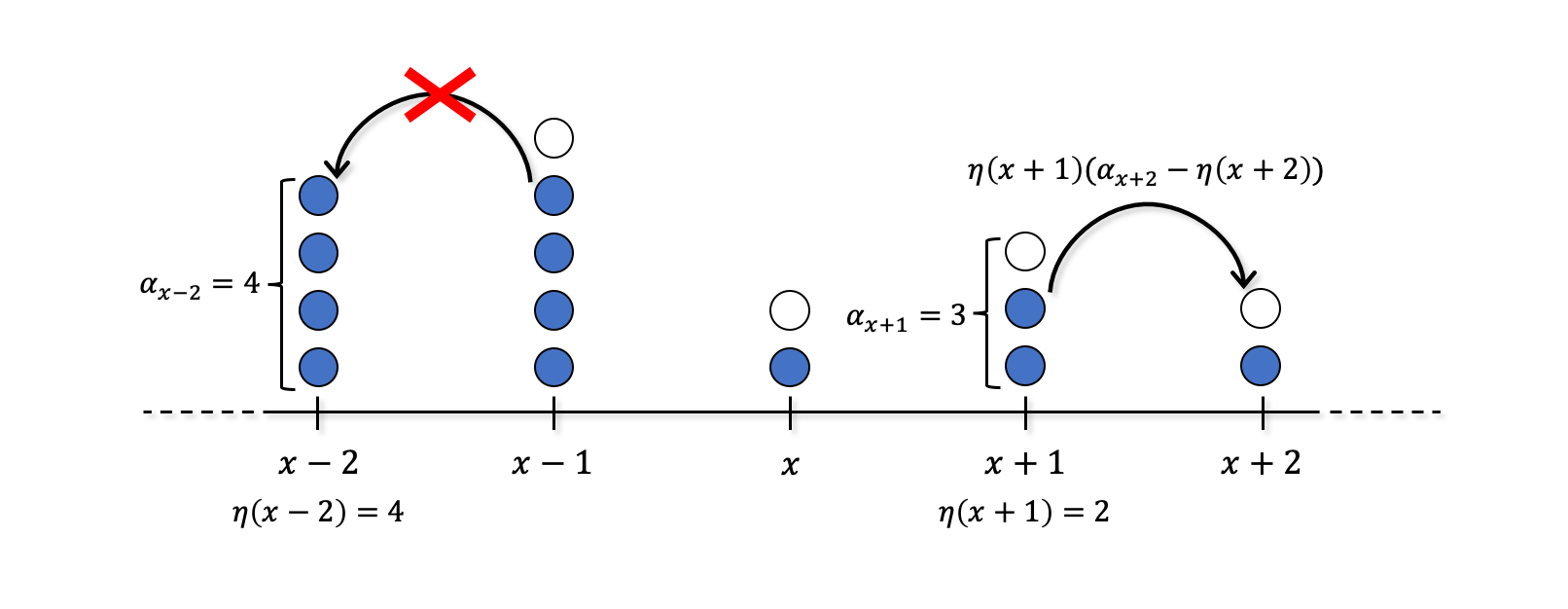} 
	\caption{Schematic description of the one-dimensional  partial exclusion process in the  environment $\boldsymbol \alpha = \{\alpha_x,\, x \in \Z\}$, where $\alpha_x \in \N$ denotes the maximal occupancy of site $x \in \Z$.}
	\label{fig:hdl}
\end{figure}
In what follows,  we refer to \textit{random environment} as the collection $\boldsymbol \alpha=\{\alpha_x,\, x\in\Z^d\}$, for which 
we assume the following.

\begin{assumption}[{Ergodicity and uniform ellipticity of $\boldsymbol{\alpha}$}]\label{definition  disorder} We fix a constant $\mathfrak c\in \N$ for which  the random environment $\boldsymbol \alpha=\{\alpha_x,\, x\in\Z^d\}$  is chosen according to a distribution $\mathcal P$ on $\{1,...,\mathfrak c\}^{\Z^d}$, which is stationary and ergodic under translations $\{\tau_x,\, x \in \Z^d\}$ in $\Z^d$. 
\end{assumption}
In particular, all realizations  $\boldsymbol \alpha$ of the random environment satisfy the following uniform upper and lower bounds: 	
\begin{equation}\label{equation uniform ellipticity}
1 \leq \alpha_x \leq \mathfrak c\ ,\qquad x \in \Z^d\ .
\end{equation}

Let us introduce the exclusion process in the environment $\boldsymbol \alpha$ (see Figure \ref{fig:hdl}) and indicate  the configuration of particles by $\eta= \{\eta(x),\, x\in\Z^d\}$,  consisting of a collection of occupation variables  indexed by the sites of $\Z^d$. These variables indicate the number of particles at each site, i.e.,
\begin{equation*}
\eta(x)\coloneqq	 \text{number of particles at}\ x\ .
\end{equation*}
We define the configuration space  $\mathcal X^{\boldsymbol \alpha}$ (endowed with the product topology) as
\begin{equation}
\mathcal X^{\boldsymbol \alpha}\coloneqq\Pi_{x\in \Z^d}\{0,...,\alpha_x\}\ ;
\end{equation}
here  the superscript emphasizes  the dependence of the configuration space  on the realization of the environment. Hence, given a realization $\boldsymbol \alpha$ of the random environment, the \textit{partial (simple) exclusion process in the  environment $\boldsymbol \alpha$}, abbreviated by $\SEPa$, is the Markov process on $\mathcal X^{\boldsymbol \alpha}$ whose generator acts on bounded cylindrical  functions $\varphi:\mathcal X^{\boldsymbol \alpha}\to \R$, i.e., functions which  depend only on a finite number of occupation variables,  as follows (all throughout the paper,  $\left|\cdot\right|$ will always denote the Euclidean norm):

\begin{align}\label{equation generator inhomog SEP}
\La \varphi(\eta)=\sum_{\substack{\{x,y\}\subseteq \Z^d,\\|x-y|=1}}&\left\{\begin{array}{r}\eta(x)(\alpha_y-\eta(y))\left(\varphi(\eta^{x,y})-\varphi(\eta)\right)\\[.15cm]	+\,\eta(y)(\alpha_x-\eta(x))\left(\varphi(\eta^{y,x})-\varphi(\eta)\right)\end{array}\right\}\ .
\end{align}
In the above formula, $\eta^{x,y}$ denotes the configuration obtained from $\eta$ by removing a particle (if any) from the  site $x$  and adding a particle to the site $y$, 	 i.e.,
\begin{equation}\label{equation configuration after the jump}
\eta^{x,y}=\begin{dcases}
\eta -\delta_x +\delta_y\ \ &\text{if}\ \eta(x)\ge 1\ \text{and}\ \eta(y)< \alpha_y\\
\eta\ &\text{otherwise}\ .
\end{dcases}
\end{equation}

Condition \eqref{equation uniform ellipticity}  ensures the existence of the process (see, e.g., \cite[Chapter 1]{liggett_interacting_2005-1}), which we call $\{\eta_t,\, t \geq 0 \}$, defined via the generator \eqref{equation generator inhomog SEP}. We highlight that  $\SEPa$ 	  is a inhomogeneous variant of the partial exclusion process considered in \cite{schutz_non-abelian_1994} (see also \cite{giardina_duality_2009}), where $\alpha_x=m$ for any $x \in \Z^d$ and $m$ is a  natural number, while, for the choice $\alpha_x=1$ for any $x\in\Z^d$, we recover the  simple symmetric exclusion process in $\Z^d$ (see, e.g., \cite{liggett_interacting_2005-1}). Moreover, if there is only one particle in the system, no interaction takes place and we are left with a single random walk in the environment $\boldsymbol{\alpha}$, that we call \emph{random walk in the random environment  $\boldsymbol \alpha$}, abbreviated by  $\RWa$. More precisely,	 $\RWa$ is the Markov process $\{X_t^{\boldsymbol \alpha},\, t \geq 0\}$ on $\Z^d$ with law $P^{\boldsymbol \alpha}$ induced by the	infinitesimal generator  given by
\begin{equation}\label{equation generator random walk in disorder}
\Aa f(x)\coloneqq\sum_{\substack{y\in \Z^d\\|y-x|=1}}\alpha_y\left(f(y)-f(x)\right)\ ,
\end{equation}
where $f:\Z^d\to \R$ is a bounded  function. For all $x \in \Z^d$,  let $X^{\boldsymbol \alpha,x} = \{X^{\boldsymbol \alpha,x}_t,\, t \geq 0 \}$ denote  the random walk $\RWa$ started in $x \in \Z^d$.

\medskip
\subsection{Quenched hydrodynamics and discussion of related literature}

The main result of this paper, Theorem \ref{theorem HDL}, states that,  under Assumption \ref{definition  disorder}, for almost every realization of the environment $\boldsymbol{\alpha}$,  the  path-space hydrodynamic limit of $\SEPa$ is a deterministic diffusion equation with a non-degenerate diffusion matrix not depending on the realization of the environment. To this purpose,  we run through the following steps. First, we show that  $\SEPa$ is dual to $\RWa$ and we express the occupation variables of $\SEPa$ at time $t$  as mild solutions of a lattice stochastic partial differential equation, linear in the drift. Then we  show that the microscopic disorder $\boldsymbol \alpha$ undergoes a homogenization effect, in the form of a quenched invariance principle for the random walks $\RWa$. In conclusion, we transfer this homogenization effect from the random walk to the interacting particle system via the aforementioned duality. 
To the essence, this transference principle boils down to the following two requirements:

\medskip

\begin{enumerate}[label={\normalfont(\roman*)},ref={\normalfont (\roman*)}]
	\item \label{it:transference1} \emph{Consistency of the initial conditions} (see Definition \ref{definition consistency } below) stating, roughly speaking, that a law of large numbers holds for the initial particle densities;
	
	\medskip
	
	\item \label{item: ASPQIP} The validity of a quenched homogenization result for the random walks $\RWa$ in the form of an \emph{arbitrary starting point quenched invariance principle} (see \eqref{eq:uniform_convergence_semigroups} below).
\end{enumerate}

\medskip

The mild solution approach to hydrodynamic limits in random environment has been initiated in \cite{nagy_symmetric_2002} in $\Z^d$ with  $d=1$  and further developed to any dimension and with less restrictive conditions in \cite{faggionato_bulk_2007}. Hence, the idea of deriving the hydrodynamic limit in random environment from a homogenization result for the dual random walk is not new. These works, though,  lack of a proof of path space tightness for the empirical density fields of the particle system, as more classical tightness criteria such as Aldous-Rebolledo and Censov (see, respectively, e.g., \cite{kipnis_scaling_1999} and \cite{de_masi_mathematical_1991}) do not apply when employing a mild solution representation for the density fields. 

On the other hand, along with the derivation of the limiting hydrodynamic equation, the proof of tightness for particle systems in  random environment has  been obtained  in several works by introducing the so-called \emph{corrected empirical density field}, an auxiliary process for which the evolution equation \textquotedblleft closes\textquotedblright\ and the aforementioned tightness criteria apply.  Thus, one has to face the extra step consisting in proving that the empirical density field and the corrected one are close in a suitable sense. The idea of the corrected empirical density field has been introduced in \cite{jara_quenched_2008} for the exclusion process with random conductances on $\Z^d$ with $d=1$ and later extended to the $d$-dimensional torus in  \cite{goncalves_scaling_2008}, with $d \geq 1$, and more general geometries in \cite{jara_hydrodynamic_2011}.
The construction of the corrected empirical density field as in \cite{goncalves_scaling_2008} is general enough to apply, by employing  the convergence of either the  random walk generators or the associated Dirichlet forms, also to different contexts, like in \cite{faggionato_hydrodynamic_2009} for a one-dimensional subdiffusive exclusion process, \cite{faggionato_zerorange_2010} for a zero range process with random conductances  and our context of site-varying maximal occupancy exclusion process. 
However, we believe that a general strategy to establish tightness and the hydrodynamic limit for sequences of tempered distribution-valued mild solutions may be of help when stochastic convolutions, although not being martingales, ensure a stronger space-time regularity of the stochastic processes as in the context, e.g., of Gaussian SPDEs. In \cite{redig_symmetric_2020}, in which the hydrodynamic limit of the simple exclusion process in presence of dynamic random conductances is studied,  a criterion for relative compactness, based on the notion of uniform stochastic continuity, has been presented. We apply this criterion to our context of partial exclusion, which has the advantages to directly apply to the sequence  of mild solutions and avoid the introduction of the auxiliary sequence of the corrected empirical density fields.

Next to the problem of ensuring relative compactness for the empirical density, another main challenge in the study of scaling limits of  particle systems in random environment is to  prove a homogenization result for the underlying environment. To get the desired homogenization result we employ, via a suitable random time change,  several concepts and results developed in the context of the random conductance model (RCM) (see, e.g., \cite{biskup_recent_2011}).
So far, the technology developed in the last two decades for RCM has not been employed in the context of particle systems in random environment, other types of convergence being preferred. In particular,  either $\Gamma$-convergence (see, e.g., \cite{jara_hydrodynamic_2011}) or  two-scale convergence (see, e.g., \cite{faggionato_cluster_2008, faggionato_stochastic_2020}) were employed to recover  quenched hydrodynamic limits for the simple exclusion process in more general settings than RCM with uniformly elliptic conductances.
 
For the $\RWa$ under Assumption \ref{definition  disorder}, one does not need such a level of generality and it is natural to try to use the existing quenched invariance principles for the random conductance model. However all quenched invariance principles for RCM (see, e.g., \cite{andres_invariance2013, andres_quenched_2018, andres_invariance2015, bella_quenched_2019, SidoraviciusSznitman}) are derived for the walk starting at the origin,  which is, in general, too weak as a convergence to ensure the quenched hydrodynamic limit for  the particle system. To fill the gap between quenched invariance principle and quenched hydrodynamic limit,  a  homogenization result involving the random walks $\RWa$ starting from \emph{all} spatial locations suffices. To this purpose, we choose to extend the quenched invariance principle valid for the random walk starting from the origin to walks starting from arbitrary sequences of starting points; we believe the latter to be a result of interest in its own right. Note that this strengthening is not trivial due to the lack of translation invariance of the law of the random walk in quenched random environment.

 The problem of deriving quenched arbitrary starting point invariance principles has been posed in \cite{Rodhes}  and only recently solved in 
\cite{chen_quenched_2015} for the  static random conductance model on the supercritical percolation cluster. 
In our context of random environment $\boldsymbol \alpha$, in order to  prove the quenched invariance principle with arbitrary starting positions for the dual random walk, we use  the formalism and ideas from \cite{chen_quenched_2015}. 

The connection between the quenched invariance principle in RCM and hydrodynamics in  random environment  seems to be promising, at least in the case of particle systems with self-duality, and this gives hope, for future works, to obtain path-space hydrodynamic limit also in degenerate environments. 
In conclusion, we remark that other strategies than self-duality to prove hydrodynamic limits for interacting particle systems in random environment are available and rely on the non-gradient methods (see, e.g., \cite{martinelli_hydrodynamic_2003}) and methods  based on  Riemann-characteristics for hyperbolic concentration laws (see, e.g., \cite{bahadoran2014}).

\

The remaining of the paper is organized as follows. In Section \ref{Section HDL} we state the main theorem -- the quenched hydrodynamic limit in path space -- and  explain the strategy of the proof in more detail. Section \ref{section invariance principles} is devoted to the arbitrary starting point quenched invariance principle and Section \ref{section proof HDL} to the proof of the hydrodynamic limit. The proofs of some  auxiliary results stated in the body of the paper are collected in separate  appendices at the end of the manuscript.

\section{Main result and strategy of the proof} \label{Section HDL}
As observable of the macroscopic
behavior of the interacting particle system, we consider  the empirical density fields, indicated, for all $N \in \N$, by  $\mathsf X^N=\{\mathsf X^N_t,\, t \geq 0 \}$.  Given, for a fixed realization of the environment $\boldsymbol \alpha$, a sequence of probability measures $\nu^{\boldsymbol \alpha}=\{\nu_N^{\boldsymbol \alpha}\}_{N\in\N}$ on the configuration space $\Xa$, for all $N \in \N$, the empirical density field $\mathsf X^N$ is a measure-valued process obtained as a function of  the system $\eta=\{\eta_t,\, t \geq 0\}$  as follows:
\begin{equation}\label{equation empirical density field}
\mathsf X^N_t \coloneqq\frac{1}{N^d}\sum_{x \in \Z^d} \delta_{\frac{x}{N}}\,  \eta_{tN^2}(x)\  ,
\end{equation}
where  $\eta$ is the process $\SEPa$ introduced in Section \ref{section introduction} initially distributed as $\nu_N^{\boldsymbol \alpha}$. We  refer to $\P^{\boldsymbol \alpha}_{\nu_N^{\boldsymbol \alpha}}$ as the probability measure on the Skorokhod space $\mathcal D([0,\infty),\Xa)$ of such process and let $\E^{\boldsymbol \alpha}_{\nu_N^{\boldsymbol \alpha}}$ denote the corresponding expectation, while  $\P^{\boldsymbol \alpha}_\eta$  and   $\E^{\boldsymbol \alpha}_\eta$ indicate the law and the corresponding expectation, respectively, of the process starting from the configuration $\eta$.	
We  note that the definition \eqref{equation empirical density field}   encodes a space-time diffusive rescaling of the microscopic system. Moreover, due to  the uniform upper bound in  \eqref{equation uniform ellipticity} on the maximal occupancies, 	 we  view (as done, e.g., in the textbook \cite[Chapter 2]{de_masi_mathematical_1991}) the
empirical density fields as  processes in $\mathcal D([0,\infty),\mathscr S'(\R^d))$; here, $\mathscr S'(\R^d)$ denotes the topological dual of the Schwartz class of smooth and rapidly decreasing functions $\mathscr S(\R^d)$ and $\mathcal D([0,\infty),\mathscr S'(\R^d))$  the Skorokhod space of $\mathscr S'(\R^d)$-valued c\`{a}dl\`{a}g trajectories. For further details on the construction and topologies of these spaces, we refer to, e.g.,  \cite[Chapter 2, Section 6]{de_masi_mathematical_1991},  \cite{mitoma_tightness_1983}, as well as \cite[Chapter 2, Section 4]{kallianpur_xiong_1995}. 	   \color{black}
Hence, for all $t \geq 0$, the action of $\mathsf X^N_t$ on the test function $G \in \mathscr S(\R^d)$ is given by
\begin{equation}\label{eq:density_fieldsG}
\mathsf X^N_t(G)\coloneqq \frac{1}{N^d}\sum_{x \in \Z^d} G(\tfrac{x}{N})\, \eta_{tN^2}(x)\ .
\end{equation}
Let us remark that this choice of the functional spaces $\mathcal D([0,\infty),\mathscr S'(\R^d))$, while being standard when studying fluctuation fields, is less canonical  in the context  of hydrodynamic limits (cf., e.g., \cite{kipnis_scaling_1999}). The motivation behind this choice is twofold. On the one side,  the nuclear structure of the pair $\mathscr S(\R^d)$ and $\mathscr S'(\R^d)$ allows, in Section \ref{section tightness}, to   employ Mitoma’s tightness criterion for processes in $\mathcal D([0,\infty),\mathscr S'(\R^d))$,  see \cite{mitoma_tightness_1983}.  On the other side, in Section \ref{section finite dimensional distributions}, we need that  $\mathscr S(\R^d)$ is dense and invariant under the action of the  semigroup on  $\mathcal C_0(\R^d)$ -- the Banach space of continuous and vanishing at infinity functions endowed with the supremum norm -- of the $d$-dimensional Brownian motion $\{B^\varSigma_t,\, t \geq 0 \}$ with diffusion matrix $\varSigma \in \R^{d \times d}$, i.e., the   strongly continuous and contraction semigroup $\{\mathcal S^\varSigma_t,\, t \geq 0 \}$ on $\mathcal C_0(\R^d)$ associated to the following second-order differential operator
\begin{equation*}
\mathcal A^\varSigma=\tfrac{1}{2}\nabla \cdot (\varSigma\, \nabla)\ .
\end{equation*}

As our goal is to study the limit of the $N$-th empirical density field $\mathsf X^N$ as $N$ goes to infinity,
we need to require that the initial particle configurations suitably rescale to a macroscopic  profile. We make this requirement precise in the following definition, in which $\mathscr P(\mathcal X^{\boldsymbol \alpha})$ denotes the space of probability measures on $\mathcal X^{\boldsymbol \alpha}$.

\begin{definition}[{Consistency of the initial conditions}]\label{definition consistency }
	We  say that, for a given environment $\boldsymbol \alpha$, a sequence of  probabilities $\nu^{\boldsymbol \alpha}\coloneqq\{\nu_N^{\boldsymbol \alpha}\}_{N\in\N}$ in $\mathscr P(\mathcal X^{\boldsymbol \alpha})$  is \emph{consistent to a continuous macroscopic  profile $ \bar{\rho} : \R^d \to [0,1]$}  if the following convergence
	\begin{equation}\label{equation consistency}
	\nu_N^{\boldsymbol \alpha}\left(  \left\{ \eta \in \mathcal X^{\boldsymbol \alpha}:\  \left| \frac{1}{N^d} \sum_{x \in \Z^d} G(\tfrac{x}{N})\eta(x) -   \int_{\R^d} G(u)\,\E_{\mathcal P}\left[\alpha_0\right]\bar{\rho}(u)\,	\dd u \right| > \delta	 \right\}\right)\ \underset{N\to \infty}\longrightarrow\ 0
	\end{equation}
holds  for all $G \in \mathscr S(\R^d)$ and $\delta > 0$.
\end{definition}

We are ready to state our main theorem,	 whose proof is deferred to Section \ref{section proof HDL} below. 
\begin{theorem}[{Hydrodynamic limit in quenched random environment}]\label{theorem HDL}
Let $\bar \rho : \R^d \to [0,1]$ 
	 be a continuous macroscopic  profile and, for all realizations of the environment $\boldsymbol \alpha$, let  $\nu^{\boldsymbol \alpha}=\{\nu_N^{\boldsymbol \alpha}\}_{N\in \N}$ be a  sequence of probabilities on $\mathscr P(\Xa)$. Recall Definition \ref{definition consistency }, define
	 \begin{equation}\label{eq:C}
	 	\mathfrak C\coloneqq \left\{\boldsymbol \alpha \in \{1,\ldots,\mathfrak c\}^{\Z^d}: \nu^{\boldsymbol \alpha}\ \text{is consistent with}\ \bar \rho \right\}\ ,
	 \end{equation}
 and assume that  $\mathcal P(\mathfrak C)=1$.

	Then, there exists two measurable subsets $\mathfrak A$ and $\mathfrak B \subseteq \{1,\ldots, \mathfrak c\}^{\Z^d}$ with $\mathcal P(\mathfrak A)=\mathcal P(\mathfrak B)=1$ (given, respectively, in \eqref{eq:A} and \eqref{eq:B} below) such that, for all $\boldsymbol \alpha \in \mathfrak A \cap \mathfrak B \cap \mathfrak C$ and for all $T > 0$, we have the following weak convergence in $\mathcal D([0,T],\mathscr S'(\R^d))$:
	\begin{equation}
	\left\{\mathsf X^N_t,\, t \in [0,T] \right\} \underset{N \to \infty}\Longrightarrow \left\{\pi^\varSigma_t,\, t \in [0,T] \right\}\ ,
	\end{equation}
	where the empirical density fields $\{\mathsf X^N_t,\, t \in [0,T]\}_{N \in \N}$ are  given as in \eqref{equation empirical density field} and
	\begin{equation}\label{eq:pi_t}
	\pi^\varSigma_t(\dd u)\coloneqq \E_{\mathcal P}\left[\alpha_0\right] \rho_t^\varSigma(u)\, \dd u\ ,
	\end{equation} with $\{\rho^\varSigma_t,\, t \geq 0 \}$ being the unique strong solution in $\R^d$  to 
	\begin{equation} \label{heat equation}
	\left\{
	\begin{array}{rcl}
	\partial_t \rho &=& \frac{1}{2} \nabla \cdot (\varSigma\,  \nabla \rho)\\[.1cm]
	\rho_0 &=& \bar{\rho} \ .
	\end{array}
	\right.
	\end{equation}
	In particular, the diffusion matrix $\varSigma \in \R^{d\times d}$ in \eqref{heat equation} and given in Proposition \ref{proposition:QFCLT_site_disorder} below is non-degenerate, symmetric, 	 positive-definite and does not depend on the particular realization of the environment.

\end{theorem}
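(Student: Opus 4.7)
My plan is to implement the three-step transference program outlined in Section \ref{section introduction}. First, I would use the self-duality of $\SEPa$ with $\RWa$ (in the variant developed in \cite{schutz_non-abelian_1994,giardina_duality_2009} for the maximal-occupancy process) to express the rescaled density field via the mild-solution formula. Setting $\rho_t(z)\coloneqq\eta_t(z)/\alpha_z$, a Dynkin plus variation-of-constants computation gives, schematically,
\begin{equation*}
\rho_{tN^2}(z)=(P^{\boldsymbol\alpha}_{tN^2}\rho_0)(z)+\mathcal{M}^{N}_{t}(z),
\end{equation*}
with $\{P^{\boldsymbol\alpha}_{t}\}_{t\geq 0}$ the semigroup of $\RWa$ and $\mathcal{M}^N_t$ a stochastic convolution against the martingale arising from the carré-du-champ of $\SEPa$. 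Using reversibility $\alpha_x p^{\boldsymbol\alpha}_t(x,y)=\alpha_y p^{\boldsymbol\alpha}_t(y,x)$, pairing $\mathsf X^N_t$ with $G\in\mathscr S(\R^d)$ then yields
\begin{equation*}
\mathsf X^N_t(G)=\frac{1}{N^d}\sum_{y\in\Z^d}\eta_0(y)\,(P^{\boldsymbol\alpha}_{tN^2}G_N)(y)+\mathcal M^N_t(G),\qquad G_N(y)\coloneqq G(y/N).
\end{equation*}

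For the convergence of finite-dimensional distributions I would proceed as follows. The arbitrary starting point quenched invariance principle of Section \ref{section invariance principles} translates, for $\boldsymbol\alpha\in\mathfrak A$, into uniform-on-compacts convergence $(P^{\boldsymbol\alpha}_{tN^2}G_N)(y)\to (\mathcal S^\varSigma_t G)(y/N)$, with the diffusion matrix $\varSigma$ and the factor $\E_{\mathcal P}[\alpha_0]$ arising from the random time change relating $\RWa$ to the associated RCM. Plugging this in and invoking the consistency hypothesis \eqref{equation consistency} applied to the smooth test function $\mathcal S^\varSigma_t G$, together with the self-adjointness of $\mathcal S^\varSigma_t$, gives $\E^{\boldsymbol\alpha}_{\nu_N^{\boldsymbol\alpha}}[\mathsf X^N_t(G)]\to \E_{\mathcal P}[\alpha_0]\int\bar\rho(u)(\mathcal S^\varSigma_t G)(u)\,\dd u=\pi^\varSigma_t(G)$. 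To upgrade $L^1$-convergence to convergence in probability (and hence to a deterministic limit at the process level), I would bound $\mathrm{Var}(\mathsf X^N_t(G))=O(N^{-d})$ via the uniform occupancy bound $\alpha_x\le\mathfrak c$ together with two-particle self-duality, reducing the second moment to an expectation over two coupled copies of $\RWa$. Multi-time marginals follow by iterating the same argument along the Markov property.

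For tightness in $\mathcal D([0,T],\mathscr S'(\R^d))$ I would invoke Mitoma's theorem to reduce to tightness of each real-valued process $\{\mathsf X^N_\cdot(G)\}_{N\in\N}$, and then apply the uniform-stochastic-continuity criterion of \cite{redig_symmetric_2020}, which is specifically tailored to mild-solution representations. Via the decomposition above, uniform stochastic continuity of $t\mapsto\mathsf X^N_t(G)$ splits into (i) a modulus-of-continuity estimate for $t\mapsto P^{\boldsymbol\alpha}_{tN^2}G_N$, furnished by uniform-in-$\boldsymbol\alpha$ heat-kernel bounds available from RCM technology under the uniform ellipticity \eqref{equation uniform ellipticity}, and (ii) an $L^2$ modulus-of-continuity bound for $\mathcal M^N_t(G)$, obtained from the same carré-du-champ computation used in the variance estimate. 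The full-measure set $\mathfrak B$ is precisely the one on which these kernel bounds hold. Combining tightness with the finite-dimensional convergence then identifies every subsequential limit as the deterministic path $\{\pi^\varSigma_t\}_{t\in[0,T]}$.

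The main obstacle is tightness: because the mild-solution formula destroys the semimartingale structure when tested against a single $G\in\mathscr S(\R^d)$, the classical criteria (Aldous--Rebolledo, Censov) do not apply, forcing the use of the mild-solution-adapted criterion of \cite{redig_symmetric_2020}. Feeding this criterion demands heat-kernel estimates for $\RWa$ that are uniform in the starting point, which is a strictly stronger input than the origin-based estimates customary in the RCM literature; its availability in our setting is precisely what the arbitrary starting point quenched invariance principle developed in Section \ref{section invariance principles} is designed to secure, and this is the reason the classical origin-based quenched invariance principles would not suffice to close the argument.
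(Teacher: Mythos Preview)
Your proposal follows the paper's approach closely: mild-solution representation via duality, convergence of one-time marginals through the arbitrary-starting-point invariance principle plus consistency plus a variance bound, and tightness via Mitoma together with the criterion of \cite{redig_symmetric_2020}. A few corrections and clarifications are in order, however.

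First, you have the roles of $\mathfrak A$ and $\mathfrak B$ swapped. In the paper, $\mathfrak A$ (see \eqref{eq:A}) is the full-measure set on which the ergodic theorem of Lemma \ref{lemma:ergodic_theorem} holds, while $\mathfrak B$ (see \eqref{eq:B}) is the set on which the quenched invariance principle from the origin holds. The heat kernel upper bounds (Proposition \ref{proposition:heat_kernel_bounds}) and the H\"older equicontinuity (Proposition \ref{prop:holder cont}) are \emph{deterministic} consequences of uniform ellipticity and hold for every environment, not just on some full-measure set; so ``$\mathfrak B$ is precisely the one on which these kernel bounds hold'' is not accurate.

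Second, ``uniform-on-compacts'' convergence of the semigroup is not enough to pass from $\mathsf X^N_0(S^{N,\boldsymbol\alpha}_{tN^2}G)$ to $\mathsf X^N_0(\mathcal S^\varSigma_t G)$: since the occupation variables live on all of $\Z^d$, you need an $L^1(\Z^d,\boldsymbol\alpha)$ statement (Corollary \ref{corollary:l1}), and this is where the ergodic theorem on $\mathfrak A$ enters alongside the invariance principle on $\mathfrak B$.

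Third, the paper does not use two-particle duality for the variance. It bounds the second moment of the stochastic convolution directly from the explicit predictable quadratic covariations \eqref{eq:predictable_cov1}--\eqref{eq:predictable_cov2}, obtaining (Lemma \ref{proposition variance}) a clean $O(N^{-d})$ bound via a Dirichlet-form identity; this is both simpler and sharper than routing through two dual walkers.

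Finally, for tightness the paper does not feed the criterion of \cite{redig_symmetric_2020} with a heat-kernel modulus of continuity (note the H\"older bound in Proposition \ref{prop:holder cont} degenerates at $t=0$). Instead it \emph{transfers} the modulus of continuity from the limiting Brownian semigroup: writing $\psi^N_\varepsilon(h)\le\psi_\varepsilon(h)+\phi^N_\varepsilon$ with $\psi_\varepsilon$ built from $\mathcal S^\varSigma$ and $\phi^N_\varepsilon$ the uniform semigroup error from Theorem \ref{theorem:uniform_convergence_semigroups}. This is precisely why the uniform-in-space, uniform-in-time semigroup convergence of Theorem \ref{theorem:uniform_convergence_semigroups} is the right homogenization input, rather than heat-kernel bounds alone.
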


\begin{remark}[{Existence and uniqueness of the limit}]\label{remark:uniquness}
	Let $\mathcal C_b(\R^d)$ denote the Banach space of continuous and bounded  functions from $\R^d$ to $\R$ endowed with the supremum norm.
	It is well-known (see, e.g., \cite[Chapter 2, Section 3.1, Theorem 1]{Evans}) that,  $\bar \rho$ being bounded and continuous, the   strong solution $\{\rho^\varSigma_t,\, t \geq 0\}$ to \eqref{heat equation} exists, is  unique and admits the following stochastic  representation in terms of the contraction and strongly continuous semigroup of Brownian motion $\{B^\varSigma_t,\, t \geq 0\}$ on $\mathcal C_b(\R^d)$,  still referred to -- with a slight abuse of notation --   as  $\{\mathcal S^\varSigma_t,\, t \geq 0\}$:
	\begin{equation}\label{eq:rho_t}
		\rho^\varSigma_t= \mathcal S^\varSigma_t\bar \rho\ ,\qquad t \geq 0\ .
	\end{equation}
Moreover, by \cite[Theorem 1.4]{holley_generalized_1978}, there exists a unique element $\left\{\pi_t,\, t \geq 0\right\}$ in the space of $\mathscr S'(\R^d)$-valued continuous trajectories $\mathcal C([0,\infty),\mathscr S'(\R^d))$ (see, e.g., \cite{holley_generalized_1978}, \cite[Chapter 2, Section 4]{kallianpur_xiong_1995}) such that either one of the following two identities hold  for all $t \geq 0$ and $G \in \mathscr S(\R^d)$:
\begin{equation}
	\pi_t(G)= \pi^{\bar \rho}(G)+\int_0^t \pi_s(\mathcal A^\varSigma G)\, \dd s
\qquad \text{or}\qquad
	\pi_t(G) = \pi^{\bar \rho}(\mathcal S^\varSigma_t G)\ ,
\end{equation}
where
\begin{equation}\label{eq:pi_0}
	\pi^{\bar \rho}(\dd u)\coloneqq \E_{\mathcal P}\left[\alpha_0\right]\bar \rho(u)\, \dd u\ .
\end{equation}
As a consequence of \eqref{eq:rho_t} and
\begin{equation}
	\int_{\R^d} \mathcal S^\varSigma_t G(u)\, H(u)\, \dd u = \int_{\R^d} G(u)\, \mathcal S^\varSigma_t H(u)\, \dd u\ ,\qquad G \in \mathscr S(\R^d)\ ,\ H \in \mathcal C_b(\R^d)\ ,\ t \geq 0\ ,	
\end{equation}	 such a unique element must coincide with $\{\pi^\varSigma_t,\, t \geq 0\}$ in \eqref{eq:pi_t}.	
\end{remark}

Before discussing the strategy of proof of Theorem \ref{theorem HDL}, we present  an ergodic theorem (Lemma \ref{lemma:ergodic_theorem} below) of importance at various stages of the paper; in particular,  this allows us to exhibit in Proposition \ref{proposition:slowly_varying} below a class of initial distributions  for $\SEPa$	 which verify the assumption of Theorem \ref{theorem HDL}.
 Preliminarily, we need the following definition.
\begin{definition}\label{definition:subsetF}
	A subset $\mathcal F$ of $\mathcal C_0(\R^d)$ is said to be \emph{equicontinuous} if
	\begin{equation}\label{eq:equicontinuous}
		\lim_{\delta\downarrow 0}	\sup_{\substack{u,v \in \R^d\\|u-v|<\delta}}\sup_{F \in \mathcal F}\left|F(u)-F(v) \right|=0
	\end{equation}
	holds,
	\emph{bounded} if
	\begin{equation}\label{eq:bounded}
		\sup_{F\in \mathcal F} \sup_{u\in \R^d}\left|F(u)\right|<\infty
	\end{equation}
	holds, and 
	\emph{uniformly integrable} if
	\begin{equation}\label{eq:uniform_integrability}
		\sup_{F \in \mathcal F}\left|F(u) \right|\leq f(u)\ ,\qquad u \in \R^d	\ ,
	\end{equation}
	holds for some function $f \in L^1(\R^d)\cap \mathcal C_0(\R^d)$.
\end{definition}

\begin{lemma}\label{lemma:ergodic_theorem}
	Under Assumption \ref{definition  disorder} on the environment, for $\mathcal P$-a.e.\ realization of the environment $\boldsymbol \alpha$, the following holds:
	\begin{quote}
		
		For all equicontinuous, bounded and uniformly integrable subsets $\mathcal F$ of $\mathcal C_0(\R^d)$ (see Definition \ref{definition:subsetF}), we have
		\begin{equation}
			\sup_{F \in \mathcal F}\left|\frac{1}{N^d}\sum_{x \in \Z^d} F(\tfrac{x}{N})\, \alpha_x - \E_{\mathcal P}\left[\alpha_0 \right]\int_{\R^d} F(u)\, \dd u	 \right|\underset{N\to \infty}\longrightarrow 0\ .
		\end{equation}
	\end{quote}
\end{lemma}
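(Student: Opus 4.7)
The plan is to write
\[ I_N(F) := \frac{1}{N^d}\sum_{x \in \Z^d} F(\tfrac{x}{N})\, \alpha_x - \E_{\mathcal P}[\alpha_0]\int_{\R^d} F(u)\, \dd u = J_N(F) + R_N(F), \]
separating the random fluctuation $J_N(F) := N^{-d}\sum_x F(\tfrac{x}{N})(\alpha_x - \E_{\mathcal P}[\alpha_0])$ from the deterministic Riemann-sum error $R_N(F) := \E_{\mathcal P}[\alpha_0]\bigl(N^{-d}\sum_x F(\tfrac{x}{N}) - \int F\bigr)$, and controlling both suprema over $\mathcal F$. For $R_N(F)$ I would split the sum into $|u|\leq M$ and $|u|>M$: the bulk is a classical Riemann-sum error uniformly bounded by a constant times $M^d\,\omega_{\mathcal F}(\sqrt d/N)$, with $\omega_{\mathcal F}(\delta) := \sup_{F\in\mathcal F}\sup_{|u-v|<\delta}|F(u)-F(v)|$, while the uniform domination $|F|\leq f\in L^1\cap\mathcal C_0(\R^d)$ bounds the tail by a Riemann sum for $\int_{|u|>M-1}f$; letting first $N\to\infty$ and then $M\to\infty$ yields $\sup_{F\in\mathcal F}|R_N(F)|\to 0$.

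The only probabilistic input I would use is the multidimensional Birkhoff ergodic theorem applied to the ergodic $\Z^d$-action $(\tau_x)_{x\in\Z^d}$ on $\boldsymbol\alpha$: for every rectangle $Q = \prod_{i=1}^d [a_i,b_i]$ with rational endpoints,
\[ \frac{1}{N^d}\sum_{x \in NQ \cap \Z^d}\alpha_x \;\xrightarrow[N\to\infty]{}\; |Q|\,\E_{\mathcal P}[\alpha_0] \qquad \mathcal P\text{-a.s.} \]
Taking the countable intersection over all rational rectangles produces a single full-measure event $\mathfrak E$ on which all these limits hold simultaneously; crucially, $\mathfrak E$ depends neither on $\mathcal F$ nor on the approximation scale chosen below.

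Working on $\mathfrak E$, I handle $J_N(F)$ by replacing $F$ with its piecewise-constant approximation $F^\varepsilon(u) := F(\varepsilon k)$ on $Q_k := \varepsilon k + [0,\varepsilon)^d$, for rational $\varepsilon>0$. The replacement error $|J_N(F)-J_N(F^\varepsilon)|$ is controlled inside $|u|\leq M$ by a constant times $\mathfrak c\,\omega_{\mathcal F}(\varepsilon\sqrt d)\,M^d$ (using $\alpha_x\leq\mathfrak c$ and equicontinuity), and outside by a tail estimate analogous to the one for $R_N$, based on $|F|,|F^\varepsilon|\leq f(\cdot)+\omega_f(\varepsilon\sqrt d)$ together with the uniform continuity of $f\in\mathcal C_0(\R^d)$. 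What remains is a finite linear combination
\[ \sum_{k:\,|\varepsilon k|\leq M+\varepsilon} F(\varepsilon k)\Bigl[\tfrac{1}{N^d}\sum_{x \in NQ_k\cap\Z^d}\alpha_x - \tfrac{|NQ_k\cap\Z^d|}{N^d}\,\E_{\mathcal P}[\alpha_0]\Bigr], \]
with coefficients bounded by $\sup_{F\in\mathcal F}\|F\|_\infty$, in which each bracket tends to zero on $\mathfrak E$. Given $\delta>0$, one picks $M$ large, then rational $\varepsilon$ small, and finally $N$ large, so as to bound each of the three contributions by $\delta/3$ uniformly in $F\in\mathcal F$. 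The main subtlety is to ensure that a single full-measure event works for every admissible class $\mathcal F$; this is precisely why the Birkhoff step is run over the countable family of rational rectangles rather than pointwise in $F$.
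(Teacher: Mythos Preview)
Your argument is correct and takes a genuinely different route from the paper's. The paper first invokes an external pointwise ergodic result (\cite[Proposition~3.2]{faggionato_stochastic_2020}) to obtain, on a single full-measure set $\mathfrak A$, the convergence $N^{-d}\sum_x G(\tfrac{x}{N})\alpha_x \to \E_{\mathcal P}[\alpha_0]\int G$ for all $G \in \mathcal C_c(\R^d)$; this gives weak convergence of the restricted measures $\mathsf Y^{N,\boldsymbol\alpha}_\ell$ on each ball $\closure{\mathcal B_\ell(0)}$, and uniformity over $\mathcal F$ is then extracted via a finite disjoint cover $\{V_i\}$ with $\mathsf Y_\ell(\partial V_i)=0$, combining equicontinuity with the Portmanteau-type fact $\mathsf Y^{N,\boldsymbol\alpha}_\ell(V_i)\to\mathsf Y_\ell(V_i)$. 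You instead split $I_N(F)$ additively into a deterministic Riemann error and a centred fluctuation, run Birkhoff's theorem directly over the countable family of rational rectangles (by inclusion--exclusion from boxes anchored at the origin), and pass to a piecewise-constant approximation. Your version is more self-contained, essentially reproving the $\mathcal C_c$ step rather than citing it, while the paper's is more measure-theoretic in flavour; both lean on equicontinuity for the approximation and on the dominating $f$ for the tail, and both produce a full-measure set independent of $\mathcal F$.

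One small slip: the bound $|F^\varepsilon|\leq f(\cdot)+\omega_f(\varepsilon\sqrt d)$ cannot be summed over $|x/N|>M$, since the constant $\omega_f(\varepsilon\sqrt d)$ is being added at infinitely many lattice points. The clean fix is to control the outside contribution of $J_N(F)$ directly by $2\mathfrak c\,N^{-d}\sum_{|x/N|>M}f(\tfrac{x}{N})$ \emph{before} introducing $F^\varepsilon$, and to perform the piecewise-constant approximation only on the bounded region $|x/N|\leq M$; your finite linear combination over $|\varepsilon k|\leq M+\varepsilon$ already reflects this truncation, so the repair is cosmetic.
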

The proof of Lemma \ref{lemma:ergodic_theorem} can be found in Appendix \ref{appendix:ergodic} below.
 Moreover, we find convenient to define
\begin{equation}\label{eq:A}
	\mathfrak A\coloneqq \left\{\boldsymbol \alpha \in \{1,\ldots, \mathfrak c\}^{\Z^d}: \text{the claim in Lemma \ref{lemma:ergodic_theorem} holds for $\boldsymbol \alpha$}\right\}\ .
\end{equation}

\

By a detailed balance computation, it is simple to check  that the following  product measures 
\begin{equation}\label{equation reversible measures}
	\nu_p^{\boldsymbol \alpha}=\underset{x\in \Z^d} \otimes \text{Binomial} (\alpha_x, p)\ ,		
\end{equation}
are reversible measures for $\SEPa$, for all parameters $p \in [0,1]$. In general, if the parameter $p$ depends on the site $x \in \Z^d$, the corresponding Bernoulli product measures are not invariant for the exclusion dynamics. Nevertheless,  as shown in Proposition \ref{proposition:slowly_varying} below, such probability measures with  slowly varying parameter satisfy the assumptions of Theorem \ref{theorem HDL}. 
\begin{proposition}\label{proposition:slowly_varying}
	For all  $\boldsymbol \alpha \in \mathfrak A$ (see \eqref{eq:A}) and for all continuous profiles $\bar \rho:\R^d\to [0,1]$, the sequence of probabilities $\{\nu^{\boldsymbol \alpha,\bar \rho}_N\}_{N\in \N}$ in $\mathscr P(\Xa)$ given, for all $N\in \N$, by
	\begin{equation}
		\nu^{\boldsymbol\alpha, \bar \rho}_N\coloneqq \underset{x\in \Z^d}\otimes\,  \text{\normalfont Binomial}(\alpha_x, \bar \rho(\tfrac{x}{N}))
	\end{equation}
is consistent with the continuous profile $\bar \rho$ (Definition \ref{definition consistency }), thus, satisfying the assumption of Theorem \ref{theorem HDL}.
\end{proposition}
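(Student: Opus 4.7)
The plan is to run a Chebyshev-type argument under the product measure $\nu_N^{\boldsymbol\alpha,\bar\rho}$, splitting the empirical functional $\tfrac{1}{N^d}\sum_{x\in\Z^d} G(\tfrac{x}{N})\eta(x)$ into its mean and a fluctuation that vanishes in $L^2$. Since under $\nu_N^{\boldsymbol\alpha,\bar\rho}$ the variables $\{\eta(x)\}_{x\in\Z^d}$ are independent with $\eta(x)\sim\mathrm{Binomial}(\alpha_x,\bar\rho(\tfrac{x}{N}))$, a direct computation gives
\[
m_N\coloneqq \E_{\nu_N^{\boldsymbol\alpha,\bar\rho}}\!\left[\tfrac{1}{N^d}\sum_{x\in\Z^d} G(\tfrac{x}{N})\eta(x)\right]=\tfrac{1}{N^d}\sum_{x\in\Z^d} G(\tfrac{x}{N})\,\bar\rho(\tfrac{x}{N})\,\alpha_x,
\]
and, using the elementary bound $\mathrm{Var}(\mathrm{Binomial}(\alpha_x,p))\leq\alpha_x\leq\mathfrak c$,
\[
\mathrm{Var}_{\nu_N^{\boldsymbol\alpha,\bar\rho}}\!\left(\tfrac{1}{N^d}\sum_{x\in\Z^d} G(\tfrac{x}{N})\eta(x)\right)\leq\tfrac{\mathfrak c}{N^d}\cdot\tfrac{1}{N^d}\sum_{x\in\Z^d} G(\tfrac{x}{N})^2.
\]

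Since $G\in\mathscr S(\R^d)$, the quantity $\tfrac{1}{N^d}\sum_{x} G(\tfrac{x}{N})^2$ is a Riemann sum of the integrable function $G^2$ and stays bounded in $N$, so the variance above is $O(N^{-d})$. By Chebyshev's inequality the problem therefore reduces to showing that $m_N\to \E_{\mathcal P}[\alpha_0]\int_{\R^d} G(u)\bar\rho(u)\,\dd u$ for $\mathcal P$-a.e.\ realization of $\boldsymbol\alpha$.

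For the convergence of $m_N$, I would invoke Lemma \ref{lemma:ergodic_theorem} with the singleton $\mathcal F=\{F\}$, where $F(u)\coloneqq G(u)\bar\rho(u)$. Since $\bar\rho$ is bounded and $G\in\mathscr S(\R^d)$ vanishes at infinity, the product $F$ lies in $\mathcal C_0(\R^d)$, hence is automatically uniformly continuous (so the singleton $\{F\}$ is equicontinuous in the sense of \eqref{eq:equicontinuous}) and bounded. Moreover $|F|\leq \|\bar\rho\|_\infty|G|$ with $\|\bar\rho\|_\infty|G|\in L^1(\R^d)\cap\mathcal C_0(\R^d)$, so the uniform integrability condition of Definition \ref{definition:subsetF} holds as well. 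Applied to any $\boldsymbol\alpha\in\mathfrak A$, the lemma then yields $m_N\to\E_{\mathcal P}[\alpha_0]\int_{\R^d} G\bar\rho\,\dd u$, which combined with the variance bound verifies the consistency requirement \eqref{equation consistency}.

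I do not foresee any substantive obstacle: the decomposition into mean plus fluctuation is forced by independence under $\nu_N^{\boldsymbol\alpha,\bar\rho}$, the variance estimate is immediate from $\alpha_x\leq\mathfrak c$, and the mean convergence is exactly what Lemma \ref{lemma:ergodic_theorem} has been designed to deliver. The only point to be careful about is that the macroscopic profile $\bar\rho$ is only assumed continuous and bounded (neither integrable nor Schwartz), which is precisely why multiplication by the Schwartz test function $G$ is needed in order to land in $\mathcal C_0(\R^d)\cap L^1(\R^d)$ and meet the hypotheses of the ergodic lemma.
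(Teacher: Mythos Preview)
Your proof is correct and follows essentially the same approach as the paper: compute the binomial mean and variance, use Chebyshev to reduce to convergence of the mean, and then apply Lemma~\ref{lemma:ergodic_theorem} to the single function $F=G\bar\rho$. The paper is slightly terser in checking the hypotheses of the ergodic lemma, but the argument is identical in substance.
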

\begin{proof}Note that, for all realizations of the environment $\boldsymbol \alpha$, $N \in \N$ and $x \in \Z^d$, one has
	\begin{equation}
		\E^{\boldsymbol \alpha}_{\nu_N^{\boldsymbol \alpha,\bar \rho}}\left[\eta(x)\right]=\alpha_x\bar \rho(\tfrac{x}{N})\qquad \text{and}\qquad \E^{\boldsymbol \alpha}_{\nu_N^{\boldsymbol \alpha,\bar \rho}}\left[\left(\eta(x)-\alpha_x\bar\rho(\tfrac{x}{N}) \right)^2 \right] = \alpha_x\bar\rho(\tfrac{x}{N})\left(1-\bar\rho(\tfrac{x}{N})\right)\ .
	\end{equation}
	Hence, by Chebyshev's inequality, for all $\delta > 0$ and $G \in \mathscr S(\R^d)$, 
	\begin{equation}
		\nu_N^{\boldsymbol \alpha,\bar \rho}\left(  \left\{ \eta \in \mathcal X^{\boldsymbol \alpha}:\  \left| \frac{1}{N^d} \sum_{x \in \Z^d} G(\tfrac{x}{N}) \left(\eta(x) -  \alpha_x\bar\rho(\tfrac{x}{N})\right) \right|\, >\, \delta\, \right\}\right) \underset{N\to \infty}\longrightarrow0 
	\end{equation}
	holds true for all  $\boldsymbol \alpha$. With the observation that, for all functions $G \in \mathscr S(\R^d)$ and continuous profiles $\bar \rho:\R^d\to [0,1]$, the product of $G$ and $\bar \rho$ is continuous, bounded and integrable, Lemma \ref{lemma:ergodic_theorem} yields the desired result for 	 $\boldsymbol \alpha \in \mathfrak A$.
	
\end{proof}

\subsection{Duality}
For all given environments $\boldsymbol \alpha$, 
$\SEPa$ and $\RWa$, besides being the latter a particular instance of the former when the system consists of only one particle, are connected through the  notion  of  \emph{stochastic duality}, or, shortly, \emph{duality}. This notion occurs in various contexts (see, e.g., \cite{liggett_interacting_2005-1}) and, in the particular case of interacting particle systems, turns useful when quantities of a many-particle system may be studied in terms of quantities of a simpler, typically a-few-particle, system. Moreover, when this duality relation is established between two copies of the same Markov process, one speaks about \emph{self-duality}.

$\SEPa$ is a self-dual Markov process, meaning that there exists a function $\Da : \Xaf \times \Xa \to  \R$ (with $\Xaf$ being the subset of configurations in $\Xa$ with finitely-many particles), called \emph{self-duality function}, given by
\begin{equation*}
\Da(\xi,\eta):=\prod_{x \in \Z^d} \frac{\eta(x)!}{(\eta(x)-\xi(x))!}\frac{(\alpha_x-\xi(x))!}{\alpha_x!}\ind_{\{\xi(x)\leq \eta(x)\}}\ ,
\end{equation*}
for which the following self-duality relation holds: for all $\xi\in\Xaf$ and $ \eta \in \Xa$, 
\begin{equation}
\La \Da(\cdot,\eta)(\xi)=\La \Da(\xi,\cdot)(\eta)\ 	.
\end{equation}	
In particular, the l.h.s.\	 corresponds to apply the generator $\La$ to the function
$D(\cdot,\eta)$ and evaluate the resulting function at $\xi$; similarly for the r.h.s..
This property was proven for the first time in \cite{schutz_non-abelian_1994} for the homogeneous partial exclusion, i.e., for $\alpha_x = m \in \N$ for all $x \in \Z^d$, (see also \cite{giardina_duality_2009}) and extends to the random environment context. 

We are interested in a particular instance of this self-duality property, namely when the dual configuration  consists in a single particle configuration, i.e., $\xi=\delta_x$ for some $x \in \Z^d$. In this case the function $\Da(\delta_x,\eta) \eqqcolon \Da(x,\eta)$ reads
\begin{equation}\label{equation duality function}
\Da(x,\eta)=\tfrac{\eta(x)}{\alpha_x}
\end{equation}
and the self-duality relation reduces to
\begin{equation}\label{equation duality}
\Aa \Da(\cdot,\eta)(x)=\La \Da(x,\cdot)(\eta)\ ,
\end{equation}
which may be checked by a straightforward computation. Relation \eqref{equation duality} has to be interpreted as a duality relation between $\SEPa$ and $\RWa$ with duality function $\Da$ given in \eqref{equation duality function}. 

Notice that the generator $\Aa$ is, in view of Assumption \ref{definition  disorder}, a bounded operator on both 	  Banach spaces $\ell^\infty(\Z^d,\boldsymbol \alpha)$ and  $\ell^1(\Z^d,\boldsymbol \alpha)$, where $\boldsymbol \alpha$ plays the role of reference measure on $\Z^d$  assigning	 to each site  $x\in\Z^d$ the positive value $\alpha_x$. Likewise, $\Aa$ is a bounded operator on the weighted Hilbert space $\ell^2(\Z^d,\boldsymbol \alpha)$ whose inner product is defined as 	
\begin{equation}\label{equation inner product}
\langle f, g\rangle:=\sum_{x\in \Z^d}f(x)\, 	g(x)\, \alpha_x\ .
\end{equation}
With a slight abuse of notation, we continue to use  $\langle \cdot,\cdot\rangle$ also for the bilinear map on $\ell^1(\Z^d,\boldsymbol \alpha) \times \ell^\infty(\Z^d,\boldsymbol \alpha)$ defined by the r.h.s.\ of \eqref{equation inner product}; moreover, we let $A_{\boldsymbol \alpha}$ and $\{S_t^{\boldsymbol \alpha},\ t \geq 0\}$ denote the generator and corresponding semigroup associated to $\RWa$, indistinguishably of the Banach space they act on.
	
As it follows from a detailed balance relation, $\RWa$ is reversible with respect to the weighted counting measure $\boldsymbol \alpha$. More precisely, 
$\Aa$ is self-adjoint in $\ell^2(\Z^d,\boldsymbol \alpha)$ and, moreover, for all $f \in \ell^1(\Z^d,\boldsymbol \alpha)$ (resp.\ $\ell^2(\Z^d,\boldsymbol \alpha)$) and $g \in \ell^\infty(\Z^d,\boldsymbol \alpha)$ (resp.\ $\ell^2(\Z^d,\boldsymbol \alpha)$) and for all $t \geq 0$, we have	
\begin{align}\label{eq:self-adjoint}
\langle \Sa_t f, g\rangle=\langle f, \Sa_t g\rangle\ 	,
\end{align}
or, equivalently,
\begin{equation}\label{eq:detailed_balance_probabilities}
	\alpha_x\, p^{\boldsymbol \alpha}_t(x,y) = \alpha_y\, p^{\boldsymbol \alpha}_t(y,x)\ ,\qquad x, y\in \Z^d\ ,\ t \geq 0\ ,
\end{equation}
for the corresponding transition probabilities.

\subsection{Strategy of the proof}\label{section strategy proof}
The self-duality relation \eqref{equation duality}  suggests that the limiting collective behavior of the particle density is connected to the limiting behavior of the diffusively rescaled $\RWa$.
Let us  describe the strategy of the proof of our main result and the role of this connection.
\subsubsection{Mild solution representation}\label{section:mild_solution}
As a first observation, by following closely  \cite{nagy_symmetric_2002} and \cite{faggionato_bulk_2007}, for all realizations of the environment $\boldsymbol \alpha$, we apply Dynkin's formula to the bounded cylindrical functions $\{\Da(x,\cdot\,) : \Xa \to \R \}_{x \in\Z^d}$ given in \eqref{equation duality function}: for all initial configurations $\eta \in \Xa$,	 we have
\begin{equation}\label{dynkin1}	
	 \Da(x,\eta_t)=\Da(x,\eta)+\int_0^t\La \Da(x,\cdot\,)(\eta_s)\, \dd s+	 M^{\boldsymbol \alpha}_t(x)\ ,\qquad x \in \Z^d\ ,\ t \geq 0\ ,
\end{equation}
where $\{M^{\boldsymbol \alpha}_t(x), t \geq 0 \}_{x \in \Z^d}$ is a family of martingales w.r.t.\ the natural filtration of the process  whose joint law is characterized in terms of their predictable quadratic covariations (see \eqref{eq:predictable_cov1}--\eqref{eq:predictable_cov2} below; for an explicit construction  of these martingales, see Appendix \ref{section:ladder} below).
We remark that in \eqref{dynkin1} above $\La$ acts on the function $\Da(\cdot,\cdot)$ w.r.t.\  the $\eta$-variables.
We recall from \eqref{equation duality} that the function  $\Da: \Z^d \times \Xa\to \R$ of the joint system  is a duality function between $\SEPa$ and $\RWa$. Hence, by using 	\eqref{equation duality}, we rewrite  \eqref{dynkin1} as
\begin{equation}
	\Da(x,\eta_t)=\Da(x,\eta)+\int_0^t\Aa \Da(\cdot,\eta_s)(x)\, \dd s + 	 M^{\boldsymbol \alpha}_t(x)\ ,\qquad x \in \Z^d\ ,\ t\geq 0\ ,
\end{equation}
yielding a  system (indexed by $x \in \Z^d$) of linear -- in the drift --  stochastic integral equations. As a consequence, the solution of this system may be represented as a mild solution by considering the semigroup $\{\Sa_t, t \geq 0 \}$ associated to the generator $\Aa$ of $\RWa$, i.e., we have
\begin{align}\label{mild solution}
\Da(x,\eta_t)=\Sa_t\Da(\cdot,\eta)(x)+\int_0^t \Sa_{t-s}\, \dd M^{\boldsymbol \alpha}_s(x)\ ,\qquad x \in \Z^d\ ,\ t \geq 0\ ,
\end{align} 
where 
\begin{equation}\label{eq:martingale_occupation_variable}
	\int_0^t \Sa_{t-s}\, \dd M^{\boldsymbol \alpha}_s(x):=\int_0^t \sum_{y\in \Z^d} P^{\boldsymbol \alpha}(X^{\boldsymbol \alpha,x}_{t-s}=y)\, \dd M^{\boldsymbol \alpha}_s(y)
	\end{equation} (for a definition of $X^{\boldsymbol \alpha,x}$ and its law, see the end of Section \ref{section:model}; for a proof of the absolute convergence of the latter infinite sum, we refer the reader to  Lemma \ref{lemma mild solution exclusion} below).

Combining  the definitions  \eqref{equation empirical density field}--\eqref{eq:density_fieldsG} and \eqref{equation duality function} with the  mild solution representation in \eqref{mild solution},  we rewrite the empirical density fields, for all test functions $G \in \mathscr S(\R^d)$, as follows:
\begin{align*}
\mathsf X^N_t(G) &= \frac{1}{N^d} \sum_{x \in \Z^d} G(\tfrac{x}{N})\, \Da(x,\eta_{tN^2})\, \alpha_x\\
&= \frac{1}{N^d} \sum_{x \in \Z^d} G(\tfrac{x}{N})\, \Sa_{tN^2}\Da(\cdot,\eta_0)(x)\, \alpha_x+  \frac{1}{N^d} \sum_{x \in \Z^d} G(\tfrac{x}{N})\left(\int_0^{tN^2} \Sa_{tN^2-s}\, \dd M^{\boldsymbol \alpha}_s(x)\right) \alpha_x\ .
\end{align*}
Furthermore, because both $\Aa$ and the corresponding semigroup are  self-adjoint in $\ell^2(\Z^d,\boldsymbol \alpha)$  (see \eqref{eq:self-adjoint}), we obtain:
\begin{align}\nonumber
\mathsf X^N_t(G)
& = \frac{1}{N^d} \sum_{x \in \Z^d} S^{N,\boldsymbol \alpha}_{tN^2}G(\tfrac{x}{N})\, \Da(x,\eta_0)\, \alpha_x+  \frac{1}{N^d} \sum_{x \in \Z^d} \left(\int_0^{tN^2} S^{N,\boldsymbol \alpha}_{tN^2-s}G(\tfrac{x}{N})\, \dd M^{\boldsymbol \alpha}_s(x)\right) \alpha_x\\
&=\ \nonumber  \frac{1}{N^d} \sum_{x \in \Z^d} S^{N,\boldsymbol \alpha}_{tN^2}G(\tfrac{x}{N})\, \eta_0(x)	+  \frac{1}{N^d} \sum_{x \in \Z^d} \left(\int_0^{tN^2} S^{N,\boldsymbol \alpha}_{tN^2-s}G(\tfrac{x}{N})\, \dd M^{\boldsymbol \alpha}_s(x)\right) \alpha_x\\
\label{elll}
&=\ \mathsf X^N_0(S^{N,\boldsymbol \alpha}_{tN^2}G)+ \int_0^t \dd	\mathsf  M^N_s(S^{N,\boldsymbol \alpha}_{tN^2-s}G)\  ,
\end{align}
where we adopted the shorthand, for all $G \in \mathscr S(\R^d)$,
\begin{equation}
\int_0^t \dd	\mathsf  M^N_s(S^{N,\boldsymbol \alpha}_{tN^2-s}G)\ :=\		\frac{1}{N^d} \sum_{x \in \Z^d} \left(\int_0^{tN^2} S^{N,\boldsymbol \alpha}_{tN^2-s}G(\tfrac{x}{N})\, \dd M^{\boldsymbol \alpha}_s(x)\right) \alpha_x\ ,	
\end{equation}
with
\begin{equation}\label{eq:semigroupsN}
S^{N,\boldsymbol \alpha}_tG(\tfrac{x}{N})\ :=\ \Sa_tG(\tfrac{\cdot}{N})(x)\ ,\qquad x \in \Z^d\ ,\ t \geq 0\ .
\end{equation}
Hence, we obtain in \eqref{elll} the same decomposition as in, e.g., \cite{nagy_symmetric_2002, faggionato_bulk_2007, faggionato_hydrodynamic_2009, redig_symmetric_2020}, in which the empirical density field is written as a sum of its expectation (the first term on the r.h.s.\ of \eqref{elll}), and \textquotedblleft noise\textquotedblright (the second term), which is not a martingale. 

\subsubsection{From the arbitrary starting point invariance principle towards the path space hydrodynamic limit}

As in those works, our first aim is to prove that, for $\mathcal P$-a.e.\ $\boldsymbol\alpha$, the finite-dimensional distributions of the empirical density fields converge in probability to those of the solution of the hydrodynamic equation \eqref{heat equation}. Moreover,   since convergence in probability of finite-dimensional distributions is implied by the convergence in probability of single marginals, it suffices to prove convergence of one-dimensional distributions. In particular,  we will  show in Section \ref{section proof HDL} below that,   for all $G \in \mathscr S(\R^d)$, $t \geq 0$ and  $\delta>0$,
	\begin{align}\label{equation variance va a 0}
		\P^{\boldsymbol \alpha}_{\nu_N^{\boldsymbol \alpha}}\left(\left|\int_0^{tN^2} \dd\mathsf M^N_s(S^{N,\boldsymbol \alpha}_{tN^2-s}G)  \right|> \delta\right ) \underset{N \to \infty}\longrightarrow\ 0\ 
	\end{align}
	holds for all environments $\boldsymbol \alpha$, and that (recall \eqref{eq:pi_0})
	\begin{align}\label{equation convergence finite dim dist} \P^{\boldsymbol \alpha}_{\nu_N^{\boldsymbol \alpha}}\left(\left|\mathsf X^N_0(S^{N,\boldsymbol \alpha}_{tN^2}G) - \pi^{\bar \rho}(\mathcal S^\varSigma_t G) \right| > \delta  \right)\underset{N \to \infty}\longrightarrow\ 0
	\end{align}
	holds for $\mathcal P$-a.e.\ environment
	 $\boldsymbol \alpha$.
	Hence, provided that $\{\mathsf X^N_t,\, t \geq 0\}$ is relatively compact in $\mathcal D([0,\infty),\mathscr S'(\R^d))$ and that all limit points belong to $\mathcal C([0,\infty),\mathscr S'(\R^d))$, 
	 \eqref{equation variance va a 0}--\eqref{equation convergence finite dim dist} and the uniqueness result in Remark \ref{remark:uniquness} would then yield a quenched (w.r.t.\ the environment law $\mathcal P$) convergence in probability of finite-dimensional distributions for the empirical density fields.

More specifically, the convergence in \eqref{equation variance va a 0}  (whose proof is close in spirit to that in all other related works) relies on Chebyshev's inequality  and  the uniform upper bound \eqref{equation uniform ellipticity} on the environment $\boldsymbol \alpha$.  This result is established in Section \ref{section finite dimensional distributions} below.  	
For what concerns  \eqref{equation convergence finite dim dist}, as done in the aforementioned references, the idea is to go through a homogenization result which ensures convergence -- in a sense to be made precise -- of semigroups  for $\mathcal P$-a.e.\ $\boldsymbol \alpha$. In particular,   provided $\boldsymbol \alpha$ is an environment for which  the following $L^1$-convergence
\begin{align}\label{eq:l1_convergence}
\frac{1}{N^d} \sum_{x \in \Z^d} \left|S^{N,\boldsymbol \alpha}_{tN^2}G(\tfrac{x}{N}) - \mathcal S^\varSigma_t G(\tfrac{x}{N})\right|\alpha_x\ \underset{N\to \infty}\longrightarrow\ 0\ ,\qquad t \geq 0\ ,
\end{align}
holds for all $G\in \mathscr S(\R^d)$, 
 Markov's inequality, the uniform boundedness of the occupation variables $\{\eta(x),\ x \in \Z^d\}$ and \eqref{eq:l1_convergence} yield
\begin{equation}\label{equation astqip}
\P^{\boldsymbol \alpha}_{\nu_N^{\boldsymbol \alpha}}\left(\left|\mathsf X^N_0(S^{N,\boldsymbol \alpha}_{tN^2}G) - \mathsf X^N_0(\mathcal S^\varSigma_tG) \right| > \delta	  \right)\underset{N \to \infty}\longrightarrow\ 0\ ,\qquad t \geq 0\ ,
\end{equation}
for that same environment $\boldsymbol \alpha$ and all test functions $G \in \mathscr S(\R^d)$.	 By combining \eqref{equation astqip} -- which will hold for $\mathcal P$-a.e.\ $\boldsymbol \alpha$ -- 	with the assumption of $\mathcal P$-a.s.\  consistency of initial conditions (see the statement of Theorem \ref{theorem HDL} and Definition \ref{definition consistency }), we obtain \eqref{equation convergence finite dim dist} for $\mathcal P$-a.e.\ $\boldsymbol \alpha$. 
All the details of the proof of \eqref{equation convergence finite dim dist} may be found in Proposition \ref{proposition finite-dim distributions} below.

In view of these considerations, the proof of convergence of the finite dimensional distributions of the empirical density fields boils down to show  \eqref{equation variance va a 0} and \eqref{eq:l1_convergence}. Several methods have been developed in, e.g., \cite{nagy_symmetric_2002, faggionato_bulk_2007, faggionato_hydrodynamic_2009, faggionato_stochastic_2020} to obtain \eqref{eq:l1_convergence}. The road we follow here is to derive \eqref{eq:l1_convergence} from quenched invariance principle results for random conductance models ($\RCM$) (see, e.g., \cite{biskup_recent_2011}) in the following two steps:

\begin{enumerate}[label={\normalfont (\roman*)},ref={\normalfont (\roman*)}]
	\item By viewing our random walks $\RWa$ as  random time changes of suitable $\RCM$, we derive from well-known analogous results in the context of $\RCM$, a \emph{quenched invariance principle} for  the random walk $\RWa$ \emph{started from the origin}.
	
	\item By means of the space-time H\"older equicontinuity of the semigroups $\{S^{N,\boldsymbol \alpha}_t,\, t \geq 0\}_{N \in \N}$ (see \eqref{eq:semigroupsN} for its definition), heat kernel upper bounds and building on the ideas in \cite[Appendix	 A.2]{chen_quenched_2015}, we obtain: for $\mathcal P$-a.e.\ realization of the environment $\boldsymbol \alpha$, 
	\begin{equation}
	\label{eq:uniform_convergence_semigroups0}
		\parbox{\dimexpr\linewidth-4em}{%
			\strut For all $T > 0$ and $G \in \mathcal C_0(\R^d)$, 	 
	\begin{equation*}
	\sup_{t \in [0,T]} \sup_{x \in \Z^d}
	\left|S^{N,\boldsymbol \alpha}_{tN^2}G(\tfrac{x}{N}) - \mathcal S_t^\varSigma G(\tfrac{x}{N})\right|\ \underset{N\to \infty}\longrightarrow\ 0\ ,
	\end{equation*}		
holds true.
	\strut
}
\end{equation}
\end{enumerate}
Relating the above convergence of Markov semigroups  to the weak convergence in path-space of the corresponding Markov processes, it is straightforward to check that  \eqref{eq:uniform_convergence_semigroups0} implies the weak convergence of the finite dimensional distributions of $\RWa$   with \emph{arbitrary starting positions}, i.e., for $\mathcal P$-a.e.\ $\boldsymbol \alpha$, 
\begin{equation*}
	\parbox{\dimexpr\linewidth-4em}{%
		\strut
		For all  $u\in \R^d$ and for any sequence of points $\{x_N\}_{N \in \N} \subseteq \Z^d$ such that $\tfrac{x_N}{N}\to u$ as $N \to \infty$, 
		\begin{equation*}
				E^{\boldsymbol \alpha}\left[G_1\left(\tfrac{X^{\boldsymbol \alpha,x_N}_{t_1N^2}}{N}\right)\cdots G_n\left(\tfrac{X^{\boldsymbol \alpha,x_N}_{t_n N^2}}{N}\right) \right]\underset{N\to \infty}\longrightarrow  \mathtt E\left[G_1\left(B^{\varSigma,u}_{t_1}\right)\cdots G_n\left(B^{\varSigma,u}_{t_n}\right) \right]
		\end{equation*} 
	holds true   for all $n \in \N$, $0 \leq t_1<\ldots < t_n$ and $G_1,\ldots, G_n \in \mathcal C_0(\R^d)$, where $\{B^{\varSigma, u}_t:=B^\varSigma_t+u,\ t \ge 0\}$ is the  Brownian motion introduced in Section \ref{Section HDL} started from $u \in \R^d$.
		\strut}
\end{equation*}
Moreover, as a direct consequence of the heat kernel upper bound in Proposition \ref{proposition:heat_kernel_bounds} below, the tightness of the random walks $\{\frac{1}{N}X^{\boldsymbol \alpha,x_N}_{tN^2},\, t \geq 0\}_{N \in \N}$ in $\mathcal D([0,\infty), \R^d)$ can also be derived (see, e.g., \cite[Lemma C.3]{redig_symmetric_2020}). In view of this implication, we will refer to \eqref{eq:uniform_convergence_semigroups0} as the \emph{arbitrary starting point invariance principle}. See also  Theorem \ref{theorem:uniform_convergence_semigroups} below for a slightly more precise statement regarding the convergence in \eqref{eq:uniform_convergence_semigroups0} and Remark \ref{remark: equivalence AQPQIP and convergence semigroup} below for a discussion on the equivalence between  \eqref{eq:uniform_convergence_semigroups0}  and the weak convergence in path-space of  the corresponding Markov processes; for a general result on the fact that the convergence in  \eqref{eq:uniform_convergence_semigroups0} implies convergence of the corresponding Markov processes we refer the interested reader to \cite[Theorem 4.29]{kurtz_semigroups_1975}.

 As shown in Corollary \ref{corollary:l1} below, the convergence in \eqref{eq:uniform_convergence_semigroups0} implies, in particular, for $\mathcal P$-a.e.\ $\boldsymbol \alpha$ and for all  $T > 0$ and $G \in \mathscr S(\R^d)$, 
\begin{align}\label{eq:l1sup_convergence0}
\sup_{t \in [0,T]}\frac{1}{N^d} \sum_{x \in \Z^d} \left|S^{N,\boldsymbol \alpha}_{tN^2}G(\tfrac{x}{N}) - \mathcal S^\varSigma_t G(\tfrac{x}{N})\right|\alpha_x\ \underset{N\to \infty}\longrightarrow\ 0\ .
\end{align}
Note that the above convergence differs from \eqref{eq:l1_convergence} by the uniformity of the convergence over bounded intervals of times. 

The results \eqref{eq:uniform_convergence_semigroups0} and \eqref{eq:l1sup_convergence0} are stronger than what is strictly needed  for the proof of convergence of finite dimensional distributions of the empirical density fields, but they  turn out to be very useful  in the proof of relative compactness of the probability distributions of
\begin{align}\label{eq:sequence_fields}
\left\{\mathsf X^N_t,\, t \in [0,T] \right\}_{ N \in \N}
\end{align}
in $\mathcal D([0,T],\mathscr S'(\R^d))$. Indeed, because the random walk $\RWa$ semigroups enter in the decomposition of the empirical density fields, it has to be expected that some sort of equicontinuity in time of such semigroups is needed for the  sequence \eqref{eq:sequence_fields} to be tight. This intuition can be made rigorous by means of a combination of the tightness criteria developed in \cite[Theorem 4.1]{mitoma_tightness_1983} and \cite[Appendix B]{redig_symmetric_2020}, which apply directly to the empirical density fields decomposed as mild solutions. We refer the reader	 to  Section \ref{section tightness} below for all the details on the proof of tightness.

\section{Arbitrary starting point quenched invariance principle}\label{section invariance principles}
This section is devoted to the proof of a quenched homogenization result for the dual random walk in random environment $\boldsymbol \alpha$, $\RWa$ with generator $A_{\boldsymbol \alpha}$ given in \eqref{equation generator random walk in disorder} and corresponding semigroup $\{\Sa_t,\, t \geq 0\}$. More precisely, we will prove the following theorem:
\begin{theorem}[{Arbitrary starting point quenched invariance principle}]\label{theorem:uniform_convergence_semigroups} 
There exists a measurable subset $\mathfrak B \subseteq \{1,\ldots, \mathfrak c\}^{\Z^d}$ (defined in \eqref{eq:B} below) with $\mathcal P(\mathfrak B)=1$ and such that, for all  environments $\boldsymbol \alpha \in \mathfrak B$, for all $T > 0$ and  $G \in \mathcal C_0(\R^d)$, 
	\eqref{eq:uniform_convergence_semigroups0} holds, i.e., 	
	\begin{equation}\label{eq:uniform_convergence_semigroups}
	\sup_{t \in [0,T]} \sup_{x \in \Z^d}
	\left|S^{N,\boldsymbol \alpha}_{tN^2}G(\tfrac{x}{N}) - \mathcal S_t^\varSigma G(\tfrac{x}{N})\right|\ \underset{N\to \infty}\longrightarrow\ 0\ .
	\end{equation}
\end{theorem}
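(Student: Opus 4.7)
The plan is to establish \eqref{eq:uniform_convergence_semigroups} in three stages: first, a quenched invariance principle for $\RWa$ starting from the origin; second, pointwise convergence of the rescaled semigroups along arbitrary starting sequences $x_N/N\to u\in\R^d$; and third, an Arzel\`a--Ascoli-type upgrade to uniform convergence in $(t,x)$. All three stages rely on viewing $\RWa$ as a bi-Lipschitz time change---through the site measure $\boldsymbol{\alpha}$, which is uniformly bounded under Assumption \ref{definition  disorder}---of the uniformly elliptic random conductance model with edge conductances $c^{\boldsymbol{\alpha}}_{x,y}\coloneqq\alpha_x\alpha_y\,\ind_{\{|x-y|=1\}}$. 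This reduction transfers the classical quenched functional CLT and the Gaussian-type heat kernel estimates from the $\RCM$ setting (recorded in this paper as Proposition \ref{proposition:QFCLT_site_disorder} and Proposition \ref{proposition:heat_kernel_bounds}) directly to $\RWa$.

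For the first two stages, let $\mathfrak{B}_0$ denote the full $\mathcal{P}$-measure set on which Proposition \ref{proposition:QFCLT_site_disorder} supplies the origin-started QIP with limiting diffusion matrix $\varSigma$. Since $\Z^d$ is countable and $\mathcal{P}$ is stationary, the intersection $\mathfrak{B}\coloneqq\bigcap_{z\in\Z^d}\tau_z^{-1}(\mathfrak{B}_0)$ is still of full $\mathcal{P}$-measure and is shift-invariant. For $\boldsymbol{\alpha}\in\mathfrak{B}$, the identity in law $X^{\boldsymbol{\alpha},z}_\cdot-z\stackrel{d}{=}X^{\tau_z\boldsymbol{\alpha},0}_\cdot$ combined with the origin-QIP for the shifted environment $\tau_z\boldsymbol{\alpha}\in\mathfrak{B}_0$ yields, for every $G\in\mathcal{C}_0(\R^d)$ and every sequence $(x_N)\subseteq\Z^d$ with $x_N/N\to u\in\R^d$, the pointwise convergence $S^{N,\boldsymbol{\alpha}}_{tN^2}G(x_N/N)\to\mathcal{S}^\varSigma_tG(u)$, uniformly in $t\in[0,T]$---provided the origin-QIP can be sustained along the sequence of shifted environments $\tau_{x_N}\boldsymbol{\alpha}$. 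Showing that this is possible for every diverging sequence $(x_N)$ is precisely the role of the Birkhoff ergodic / Borel-Cantelli-type argument of \cite[Appendix~A.2]{chen_quenched_2015}, applied to the indicator of $\mathfrak{B}_0$.

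For the upgrade to uniform convergence, the Gaussian heat kernel upper bound (Proposition \ref{proposition:heat_kernel_bounds}), combined with $G\in\mathcal{C}_0(\R^d)$, reduces the problem to uniform convergence on compact regions $[0,T]\times\overline{B}(0,R)$: indeed, both $S^{N,\boldsymbol{\alpha}}_{tN^2}G(x/N)$ and $\mathcal{S}^\varSigma_tG(x/N)$ are bounded in absolute value by a common $\omega_G(R,T)\to 0$ as $R\to\infty$, uniformly in $N$ and $|x|/N\geq R$. On such compact sets, a quenched parabolic H\"older estimate for $(t,x)\mapsto S^{N,\boldsymbol{\alpha}}_{tN^2}G(x/N)$---deducible from the heat kernel bound via a Nash-type argument, or directly from the $\RCM$ H\"older regularity under uniform ellipticity---establishes equicontinuity of the family in $N$, so that Arzel\`a--Ascoli combined with the pointwise convergence from the previous paragraph promotes the result to \eqref{eq:uniform_convergence_semigroups}. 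The hard part of this scheme is the passage to arbitrary starting points while keeping the environment $\boldsymbol{\alpha}$ fixed: the failure of translation invariance of the quenched law of $\RWa$ means that the origin-QIP cannot be invoked verbatim at $x_N$, and one must exploit the stationarity and ergodicity of $\mathcal{P}$ to show that $\tau_{x_N}\boldsymbol{\alpha}\in\mathfrak{B}_0$ densely enough along every diverging sequence $(x_N)$; all other ingredients follow in a standard way from the uniform ellipticity provided by Assumption \ref{definition  disorder}.
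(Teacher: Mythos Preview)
You have correctly identified the three pillars---the origin-started quenched invariance principle (Proposition \ref{proposition:QFCLT_site_disorder}), the parabolic H\"older estimate (Proposition \ref{prop:holder cont}), and the heat kernel upper bound (Proposition \ref{proposition:heat_kernel_bounds})---and you rightly single out the passage to arbitrary starting points as the crux. However, the mechanism you propose for that passage does not work.

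Your scheme relies on the shift identity $X^{\boldsymbol{\alpha},x_N}_\cdot - x_N \stackrel{d}{=} X^{\tau_{x_N}\boldsymbol{\alpha},0}_\cdot$ and on arranging that $\tau_{x_N}\boldsymbol{\alpha} \in \mathfrak{B}_0$ for \emph{every} diverging sequence $(x_N)$. There are two problems. First, even if one had $\tau_{x_N}\boldsymbol{\alpha} \in \mathfrak{B}_0$ for all $N$, this would not suffice: the origin-QIP is a statement about the limit $N\to\infty$ of $\{X^{\boldsymbol{\omega},0}_{tN^2}/N\}$ for a \emph{fixed} environment $\boldsymbol{\omega}$, whereas you would be applying it to the $N$-dependent environment $\tau_{x_N}\boldsymbol{\alpha}$. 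Without a rate of convergence that is uniform over environments---which is not part of the definition of $\mathfrak{B}_0$---nothing follows. Second, a ``Borel--Cantelli-type argument applied to the indicator of $\mathfrak{B}_0$'' is vacuous: since $\mathcal P(\mathfrak{B}_0)=1$, that indicator is a.s.\ identically one and carries no quantitative information about convergence rates along $(x_N)$.

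The argument in \cite[Appendix A.2]{chen_quenched_2015}, which the paper reproduces, is of a different nature and uses the origin-QIP for the \emph{fixed} environment $\boldsymbol{\alpha}$ only. For a target $u\in\R^d$ and $\varepsilon>0$, one lets the walk started at $0$ run until the hitting time $\sigma^N_\varepsilon(u)$ of the rescaled ball $\mathcal B_\varepsilon(u)$. By Proposition \ref{proposition:QFCLT_site_disorder} and the strong Markov property, the \emph{averaged} quantity $\sum_{y/N \in \closure{\mathcal B_\varepsilon(u)}} S^{N,\boldsymbol{\alpha}}_{tN^2}G(\tfrac{y}{N})\, P^{\boldsymbol{\alpha}}_{\varepsilon,u,T}(\tfrac{y}{N})$ converges to its Brownian analogue. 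The H\"older estimate of Proposition \ref{prop:holder cont}---with constants independent of $N$---then ensures that $S^{N,\boldsymbol{\alpha}}_{tN^2}G(\tfrac{x_N}{N})$ differs from any value $S^{N,\boldsymbol{\alpha}}_{tN^2}G(\tfrac{y}{N})$ with $\tfrac{y}{N}\in\closure{\mathcal B_\varepsilon(u)}$ by at most $C\,\|G\|_\infty\,(2\varepsilon/\sqrt{t})^\gamma$, which is sent to zero by taking $\varepsilon\downarrow 0$ after $N\to\infty$. This yields the pointwise limit \eqref{eq:pointwise} without ever shifting the environment; in particular, the paper's $\mathfrak B$ is simply your $\mathfrak B_0$, not the countable intersection. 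Your stage-three outline is essentially correct: the paper upgrades to uniformity in space via Propositions \ref{prop:holder cont} and \ref{proposition:heat_kernel_bounds}, and obtains uniformity in $t\in[0,T]$ via the abstract semigroup approximation theorem \cite[Chapter 1, Theorem 6.1]{EthierKurtz} rather than a direct Arzel\`a--Ascoli argument.
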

The proof of the above theorem is deferred to Section \ref{section:proof_uniform_convergence_semigroups} below, and  goes through the proof of three intermediate results: the quenched invariance  principle for the random walk started from the origin (see Proposition \ref{proposition:QFCLT_site_disorder} in Section \ref{section: QIP origin} below), the space-time equicontinuity of the random walk semigroups (see Proposition \ref{prop:holder cont} in Section \ref{section:analytical} below) and  heat kernel upper bounds (see Proposition \ref{proposition:heat_kernel_bounds} in Section \ref{section:analytical} below).

As a consequence of Theorem \ref{theorem:uniform_convergence_semigroups} above and Lemma \ref{lemma:ergodic_theorem}, and recalling from there the  characterizations of the subsets   $\mathfrak B$ and $\mathfrak A \subseteq \{1,\ldots, \mathfrak c\}^{\Z^d}$, respectively, we obtain:
\begin{corollary}\label{corollary:l1}
For all environments $\boldsymbol \alpha \in \mathfrak A \cap \mathfrak B$, for all $T >0$ and $G \in \mathscr S(\R^d)$, \eqref{eq:l1sup_convergence0} holds, i.e.,		
	\begin{equation}\label{eq:l1sup_convergence}
		\sup_{t \in [0,T]}\frac{1}{N^d} \sum_{x \in \Z^d} \left|S^{N,\boldsymbol \alpha}_{tN^2}G(\tfrac{x}{N}) - \mathcal S^\varSigma_t G(\tfrac{x}{N})\right|\alpha_x \underset{N\to \infty}\longrightarrow 0\ .	
	\end{equation}
\end{corollary}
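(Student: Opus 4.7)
The plan is to combine the uniform convergence from Theorem \ref{theorem:uniform_convergence_semigroups} with tail estimates that exploit the Schwartz decay of $G$ (for the Brownian semigroup) and the Gaussian heat-kernel upper bounds of Proposition \ref{proposition:heat_kernel_bounds} (for the random-walk semigroup). Fix $G \in \mathscr S(\R^d)$, $T > 0$ and $\boldsymbol\alpha \in \mathfrak A \cap \mathfrak B$; for each $R > 0$ I would split
\begin{align*}
\sup_{t \in [0,T]}\frac{1}{N^d} \sum_{x \in \Z^d} \left|S^{N,\boldsymbol \alpha}_{tN^2}G(\tfrac{x}{N}) - \mathcal S^\varSigma_t G(\tfrac{x}{N})\right|\alpha_x \,\leq\, \mathrm I_N(R) + \mathrm J_N(R),
\end{align*}
where $\mathrm I_N(R)$ collects the lattice sites with $|x|\leq NR$ and $\mathrm J_N(R)$ the complementary region.

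For the compact part, since $\#\{x \in \Z^d: |x|\leq NR\} \leq C R^d N^d$ and $\alpha_x \leq \mathfrak c$, Theorem \ref{theorem:uniform_convergence_semigroups} directly gives
\begin{equation*}
\mathrm I_N(R) \,\leq\, \mathfrak c\, C R^d\, \sup_{t \in [0,T]}\sup_{x \in \Z^d}\left|S^{N,\boldsymbol\alpha}_{tN^2}G(\tfrac{x}{N}) - \mathcal S^\varSigma_t G(\tfrac{x}{N})\right|\,\underset{N\to\infty}{\longrightarrow}\, 0
\end{equation*}
for any fixed $R$. For $\mathrm J_N(R)$ I would then separate the Brownian and random-walk contributions by the triangle inequality. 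The Schwartz nature of $G$ yields, via standard Gaussian convolution estimates, a dominating function $\Phi \in L^1(\R^d) \cap \mathcal C_0(\R^d)$ with $\sup_{t \in [0,T]}|\mathcal S^\varSigma_t G(u)| \leq \Phi(u)$; Lemma \ref{lemma:ergodic_theorem} on $\mathfrak A$, applied to a smooth cutoff of $\Phi$, then forces
\begin{equation*}
\limsup_{N\to\infty}\, \frac{1}{N^d}\sum_{|x|>NR}\Phi(\tfrac{x}{N})\,\alpha_x \,\leq\, \E_{\mathcal P}[\alpha_0]\int_{|u|>R}\Phi(u)\,\dd u \,\underset{R\to\infty}{\longrightarrow}\, 0.
\end{equation*}

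The random-walk tail is the crux. Starting from $|S^{N,\boldsymbol\alpha}_{tN^2}G(\tfrac{x}{N})| \leq \sum_{y\in\Z^d} p^{\boldsymbol\alpha}_{tN^2}(x,y)|G(\tfrac{y}{N})|$, I would split the inner sum at $|y|=NR/2$. The $|y|>NR/2$ piece, after dropping the restriction $|x|>NR$ and using the reversibility identity $\sum_{x}\alpha_x p^{\boldsymbol\alpha}_s(x,y) = \alpha_y$, collapses to $\frac{1}{N^d}\sum_{|y|>NR/2}|G(\tfrac{y}{N})|\alpha_y$, which by Lemma \ref{lemma:ergodic_theorem} tends to $\E_{\mathcal P}[\alpha_0]\int_{|u|>R/2}|G(u)|\,\dd u$ and therefore vanishes as $R\to\infty$. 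The $|y|\leq NR/2$ piece, after swapping summation order and applying detailed balance $\alpha_x p^{\boldsymbol\alpha}_s(x,y) = \alpha_y p^{\boldsymbol\alpha}_s(y,x)$, is bounded by
\begin{equation*}
\mathfrak c\,\cdot\, \sup_{\substack{|y|\leq NR/2\\ t \in [0,T]}} \sum_{|x|>NR} p^{\boldsymbol\alpha}_{tN^2}(y,x)\,\cdot\,\frac{1}{N^d}\sum_{y \in \Z^d}|G(\tfrac{y}{N})|,
\end{equation*}
and the middle supremum is of order $\exp(-cR^2/T)$ uniformly in $N$ by the Gaussian heat-kernel upper bound of Proposition \ref{proposition:heat_kernel_bounds}, since in that regime $|x-y|\geq NR/2$ while $tN^2 \leq TN^2$.

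The principal difficulty is precisely this random-walk tail: the full quantity $\frac{1}{N^d}\sum_x \alpha_x|S^{N,\boldsymbol\alpha}_{tN^2}G(\tfrac{x}{N})|$ is of order $\int|G|$ by reversibility alone and does \emph{not} shrink with $R$, so smallness has to be extracted purely from the geometric separation between $\{|x|>NR\}$ and $\{|y|\leq NR/2\}$, which is exactly what the Gaussian heat-kernel upper bound delivers. Once $\mathrm J_N(R)$ is made uniformly (in $N$) smaller than any prescribed $\epsilon > 0$ by choosing $R$ large, sending $N \to \infty$ with this $R$ fixed drives $\mathrm I_N(R)$ to $0$ and concludes the proof.
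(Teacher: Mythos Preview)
Your argument is correct, but it follows a genuinely different route from the paper's. The paper decomposes $G=G^+-G^-$ into its positive and negative parts and then uses the elementary identity $|c|=c+2\max\{-c,0\}$: the ``signed'' part $\frac{1}{N^d}\sum_x\alpha_x\bigl(S^{N,\boldsymbol\alpha}_{tN^2}G^\pm-\mathcal S^\varSigma_tG^\pm\bigr)(\tfrac{x}{N})$ telescopes via reversibility \eqref{eq:detailed_balance_probabilities} to a difference of $t$-independent quantities handled by Lemma~\ref{lemma:ergodic_theorem}, while the ``negative-part'' term is bounded pointwise by $\mathcal S^\varSigma_tG^\pm$ (because $G^\pm\ge 0$ forces $S^{N,\boldsymbol\alpha}_{tN^2}G^\pm\ge 0$), so that the only tail estimate ever needed is for the \emph{Brownian} semigroup. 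In particular, the paper never invokes Proposition~\ref{proposition:heat_kernel_bounds} in this proof. Your approach instead attacks the random-walk tail head-on with the heat-kernel bound; this is more direct and avoids the positive/negative splitting, at the cost of using an extra analytical ingredient. One small imprecision: Proposition~\ref{proposition:heat_kernel_bounds} gives an exponential (not Gaussian) off-diagonal decay $e^{-|x-y|/(1\vee\sqrt t)}$, so the tail probability you isolate is of order $e^{-cR/\sqrt T}$ rather than $e^{-cR^2/T}$; either way it vanishes as $R\to\infty$ uniformly in $N$, so your conclusion stands.
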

The proof of the above corollary -- whose main ideas are adapted from \cite[Proposition 5.3]{redig_symmetric_2020} --	 is postponed to Appendix \ref{appendix:ergodic} below.

\subsection{Quenched invariance principle for $\RWa$  starting from the origin}\label{section: QIP origin} For all realizations $\boldsymbol \alpha$ of the environment, 
the random walk $\RWa$, $X^{\boldsymbol \alpha,0} = \{X^{\boldsymbol \alpha,0}_t,\, t \geq 0 \}$ -- with generator given in \eqref{equation generator random walk in disorder} and with the origin of $\Z^d$ as starting position -- can be viewed as a random time change of a specific RCM, i.e., the continuous-time  random walk $X^{\boldsymbol \omega,0}=\{X^{\boldsymbol \omega,0}_t,\, t \geq 0\}$, abbreviated by $\RWo$ and with law $P^{\boldsymbol \omega}$ (and corresponding expectation $E^{\boldsymbol \omega}$), starting  from the origin of $\Z^d$ and evolving on $\Z^d$ according to the generator given by
\begin{equation}\label{equation generator random conductance}
\Ao f(x):=\sum_{\substack{y\in \Z^d\\|y-x|=1}}\omega_{xy}\left(f(y)-f(x)\right)\ ,\qquad x \in \Z^d\ ,
\end{equation}
where $f:\Z^d\to \R$ is a bounded  function and
\begin{align}\label{equation conductances connected to alpha}
\omega_{xy}\coloneqq\alpha_x\alpha_y\ ,\qquad \forall\, x, y \in \Z^d\ \text{such that}\ |x-y|=1 \ .	
\end{align}
Indeed, when in  position $x \in \Z^d$, the walk $X^{\boldsymbol \alpha,0}$ spends there an exponential holding time with parameter $\lx$ given by  
\begin{align}\label{equation holdin time alpha}
\lx=\sum_{\substack{y\in \Z^d\\ |y-x|=1}}\alpha_y\ ,
\end{align} and then jumps to a neighbor of $x$, say $z$, with probability $r_{\boldsymbol \alpha}(x,z)$ given by 
\begin{align}\label{equation jump probability}
r^{\boldsymbol \alpha}(x,z)=\frac{\alpha_z}{\lx}\ .
\end{align}
The corresponding quantities \eqref{equation holdin time alpha} and \eqref{equation jump probability} for the walk $X^{\boldsymbol \omega,0}$  are given, respectively, by 
\begin{equation}\label{equation holding time omega}
\lox=\sum_{\substack{y\in \Z^d\\ |y-x|=1}}\alpha_x\alpha_y=\alpha_x \lx \ ,
\end{equation}
and
\begin{align}\label{equation transition probabilities random conductance}
r^{\boldsymbol \omega}(x,z)=\frac{\alpha_x\alpha_z}{\alpha_x\lx}=r^{\boldsymbol \alpha}(x,z)\ .
\end{align}
Hence, if we define the random time change $\{R(t), t \geq 0\}$ by 
\begin{equation}\label{equation random time change A}
R(t)\coloneqq \int_0^t \alpha_{X^{\boldsymbol \omega,0}_s}\, \dd s\ ,
\end{equation} then, in law, 
\begin{align*}
\{X^{\boldsymbol \omega,0}_{R^{-1}(t)},\, t \geq 0\} =\{X^{\boldsymbol \alpha,0}_t,\, t \geq 0\}\ ,
\end{align*}  
where $R^{-1}$ is the inverse of the continuous piecewise linear and increasing bijection $R:[0,\infty)\to[0,\infty)$.  

\

In what follows, we let $\Omega$ denote the space of all  conductances $\boldsymbol \omega$ with $\omega_{xy}\in\{1,...,\mathfrak c^2\}$ endowed with the Borel $\sigma$-algebra induced by the discrete topology. Recall the definition of $\mathcal P$ in Assumption \ref{definition  disorder}. We then let $\mathcal Q$ be the probability measure on  $\Omega$ for which, for all measurable $\mathcal U\subseteq \Omega$, 
\begin{align}\label{equation QP}
\mathcal Q(\mathcal U)=\mathcal P\left(\boldsymbol \alpha \in  \{1,\ldots, \mathfrak c\}^{\Z^d}  : \begin{array}{c}\exists\, \boldsymbol \omega \in \mathcal U\ \text{s.t.}\  \omega_{xy}=\alpha_x\alpha_y\\ \forall\  x,y\in \Z^d\ \text{with}\ |x-y|=1\end{array}\right)\ .
\end{align}
We remark that the measure $\mathcal Q$ inherits the invariance and ergodicity under space translations from $\mathcal P$ (see Assumption \ref{definition  disorder}). 
We then have the following result, taken  from \cite[Theorem 1.1 and Remark 1.3]{SidoraviciusSznitman}. 
\begin{theorem}[Quenched invariance principle for $\RWo$ started from the origin \cite{SidoraviciusSznitman}]\label{theorem QIP for rundom conductances }
	The quenched  invariance principle holds for the random walk $\RWo$ started from the origin with a limiting non-degenerate covariance matrix $\varLambda$, i.e., for $\mathcal Q$-a.e.\ environment $\boldsymbol \omega$ and for all $T > 0$, the following convergence in law in the Skorokhod space $\mathcal D([0,T],\R^d)$ holds
	\begin{align*}
	\left\{\frac{X^{\boldsymbol \omega,0}_{tN^2}}{N},\,  t \in [0,T] \right\}\ \underset{N\to \infty}\Longrightarrow\ \left\{B^{\varLambda}_t,\, t\in [0,T] \right\}\ ,
	\end{align*}
 where the r.h.s.\ is a Brownian motion on $\R^d$ starting at the origin with a non-degenerate covariance matrix $\varLambda \in \R^{d\times d}$ independent of the  realization of the environment $\boldsymbol \omega$.		
\end{theorem}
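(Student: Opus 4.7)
The plan is to follow the corrector (harmonic coordinates) method, which is the standard route to quenched invariance principles for reversible random walks in stationary ergodic environments. Since here the conductances are uniformly bounded away from $0$ and $\infty$ by Assumption \ref{definition disorder} and \eqref{equation conductances connected to alpha}, several steps that require delicate arguments in the percolation setting of \cite{SidoraviciusSznitman} become comparatively soft.

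The first step is to pass to the \emph{environment seen from the walker}. Set $\bar{\boldsymbol\omega}_t := \tau_{X^{\boldsymbol\omega,0}_t}\boldsymbol\omega$, which is a Markov process on $\Omega$ reversible with respect to the probability measure $\mathcal Q_0(\mathrm d\boldsymbol\omega) := Z^{-1}\mu(\boldsymbol\omega,0)\,\mathcal Q(\mathrm d\boldsymbol\omega)$, where $\mu(\boldsymbol\omega,0) := \sum_{|y|=1}\omega_{0y}$ and $Z = \mathbb E_{\mathcal Q}[\mu(\cdot,0)]$. Since $\mathcal Q$ is stationary and ergodic under translations and the conductances are bounded and bounded below, $\mathcal Q_0$ is mutually absolutely continuous with $\mathcal Q$, and the process $\bar{\boldsymbol\omega}_t$ is ergodic on $(\Omega,\mathcal Q_0)$ by a standard argument (ergodicity of translations transfers through the Dirichlet form).

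Next, I would construct the \emph{corrector} $\chi:\Omega\times\Z^d\to\R^d$ as a cocycle with $\chi(\boldsymbol\omega,0)=0$ such that, for each unit vector $e_k$, the map $\Phi_k(\boldsymbol\omega,x) := x\cdot e_k + \chi_k(\boldsymbol\omega,x)$ is $A^{\boldsymbol\omega}$-harmonic on $\Z^d$. Equivalently, solve in the Hilbert space of square-integrable shift-covariant cocycles the projection of the position function onto the space of potential forms, following Kozlov/Kipnis--Varadhan. Uniform ellipticity guarantees the existence of the corrector in $L^2(\mathcal Q_0)$. By construction,
\begin{equation*}
M_t := X^{\boldsymbol\omega,0}_t + \chi(\boldsymbol\omega,X^{\boldsymbol\omega,0}_t)
\end{equation*}
is a $P^{\boldsymbol\omega}$-martingale with stationary increments when viewed along $\bar{\boldsymbol\omega}_t$; the ergodic theorem for its predictable quadratic variation, combined with the Lindeberg--Feller martingale invariance principle of Helland, then yields, for $\mathcal Q$-a.e.\ $\boldsymbol\omega$, that $\{M_{tN^2}/N,\,t\in[0,T]\}$ converges in law in $\mathcal D([0,T],\R^d)$ to a Brownian motion with covariance $\varLambda$ given by a Dirichlet-form variational formula. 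Uniform ellipticity forces $\varLambda \succeq cI$ for some $c>0$, so $\varLambda$ is non-degenerate, and the limit is independent of $\boldsymbol\omega$ by ergodicity.

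The remaining step, and the one I expect to be the technical bottleneck, is the \emph{quenched sublinearity of the corrector}:
\begin{equation*}
\lim_{N\to\infty}\ \max_{|x|\le RN}\frac{|\chi(\boldsymbol\omega,x)|}{N} = 0\qquad\text{for $\mathcal Q$-a.e.\ $\boldsymbol\omega$ and every $R>0$.}
\end{equation*}
Sublinearity along the trajectory follows routinely from the $L^2(\mathcal Q_0)$-ergodic theorem applied to the cocycle increments, but upgrading this to uniform sublinearity on cubes requires a supremum-to-average control. In the uniformly elliptic setting, one obtains this by combining the pointwise sublinearity on the lattice (from the shift-covariant ergodic theorem applied to the corrector's gradients) with a standard on-diagonal Nash/Carne--Varopoulos heat-kernel upper bound (which in this paper is anyway provided by Proposition \ref{proposition:heat_kernel_bounds}) and a maximal inequality; this is the route taken in \cite{SidoraviciusSznitman} and is what replaces the more delicate isoperimetric arguments needed for the percolation cluster. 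Once quenched sublinearity is in hand, one concludes that $X^{\boldsymbol\omega,0}_{tN^2}/N$ and $M_{tN^2}/N$ have the same limit, completing the quenched invariance principle with covariance $\varLambda$.
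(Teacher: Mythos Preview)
The paper does not prove this theorem at all: it is quoted verbatim as a result from \cite{SidoraviciusSznitman} (see the sentence immediately preceding the statement, ``We then have the following result, taken from \cite[Theorem 1.1 and Remark 1.3]{SidoraviciusSznitman}''), and the subsequent remark simply notes that \cite{SidoraviciusSznitman} and \cite{mathieu_quenched_2008} were the first to establish it in all dimensions. So there is nothing in the paper to compare your argument against; the authors treat Theorem \ref{theorem QIP for rundom conductances } as a black box and use it only as input to Proposition \ref{proposition:QFCLT_site_disorder}.

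That said, your sketch is a faithful outline of the corrector approach that \cite{SidoraviciusSznitman} (and essentially all subsequent works on the RCM quenched invariance principle) follow, and it is correct in spirit. One small inaccuracy: since $A^{\boldsymbol\omega}$ in \eqref{equation generator random conductance} is the \emph{variable-speed} random walk generator, the environment process $\bar{\boldsymbol\omega}_t$ is reversible with respect to $\mathcal Q$ itself, not with respect to the $\mu$-weighted measure $\mathcal Q_0$ you introduce (the weighting by $\mu$ arises for the constant-speed or discrete-time walk). In the uniformly elliptic regime this is harmless because $\mathcal Q$ and $\mathcal Q_0$ are equivalent with bounded Radon--Nikodym derivative, but it is worth getting right if you intend to flesh this out.
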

We remark that   \cite{SidoraviciusSznitman} and \cite{mathieu_quenched_2008} were the  first two works in which the quenched invariance principle for RCM with ergodic and uniformly elliptic conductances was proven for any dimension $d \geq 1$. 	We refer to, e.g.,  \cite{BergerBiskup, mathieu_quenched_2007, biskup_invariance_2019, andres_invariance2015, bella_quenched_2019} as a partial list for further results in which the uniform ellipticity assumption on the conductances has been replaced by more general conditions on the conductance moments.

In order to get the quenched  invariance principle for the random walk $\RWa$,   we only need  to check that the random time change defined in \eqref{equation random time change A} properly rescales. In the proof of the following result,  we follow closely Section 6.2 in \cite{andres_invariance2013}. 

\begin{proposition}[{Quenched invariance principle for $\RWa$ started from the origin}]\label{proposition:QFCLT_site_disorder}
	The quenched  invariance principle holds for  the random walk $\RWa$ started from the origin with a limiting non-degenerate covariance matrix $\varSigma:=\frac{1}{\E_{\mathcal P}\left[\alpha_0\right]}\varLambda$. Here $\varLambda$ is the covariance matrix appearing in Theorem \ref{theorem QIP for rundom conductances }. In particular, the covariance matrix $\varSigma$ does not depend on the specific realization of the environment $\boldsymbol \alpha$, but only on the law $\mathcal P$.
\end{proposition}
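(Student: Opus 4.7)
The plan is to derive the proposition from Theorem \ref{theorem QIP for rundom conductances } via the random time change representation
\begin{equation*}
\{X^{\boldsymbol \alpha,0}_t,\, t \geq 0\} \stackrel{\text{law}}{=} \{X^{\boldsymbol \omega,0}_{R^{-1}(t)},\, t \geq 0\}
\end{equation*}
recalled above, following closely \cite[Section 6.2]{andres_invariance2013}. The core step is to show that the random time change $R$ rescales to a deterministic linear function, after which Brownian scaling delivers the announced covariance matrix.

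Since $\omega_{xy}=\alpha_x\alpha_y$ is symmetric, the counting measure on $\Z^d$ is reversible for $\RWo$, which implies that the environment seen from the walker $\{\tau_{X^{\boldsymbol \omega,0}_t}\boldsymbol \omega,\, t \geq 0\}$ is a Markov process with invariant measure $\mathcal Q$ itself. By \eqref{equation QP} the probability $\mathcal Q$ inherits stationarity and ergodicity under translations from $\mathcal P$, and, combined with the uniform ellipticity \eqref{equation uniform ellipticity} -- which guarantees irreducibility of the walk on each translation-ergodic component -- this yields ergodicity of the environment process from the point of view of the walker. Birkhoff's ergodic theorem, applied to the bounded observable $\boldsymbol \omega \mapsto \alpha_0$, then yields, for $\mathcal Q$-a.e.\ $\boldsymbol \omega$ and $P^{\boldsymbol \omega}$-a.s.,
\begin{equation*}
\frac{R(T)}{T}\, =\, \frac{1}{T}\int_0^T \alpha_{X^{\boldsymbol \omega,0}_s}\, \dd s\ \underset{T\to\infty}\longrightarrow\ \E_{\mathcal Q}[\alpha_0]\, =\, \E_{\mathcal P}[\alpha_0]\ .
\end{equation*}
Since $R$ is continuous, strictly increasing and piecewise linear with slopes in $[1,\mathfrak c]$ by \eqref{equation uniform ellipticity}, the convergence transfers to its inverse to give $R^{-1}(tN^2)/N^2 \to t/\E_{\mathcal P}[\alpha_0]$, uniformly in $t$ on compact intervals.

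Combining this with Theorem \ref{theorem QIP for rundom conductances } for $\mathcal Q$-a.e.\ $\boldsymbol \omega$ -- equivalently for $\mathcal P$-a.e.\ $\boldsymbol \alpha$ by \eqref{equation QP} -- and invoking the continuity of composition with strictly increasing continuous time changes in the Skorokhod topology, we obtain, for all $T>0$,
\begin{equation*}
\left\{\tfrac{X^{\boldsymbol \alpha,0}_{tN^2}}{N},\, t \in [0,T]\right\}\, \stackrel{\text{law}}{=}\, \left\{\tfrac{X^{\boldsymbol \omega,0}_{R^{-1}(tN^2)}}{N},\, t \in [0,T]\right\}\ \underset{N\to\infty}\Longrightarrow\ \left\{B^\varLambda_{t/\E_{\mathcal P}[\alpha_0]},\, t \in [0,T]\right\}\ .
\end{equation*}
By Brownian scaling the limit is a Brownian motion with covariance matrix $\varSigma:=\varLambda/\E_{\mathcal P}[\alpha_0]$, which by Theorem \ref{theorem QIP for rundom conductances } is non-degenerate and independent of the realization of the environment.

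The main obstacle I anticipate is justifying the ergodicity of the environment process seen from the walker in the required sense: this is a classical fact for uniformly elliptic conductance models (implicit in \cite{SidoraviciusSznitman, andres_invariance2013}), but requires carefully combining the spatial ergodicity of $\mathcal Q$ with irreducibility of $\RWo$. Beyond that point, the derivation is mechanical: the time change is continuous and essentially Lipschitz, and the functional delivery of the limit through composition in Skorokhod space is standard once the almost sure convergences of $X^{\boldsymbol \omega,0}_{\cdot N^2}/N$ and $R^{-1}(\cdot N^2)/N^2$ are secured (which may be arranged via Skorokhod representation if preferred).
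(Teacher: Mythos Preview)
Your proposal is correct and follows essentially the same approach as the paper: both use the random time change to $\RWo$, invoke Theorem \ref{theorem QIP for rundom conductances }, establish $R^{-1}(t)/t\to 1/\E_{\mathcal P}[\alpha_0]$ via the ergodic theorem for the environment seen from the walker, and then push the time change through the invariance principle following \cite[Section 6.2]{andres_invariance2013}. The only cosmetic difference is that the paper carries out the final step by an explicit increment estimate (bounding $P^{\boldsymbol\omega}(|X^{\boldsymbol\omega,0}_{R^{-1}(tN^2)}-X^{\boldsymbol\omega,0}_{tN^2/\E_{\mathcal P}[\alpha_0]}|/N>\varepsilon)$ via the modulus of continuity coming from tightness of $\{X^{\boldsymbol\omega,0}_{\cdot N^2}/N\}_N$), whereas you package it as a continuous-mapping/composition argument; for the ergodicity of the environment process you flag as the main obstacle, the paper simply cites \cite[Lemma 4.3]{de_masi_invariance_1989}.
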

\begin{remark}\label{remark:B}
	For later purposes, we define 
	\begin{equation}\label{eq:B}
		\mathfrak B\coloneqq \left\{\boldsymbol \alpha \in \{1,\ldots, \mathfrak c\}^{\Z^d}: \text{the invariance principle for $\RWa$ in Proposition \ref{proposition:QFCLT_site_disorder} holds}\right\}\ .	
	\end{equation}
\end{remark}

\begin{proof}
	Consider the random walk $X^{\boldsymbol \omega,0}$ starting from the origin and the corresponding process of the environment $\boldsymbol \alpha$ as seen from the random walk $X^{\boldsymbol \omega,0}$, i.e., 
	\begin{equation}\label{eq:environment_process}
		\left\{\tau_{X^{\boldsymbol \omega,0}_t}\,	\boldsymbol \alpha,\ t \geq 0\right\} \subseteq \{1,\ldots,\mathfrak c\}^{\Z^d}\ .	
	\end{equation}
By our Assumption \ref{definition  disorder} and \cite[Lemma 4.3]{de_masi_invariance_1989},  $\mathcal P$ is an invariant (actually reversible) and ergodic law for the process in \eqref{eq:environment_process}.
Hence, recalling the random time change $\{R(t),\, t \geq 0 \}$ defined in \eqref{equation random time change A},  Birkhoff's ergodic theorem for the process in \eqref{eq:environment_process} yields, for $\mathcal P$-a.e.\ environment $\boldsymbol \alpha$, 
	\begin{equation}\label{equation ergodic theorem time change 1}
	\lim_{t\to \infty} \frac{R(t)}{t} =\E_{\mathcal P}\left[\alpha_0\right]\ .
	\end{equation}
	Because $R: [0,\infty) \to [0,\infty)$ is a strictly increasing bijection, \eqref{equation ergodic theorem time change 1} is, in turn,  equivalent to 
	\begin{equation}\label{equation ergodic theorem time change}
	\lim_{t\to \infty} \frac{R^{-1}(t)}{t}=\frac{1}{\E_{\mathcal P}\left[\alpha_0\right]}\ .
	\end{equation}
	The conclusion of the theorem follows from the argument in Section 6.2 in \cite{andres_invariance2013} as soon as we prove that, for all $t>0$ and $\epsilon>0$,   we have, for $\mathcal P$-a.e.\ $\boldsymbol\alpha$ (recall from \eqref{equation conductances connected to alpha} that $\boldsymbol \omega=\boldsymbol \omega(\boldsymbol \alpha)$ with		 $\omega_{xy}=\alpha_x\alpha_y$ for all $x,  y \in \Z^d$ with $|x-y|=1$),
	\begin{equation} \label{equation converges to 0 of increments with random time change}
	\limsup_{N\to\infty}  P^{\boldsymbol \omega} \left( \left|\frac{X^{\boldsymbol \omega,0}_{R^{-1}(tN^2)}-X^{\boldsymbol \omega,0}_{\frac{1}{\E_{\mathcal P}\left[\alpha_0\right]}tN^2}}{N}\right|>\epsilon  \right)=0\ ,
	\end{equation}
	where  $P^{\boldsymbol \omega}$ denotes the law of $X^{\boldsymbol \omega}$.
		
	We are, thus, left with the proof of \eqref{equation converges to 0 of increments with random time change}. Fix $t>0$ and $\epsilon > 0$. Then, for all $\delta>0$, we have
	\begin{align}\label{equation x}
		\nonumber
	&P^{\boldsymbol \omega} \left( \left|\frac{X^{\boldsymbol \omega,0}_{R^{-1}(tN^2)}-X^{\boldsymbol \omega,0}_{\frac{1}{\E_{\mathcal P}\left[\alpha_0\right]}tN^2}}{N}\right|>\epsilon  \right) \\
	\nonumber
	&\quad \leq P^{\boldsymbol \omega} \left( \left|\frac{X^{\boldsymbol \omega,0}_{R^{-1}(tN^2)}-X^{\boldsymbol \omega,0}_{\frac{1}{\E_{\mathcal P}\left[\alpha_0\right]}tN^2}}{N}\right|>\epsilon, \left|  \frac{R^{-1}(tN^2)}{N^2}- \frac{t}{\E_{\mathcal P}\left[\alpha_0\right]}\right | \leq \delta  \right) \\
	&\quad +P^{\boldsymbol \omega} \left( \left|  \frac{R^{-1}(tN^2)}{N^2}- \frac{t}{\E_{\mathcal P}\left[\alpha_0\right]}\right | > \delta   \right).
	\end{align}
	The second term on the r.h.s.\ of \eqref{equation x}  goes to zero as $N\to \infty$ by \eqref{equation ergodic theorem time change}, while the first term is bounded above by  
	\begin{equation}\label{eq:upper_bound}
	 P^{\boldsymbol \omega} \left( \sup_{\substack{|s-r|\leq \delta\\r,s\leq T}} \left|\frac{X^{\boldsymbol \omega,0}_{s N^2}-X^{\boldsymbol \omega,0}_{r N^2}}{N}\right|>\epsilon \right)\ .	
	 \end{equation}
 for some positive $T=T(t,\mathfrak c)$ independent of $N \in \N$. Due to Theorem \ref{theorem QIP for rundom conductances } and the continuity of the trajectories of the limit process, the expression in \eqref{eq:upper_bound} vanishes as $N\to \infty$ and  $\delta \to 0$, i.e.,
 \begin{equation}\label{eq:upper_bound1}
 \lim_{\delta \downarrow 0}\limsup_{N\to \infty}	P^{\boldsymbol \omega} \left( \sup_{\substack{|s-r|\leq \delta\\r,s\leq T}} \left|\frac{X^{\boldsymbol \omega,0}_{s N^2}-X^{\boldsymbol \omega,0}_{r N^2}}{N}\right|>\epsilon \right)=0\ .	
 \end{equation}  Indeed, let $\tilde X^{\boldsymbol \omega,0}=\{\tilde X^{\boldsymbol \omega,0}_t,\, t \in [0,T]\}$ denote the piecewise linear interpolation of the jump process $X^{\boldsymbol \omega,0}$. Then, due to the continuity of the trajectories of the limiting Brownian motion in Theorem \ref{theorem QIP for rundom conductances }, the same theorem holds with $\mathcal C([0,T],\R^d)$ (the Banach space of continuous functions from $[0,T]$ to $\R^d$ endowed with the supremum norm; see, e.g., \cite[Chapter 8]{billingsley_convergence_1968}) and $\tilde X^{\boldsymbol{\omega},0}$ in place of $\mathcal D([0,T],\R^d)$ and $X^{\boldsymbol \omega,0}$, respectively.  By Prohorov's theorem (see, e.g., \cite[Theorem 6.2]{billingsley_convergence_1968}) and the characterization of tightness of probability measures  on $\mathcal C([0,T],\R^d)$ (see, e.g., \cite[Theorem 8.2]{billingsley_convergence_1968}), we have, for all $\epsilon > 0$, 
 \begin{equation}\label{eq:upper_bound2}
 \lim_{\delta \downarrow 0}\limsup_{N\to \infty}	P^{\boldsymbol \omega} \left( \sup_{\substack{|s-r|\leq \delta\\r,s\leq T}} \left|\frac{\tilde X^{\boldsymbol \omega,0}_{s N^2}-\tilde X^{\boldsymbol \omega,0}_{r N^2}}{N}\right|>\epsilon \right)=0\ .	
 \end{equation}
For all $\delta > 0$ and $\epsilon>0$,  $X^{\boldsymbol \omega,0}$ being a nearest-neighbor random walk implies that
\begin{equation*}
	\limsup_{N\to \infty}	P^{\boldsymbol \omega} \left( \sup_{\substack{|s-r|\leq \delta\\r,s\leq T}} \left|\frac{\tilde X^{\boldsymbol \omega,0}_{s N^2}-\tilde X^{\boldsymbol \omega,0}_{r N^2}}{N}\right|>\epsilon \right) = \limsup_{N\to \infty}	P^{\boldsymbol \omega} \left( \sup_{\substack{|s-r|\leq \delta\\r,s\leq T}} \left|\frac{ X^{\boldsymbol \omega,0}_{s N^2}- X^{\boldsymbol \omega,0}_{r N^2}}{N}\right|>\epsilon \right)
\end{equation*}
holds true.
This and \eqref{eq:upper_bound2} yield \eqref{eq:upper_bound1}, thus,  concluding the proof of the proposition.
\end{proof}

\subsection{H\"older equicontinuity of the semigroup  and heat kernel upper bounds for $\RWa$}\label{section:analytical}
In this section, $\boldsymbol \alpha$ is an arbitrary realization of the environment.
We start by proving   that the family of semigroups corresponding to the diffusively rescaled random walks $\RWa$ are H\"{o}lder equicontinuous in both space and time variables. It is well-known (see, e.g., \cite{stroock_markov_1997, delmotte_parabolic_1999} as references in the context of graphs) that H\"older equicontinuity of solutions to parabolic partial differential equations may be derived from parabolic Harnack inequalities (see, e.g., \cite[Definition 1.6]{delmotte_parabolic_1999}). In our context, for all bounded functions $f: \Z^d \to \R $, the parabolic partial difference equation  that $S^{\boldsymbol \alpha}_\cdot f(\cdot)= \left\{S^{\boldsymbol \alpha}_t f(x),\  t \geq 0,\	 x \in \Z^d \right\}$ solves reads as follows:
\begin{equation}\label{eq: parabolic equation alpha}
\alpha_x\frac{\partial}{\partial t}\psi(t,x)=\sum_y \alpha_x\alpha_y(\psi(t,y)-\psi(t,x))\ ,\qquad t \geq 0\ ,\quad x \in \Z^d\ ,
\end{equation}
with initial condition $\psi(0,\cdot)=f$.
By applying the Moser iteration scheme as in \cite[Section 2]{delmotte_parabolic_1999}, we recover the parabolic Harnack inequality (\cite[Theorem 2.1]{delmotte_parabolic_1999}) for positive solutions of \eqref{eq: parabolic equation alpha}.  We note that $\boldsymbol \alpha$, viewed as a $\sigma$-finite measure on $\Z^d$ and due to the assumption of uniform ellipticity, plays the role of  speed measure   (cf.\ $m$ in \cite[Section 1.1]{delmotte_parabolic_1999}; see also \cite[Remark 1.5]{andres_harnack_2016} for an analogous discussion).

 In conclusion, by applying the aforementioned parabolic Harnack inequality as, e.g.,  in \cite[Proposition 4.1]{delmotte_parabolic_1999} and \cite[Theorem 1.31]{stroock_markov_1997}, we obtain the following result:

\begin{proposition}[{H\"older equicontinuity of semigroups}]\label{prop:holder cont}
	There exists $C>0$ and $\gamma>0$ such that, for all realizations $\boldsymbol \alpha$ of the environment, for all $N\in \N$ and for all  $G \in \mathcal C_0(\R^d)$, we have 
	\begin{equation}
	\left|S^{N,\boldsymbol \alpha}_{tN^2}G(\tfrac{x}{N})-S^{N,\boldsymbol \alpha}_{sN^2}G(\tfrac{y}{N})\right|\leq C\, \sup_{u\in \R^d}\left|G(u)\right| \left(    \frac{   \sqrt{|t-s|} \vee |\tfrac{x}{N}-\tfrac{y}{N}| }{\sqrt{t\wedge s}} \right)^\gamma
	\end{equation}
	for all $s, t > 0$ and $x, y \in \Z^d$.
\end{proposition}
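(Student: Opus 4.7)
The plan is to read \eqref{eq: parabolic equation alpha} as a discrete parabolic equation on the weighted graph $(\Z^d, \boldsymbol \alpha, \boldsymbol \omega)$, in which $\omega_{xy} = \alpha_x\alpha_y$ plays the role of the symmetric conductance and $\boldsymbol \alpha$ the role of the speed measure, and then to deduce the desired estimate from a parabolic Harnack inequality (PHI) for non-negative solutions of \eqref{eq: parabolic equation alpha}. By \eqref{equation uniform ellipticity} both weights are uniformly sandwiched between deterministic positive constants ($1 \leq \alpha_x \leq \mathfrak c$ and $1 \leq \omega_{xy} \leq \mathfrak c^2$), uniformly in the realization $\boldsymbol \alpha$, so that the $\boldsymbol \alpha$-volume of an Euclidean ball in $\Z^d$ is comparable, up to constants depending only on $d$ and $\mathfrak c$, to its cardinality. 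In particular, the volume doubling property and the weighted Poincar\'e inequality on Euclidean balls hold with constants depending only on $d$ and $\mathfrak c$.

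First, I would invoke the Moser iteration argument of \cite[Section 2]{delmotte_parabolic_1999} verbatim to obtain a PHI of the form stated in \cite[Theorem 2.1]{delmotte_parabolic_1999} for non-negative solutions of \eqref{eq: parabolic equation alpha}, with Harnack constant depending only on $d$ and $\mathfrak c$. Then, by the standard deduction of H\"older regularity from PHI, as in \cite[Proposition 4.1]{delmotte_parabolic_1999} or \cite[Theorem 1.31]{stroock_markov_1997}, one obtains constants $C,\gamma > 0$ (depending only on $d$ and $\mathfrak c$) such that every bounded solution $\psi$ of \eqref{eq: parabolic equation alpha} satisfies
\begin{equation*}
\left|\psi(t,x) - \psi(s,y)\right| \leq C\, \sup_{z \in \Z^d}\left|\psi(0,z)\right| \left(\frac{|x-y| \vee \sqrt{|t-s|}}{R}\right)^\gamma
\end{equation*}
whenever $(s,y)$ and $(t,x)$ lie in the inner half of a parabolic cylinder of radius $R$ on which the solution is defined.

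To conclude, I would apply this inequality to $\psi(t,x) = \Sa_t G(\tfrac{\cdot}{N})(x) = S^{N,\boldsymbol \alpha}_t G(\tfrac{x}{N})$, which is bounded by $\sup_u |G(u)|$, centering at the base point $(tN^2, x)$ a parabolic cylinder of radius $R \sim N\sqrt{t\wedge s}$; after replacing $(s,t)$ by $(sN^2, tN^2)$ and dividing through by $N$ in space, the ratio $(|x-y| \vee \sqrt{|tN^2 - sN^2|})/R$ becomes exactly $(|\tfrac{x}{N}-\tfrac{y}{N}| \vee \sqrt{|t-s|})/\sqrt{t\wedge s}$, yielding the stated bound. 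The main technical point, rather than a conceptual obstacle, is bookkeeping: one has to verify that the constants produced by the Moser iteration genuinely depend only on $d$ and $\mathfrak c$, and not on $\boldsymbol \alpha$ or on the base point. Thanks to the two-sided bound \eqref{equation uniform ellipticity}, the required doubling and Poincar\'e constants reduce to their unweighted counterparts on $\Z^d$ up to a multiplicative factor controlled by $\mathfrak c$, so this uniformity is essentially automatic; the diffusive rescaling $(t,x)\mapsto (N^2 t, Nx)$ is consistent with \eqref{eq: parabolic equation alpha} at the level of the rescaled semigroup, which is why no extra factors of $N$ appear in the final estimate.
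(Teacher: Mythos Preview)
Your proposal is correct and follows essentially the same route as the paper: invoke the Moser iteration of \cite[Section 2]{delmotte_parabolic_1999} on the weighted graph $(\Z^d,\boldsymbol\alpha,\boldsymbol\omega)$ with $\omega_{xy}=\alpha_x\alpha_y$ and speed measure $\boldsymbol\alpha$ to obtain the parabolic Harnack inequality \cite[Theorem 2.1]{delmotte_parabolic_1999}, and then deduce the H\"older estimate via \cite[Proposition 4.1]{delmotte_parabolic_1999} and \cite[Theorem 1.31]{stroock_markov_1997}. The paper's argument is in fact slightly more terse than yours, omitting the explicit rescaling bookkeeping you spell out at the end.
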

%
The second result is an upper bound for the heat kernel of the random walk $\RWa$, i.e., 	\begin{align}\label{eq: heat kernel}
	q^{\boldsymbol \alpha}_t(x,y):=\frac{1}{\alpha_y}P^{\boldsymbol \alpha}(X^{\boldsymbol \alpha,x}_t=y)\equiv \frac{p_t^{\boldsymbol \alpha}(x,y)}{\alpha_y}\ .
\end{align}
More precisely, we need to ensure that the tails of the heat kernels satisfy a uniform integrability condition. To this aim, many results of heat kernel upper bounds which have been established in the literature, such as Gaussian upper bounds (see, e.g., \cite[Theorem 2.3]{barlow_invariance_2010}), would suffice. Here, we follow  Nash-Davies' method as in Section 3 in \cite{carlen_upper_1986} applied to our context. 	 Indeed, by \cite[Theorem 3.25]{carlen_upper_1986}, if  Nash inequality in \cite[Eq.\ (3.18)]{carlen_upper_1986}	 holds true, then there exists a constant $c'>0$ depending only on the dimension $d\geq 1$ and $\mathfrak{c}$ such that
\begin{equation}\label{eq: conclusion Th.3.25 C_K_Stroock}
	q^{\boldsymbol{\alpha}}_t(x,y)\le \frac{c'}{1\vee \sqrt{t^d}}e^{-D(2t; x, y)}\ ,
\end{equation}
where 
\begin{equation}
D(r;x, y):= \sup_{\psi \in \ell^\infty (\Z^d,\boldsymbol \alpha)}\left(   \abs{ \psi(x)-\psi(y)}-r \Gamma (\psi)^2    \right)
\end{equation}
and 
\begin{equation}\Gamma (\psi)^2:= \sup_{x\in \Z^d} \left\{  \sum_{y:|y-x|=1}  \frac{\alpha_y}{2}   \left( e^{\psi(y)-\psi(x)} -1   \right)^2     \right\}\ ,
	\end{equation}
with the above quantity corresponding to the equation one line above \cite[Theorem 3.9]{carlen_upper_1986}.
For what concerns  Nash inequality, since $\alpha(x)\alpha(y)\ge 1$ for all $x,y\in\Z^d$,  we have, for all $f\in \ell^1(\Z^d, \boldsymbol \alpha)$,
\begin{align}\nonumber\label{eq:pre-nash}
	\mathcal{E}_{\boldsymbol \alpha}(f,f	)&:=\frac{1}{2}\sum_{x \in \Z^d}\sum_{y:|y-x|=1} \alpha(x)\alpha(y)\left(f(y)-f(x)\right)^2\\
	\nonumber
	&\ge \frac{1}{2}\sum_{x \in \Z^d}\sum_{y:|y-x|=1} (f(y)-f(x))^2\\
	&\ge C \norm{f}^{2+\frac{4}{d}}_{\ell^2(\Z^d,\nu)}\norm{f}_{\ell^1(\Z^d,\lambda)}^{-\frac{4}{d}}\ ,
\end{align}
where $\lambda$ is the counting measure on $\Z^d$. Note that  for the last inequality above we used   Nash inequality for the  continuous-time simple	 random walk (see,  e.g., \cite[Eq.\ (1.8)]{stroock_markov_1997}), with the constant $C > 0$ depending only on the dimension $d \geq 1$. Due to the assumed uniform ellipticity of $\boldsymbol \alpha$ (see Assumption \ref{definition  disorder}), the equivalence of the norms  $\left\|\cdot\right\|_{\ell^p(\Z^d,\lambda)}$ and $\left\| \cdot\right\|_{\ell^p(\Z^d,\boldsymbol \alpha)}$ together with \eqref{eq:pre-nash} yield
\begin{equation}
	\mathcal{E}_{\boldsymbol \alpha}(f,f)\ge C\, \mathfrak{c}^{-\left(1+\frac{2}{d}\right)}\norm{f}^{2+\frac{4}{d}}_{\ell^2(\Z^d, \boldsymbol \alpha)}\norm{f}_{\ell^1(\Z^d, \boldsymbol \alpha)}^{-\frac{4}{d}}\ ,
\end{equation} 
which corresponds to \cite[Eq.\ (3.18)]{carlen_upper_1986} with $A=\left(C^{-1} \mathfrak{c}^{1+\frac{2}{d}}\right)$, $\nu=d$ and $\delta=0$. Therefore, we get \eqref{eq: conclusion Th.3.25 C_K_Stroock} by \cite[Theorem 3.25]{carlen_upper_1986} for $\rho=1$.

Finally, by arguing as in the proof of Lemma 1.9 in \cite{stroock_markov_1997} and by the uniform ellipticity of $\boldsymbol \alpha$, we obtain the following proposition:
\begin{proposition}[{Heat kernel upper bound}]\label{proposition:heat_kernel_bounds}
	There exists a constant $c>0$ depending only on $d \geq 1$ and $\mathfrak{c}$ such that, for all environments $\boldsymbol \alpha$, $t > 0$ and $x, y \in \Z^d$, the following upper bound holds:
	\begin{equation}\label{eq:upper bound kernel}
		P^{\boldsymbol \alpha}\left(X^{\boldsymbol \alpha,x}_t = y \right)\le \frac{c}{1\vee \sqrt{t^d}}e^{ -\frac{\abs{x-y}}{1\vee \sqrt{t}}}.
	\end{equation} 	
\end{proposition}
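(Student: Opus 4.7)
The plan is to combine the kernel upper bound already obtained via the Carlen-Kusuoka-Stroock machinery in the preceding discussion, namely
\[
q^{\boldsymbol{\alpha}}_t(x,y) \leq \frac{c'}{1 \vee \sqrt{t^d}}\, e^{-D(2t; x, y)},
\]
with an explicit lower bound on the Davies distance $D(2t; x, y)$, and then convert the kernel bound to a transition-probability bound. The latter is immediate, since by \eqref{eq: heat kernel} and the uniform ellipticity \eqref{equation uniform ellipticity} one has $P^{\boldsymbol{\alpha}}(X^{\boldsymbol{\alpha},x}_t = y) = \alpha_y\, q^{\boldsymbol{\alpha}}_t(x,y) \leq \mathfrak{c}\, q^{\boldsymbol{\alpha}}_t(x,y)$, so only the lower bound on $D$ requires a genuine argument.

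For that lower bound I would make the standard Stroock-Zheng (Nash-Davies) choice of test function. Assuming without loss of generality $x \neq y$, set $u := (x-y)/|x-y|$ and, for a parameter $\lambda \in (0,1]$ to be optimized, take $\psi(z) := \lambda \, \langle u, z \rangle$ (truncated on a very large ball to ensure membership in $\ell^\infty(\Z^d,\boldsymbol{\alpha})$; since $\Gamma(\psi)^2$ only probes nearest-neighbor differences and $\psi(x)-\psi(y)$ only the two endpoints, such a truncation is inert provided the ball contains $\{x,y\}$ together with a one-step buffer). Then $\psi(x)-\psi(y) = \lambda |x-y|$, and using the elementary inequality $(e^{a}-1)^{2} \leq a^{2}\, e^{2|a|}$ with $|a| \leq \lambda$ together with $\alpha_y \leq \mathfrak{c}$ gives
\[
\Gamma(\psi)^{2} \leq \kappa_{d}\, \mathfrak{c}\, \lambda^{2}\, e^{2\lambda},
\]
where $\kappa_{d}$ is a constant depending only on the dimension, accounting for the sum over the $2d$ nearest neighbors.

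The main step is then the optimization in $\lambda$. I would take $\lambda := 1 \wedge (1/\sqrt{t})$, so that $e^{2\lambda} \leq e^{2}$ uniformly in $t > 0$. In the regime $t \leq 1$ one picks $\lambda = 1$ and obtains $D(2t;x,y) \geq |x-y| - 2\kappa_{d}\, \mathfrak{c}\, e^{2} \, t \geq |x-y| - 2\kappa_{d}\, \mathfrak{c}\, e^{2}$; in the regime $t \geq 1$ one picks $\lambda = 1/\sqrt{t}$ and obtains $D(2t;x,y) \geq |x-y|/\sqrt{t} - 2\kappa_{d}\, \mathfrak{c}\, e^{2}$. In either case,
\[
D(2t;x,y) \geq \frac{|x-y|}{1 \vee \sqrt{t}} - 2\kappa_{d}\, \mathfrak{c}\, e^{2},
\]
and the additive constant is absorbed into the final prefactor upon exponentiation.

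I do not foresee any substantial obstacle. The only mildly delicate point is that a \emph{single} scale of $\lambda$ must cover both the small-time and large-time regimes, which is precisely why the truncation $\lambda = 1 \wedge (1/\sqrt{t})$ is essential: without it, the factor $e^{2\lambda}$ coming from the discrete chain-rule bound $(e^{a}-1)^{2} \leq a^{2} e^{2|a|}$ would blow up and the optimization would fail. Apart from that, the remaining manipulations are elementary and the final constant $c$ indeed depends only on $d$ and $\mathfrak{c}$, as claimed.
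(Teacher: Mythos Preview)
Your proposal is correct and follows essentially the same route as the paper, which simply defers to the proof of Lemma~1.9 in \cite{stroock_markov_1997} together with uniform ellipticity; you have just written out the standard linear-test-function optimization that underlies that lemma. The truncation of $\psi$ to land in $\ell^\infty$ and the absorption of the additive constant into the prefactor are handled exactly as one would expect.
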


\subsection{Proof of Theorem \ref{theorem:uniform_convergence_semigroups}}\label{section:proof_uniform_convergence_semigroups} Let us conclude the proof of Theorem \ref{theorem:uniform_convergence_semigroups}.
\begin{proof}[Proof of Theorem \ref{theorem:uniform_convergence_semigroups}]
	First we prove that, for all $\boldsymbol \alpha \in \mathfrak B$ (see \eqref{eq:B}),  for all $t\ge0$ and $G \in \mathcal C_0(\R^d)$, we have	
	\begin{align}\label{eq:semigroup_uniform_space}
	\sup_{x \in \Z^d}
	\left|S^{N,\boldsymbol \alpha}_{tN^2}G(\tfrac{x}{N}) - \mathcal S_t^\varSigma G(\tfrac{x}{N})\right|\ \underset{N\to \infty}\longrightarrow\ 0\ .
	\end{align}
We  follow the ideas in \cite[Appendix A.2]{chen_quenched_2015}.  For all $u \in \R^d$ and $\varepsilon > 0$, let $\mathcal B_\varepsilon(u)$ (resp. $\closure{\mathcal B_\varepsilon(u)}$) denote the open (resp. closed) Euclidean ball of radius $\varepsilon > 0$ centered in $u \in \R^d$. Moreover, for all $\boldsymbol \alpha$, we define
	\begin{align*}
	\sigma^N_\varepsilon(u):=\inf\left\{t \geq 0: \frac{X^{\boldsymbol \alpha,0}_{tN^2}}{N}\in \mathcal B_\varepsilon(u) \right\}\quad \text{and}\quad \sigma_\varepsilon(u):=\inf \left\{ t \geq 0: B^\varSigma_t \in \mathcal B_\varepsilon(u)\right\}
	\end{align*}	
	to 	be the first hitting times of $\mathcal B_\varepsilon(u)$ of the random walks and Brownian motion, respectively. Then, as a consequence of Proposition \ref{proposition:QFCLT_site_disorder} (see also Remark \ref{remark:B}) and the strong Markov property of both processes, we have, for all  $\boldsymbol \alpha \in \mathfrak B$, for all $t\geq 0$, $T > 0$ and $G \in \mathcal C_0(\R^d)$, 
	\begin{align}\label{eq:vanish1}
	\sum_{\frac{y}{N}\in\closure{\mathcal B_\varepsilon(u)}\cap \tfrac{\Z^d}{N}} E^{\boldsymbol \alpha}\left[G\left(\frac{X^{\boldsymbol \alpha,y}_{tN^2}}{N}\right) \right] P_{\varepsilon,u,T}^{\boldsymbol \alpha}\left( \frac{y}{N}\right)\underset{N\to \infty}\longrightarrow
	\int_{\closure{\mathcal B_\varepsilon(u)}} \mathtt E\left[G(B^\varSigma_t+v)\right]\, \mathtt P_{\varepsilon,u,T}(\dd v)\ ,
	\end{align}
	where
	\begin{align*}
	P^{\boldsymbol \alpha}_{\varepsilon,u,T}\left(\tfrac{y}{N} \right):=P^{\boldsymbol \alpha}\left(\frac{X^{\boldsymbol \alpha,0}_{\sigma^N_\varepsilon(u)}}{N} = \frac{y}{N}\bigg| \sigma^N_\varepsilon(u)< T \right)
	\quad\text{and}\quad
	\mathtt P_{\varepsilon,u,T}\left(\dd v \right):=\mathtt P(B^\varSigma_{\sigma_\varepsilon(u)}=\dd v\big|\sigma_\varepsilon(u) < T)\ .
	\end{align*}
	Let $\{x_N\}_{N \in \N}\subseteq \Z^d$ be such that $\frac{x_N}{N}\to u$ as $N\to \infty$. Then, by the triangle inequality, we have, for all $\varepsilon > 0$, 
	\begin{align}\nonumber
	&\left|S^{N,\boldsymbol \alpha}_{tN^2}G(\tfrac{x_N}{N}) - \mathcal S^\varSigma_t(\tfrac{x_N}{N}) \right|\\
	\nonumber
	\leq&\ \left|S^{N,\boldsymbol \alpha}_{tN^2}G\left(\tfrac{x_N}{N}\right) -	\sum_{\frac{y}{N}\in\closure{\mathcal B_\varepsilon(u)}\cap \tfrac{\Z^d}{N}} E^{\boldsymbol \alpha}\left[G\left(\frac{X^{\boldsymbol \alpha,y}_{tN^2}}{N}\right) \right] P_{\varepsilon,u,T}^{\boldsymbol \alpha}\left( \frac{y}{N}\right)\right|\\
	\nonumber
	+&\ \left|	\sum_{\frac{y}{N}\in\closure{\mathcal B_\varepsilon(u)}\cap \tfrac{\Z^d}{N}} E^{\boldsymbol \alpha}\left[G\left(\frac{X^{\boldsymbol \alpha,y}_{tN^2}}{N}\right) \right] P_{\varepsilon,u,T}^{\boldsymbol \alpha}\left( \frac{y}{N}\right) - 	\int_{\closure{\mathcal B_\varepsilon(u)}} \mathtt E\left[G(B^\varSigma_t+v)\right]\, \mathtt P_{\varepsilon,u,T}(\dd v)\right|\\
	\label{3}
	+&\ \left|	\int_{\closure{\mathcal B_\varepsilon(u)}} \mathtt E\left[G(B^\varSigma_t+v)\right]\, \mathtt P_{\varepsilon,u,T}(\dd v) - \mathcal S^\varSigma_tG(\tfrac{x_N}{N}) \right|\ .
	\end{align}
As for the first term on the r.h.s.\ above, for all environments $\boldsymbol \alpha \in \mathfrak B$, we have,   by H\"older's inequality, 
	\begin{align*}
	 &\left|S^{N,\boldsymbol \alpha}_{tN^2}G\left(\tfrac{x_N}{N}\right) -	\sum_{\frac{y}{N}\in\closure{\mathcal B_\varepsilon(u)}\cap \tfrac{\Z^d}{N}} E^{\boldsymbol \alpha}\left[G\left(\frac{X^{\boldsymbol \alpha,y}_{tN^2}}{N}\right) \right] P_{\varepsilon,u,T}^{\boldsymbol \alpha}\left( \frac{y}{N}\right)\right|\\
	&\leq	\sum_{\frac{y}{N}\in\closure{\mathcal B_\varepsilon(u)}\cap \tfrac{\Z^d}{N}}\left|S^{N,\boldsymbol \alpha}_{tN^2}G\left(\tfrac{x_N}{N}\right)- S^{N,\boldsymbol \alpha}_{tN^2}G\left(\tfrac{y}{N}\right)\right| P_{\varepsilon,u,T}^{\boldsymbol \alpha}\left( \frac{y}{N}\right)\leq \sup_{\frac{y}{N}\in \closure{\mathcal B_\varepsilon(u)}}\left|S^{N,\boldsymbol \alpha}_{tN^2}G\left(\tfrac{x_N}{N}\right)- S^{N,\boldsymbol \alpha}_{tN^2}G\left(\tfrac{y}{N}\right) \right| \ .
\end{align*}
The above upper bound, since $\tfrac{x_N}{N}\to u$ as $N\to \infty$, yields, by Proposition \ref{prop:holder cont} and the uniform boundedness of the function $G \in \mathcal C_0(\R^d)$,
\begin{equation}\label{eq:vanish2}
\lim_{\varepsilon \to 0}\limsup_{N\to \infty}	\left|S^{N,\boldsymbol \alpha}_{tN^2}G\left(\tfrac{x_N}{N}\right) -	\sum_{\frac{y}{N}\in\closure{\mathcal B_\varepsilon(u)}\cap \tfrac{\Z^d}{N}} E^{\boldsymbol \alpha}\left[G\left(\frac{X^{\boldsymbol \alpha,y}_{tN^2}}{N}\right) \right] P_{\varepsilon,u,T}^{\boldsymbol \alpha}\left( \frac{y}{N}\right)\right|=0	
\end{equation}	
for all environments 	$\boldsymbol \alpha \in \mathfrak B$ and $t \geq 0$. A similar argument employing the uniform continuity of $G \in \mathcal C_0(\R^d)$ and the translation invariance of the law of Brownian motion ensures
\begin{equation}\label{eq:vanish3}
	\lim_{\varepsilon \to 0}\limsup_{N\to \infty}	\left|	\int_{\closure{\mathcal B_\varepsilon(u)}} \mathtt E\left[G(B^\varSigma_t+v)\right]\, \mathtt P_{\varepsilon,u,T}(\dd v) - \mathcal S^\varSigma_tG(\tfrac{x_N}{N})\right|=0
\end{equation}
for all $t \geq 0$. By combining \eqref{eq:vanish1}--\eqref{eq:vanish3}, we obtain, for all  $\boldsymbol \alpha \in \mathfrak B$, for all $G \in \mathcal C_0(\R^d)$, $t \geq 0$, $u \in \R^d$ and approximating points $\frac{x_N}{N}\to u$,
	\begin{equation}\label{eq:pointwise}
	\left| S^{N,\boldsymbol \alpha}_{tN^2}G(\tfrac{x_N}{N}) - \mathcal S^\varSigma_tG(\tfrac{x_N}{N})\right|\ \underset{N\to \infty}\longrightarrow 0\ .
\end{equation}

	In order to go from pointwise (i.e., \eqref{eq:pointwise}) to uniform convergence over points in $\Z^d$ (i.e., \eqref{eq:semigroup_uniform_space}), we  crucially use	 the heat kernel upper bound in Proposition \ref{proposition:heat_kernel_bounds} and the H\"older equicontinuity in Proposition \ref{prop:holder cont}. First, note that proving \eqref{eq:semigroup_uniform_space}  for continuous and compactly supported functions $G \in \mathcal C_c(\R^d)$  suffices, due to the density of $\mathcal C_c(\R^d)$ in $\mathcal C_0(\R^d)$ and the contractivity of the semigroups $\{S^{N,\boldsymbol \alpha}_{tN^2}, t \geq 0\}$ and $\{\mathcal S^\varSigma_t, t \geq 0\}$ w.r.t.\ the supremum norms on $\frac{\Z^d}{N}$ and $\R^d$, respectively. Hence, for all $G\in \mathcal C_c(\R^d)$ and for all compact sets $\mathcal K\subseteq \R^d$, \eqref{eq:pointwise}, Proposition \ref{prop:holder cont} and the uniform continuity of $\mathcal S^\varSigma_t G \in \mathcal C_0(\R^d)$ imply, for all $\boldsymbol \alpha \in \mathfrak B$,
	\begin{equation}
		\sup_{\frac{x}{N}\in \mathcal K\cap \frac{\Z^d}{N}}  \left| S^{N,\boldsymbol \alpha}_{tN^2}G(\tfrac{x}{N}) - \mathcal S^\varSigma_tG(\tfrac{x}{N})\right|\ \underset{N\to \infty}\longrightarrow 0\ .
	\end{equation}
	Letting $\text{\normalfont{supp}}(G) \subseteq \R^d$ denote the compact support of $G \in \mathcal C_c(\R^d)$, we have
	\begin{multline}\label{eq:fuori_compact}
		\sup_{\frac{x}{N}\in \mathcal K^c \cap \frac{\Z^d}{N}} \left| S^{N,\boldsymbol \alpha}_{tN^2}G(\tfrac{x}{N}) - \mathcal S^\varSigma_tG(\tfrac{x}{N})\right|\\
		\leq \sup_{u\in \R^d}\left| G(u)\right| 	\sup_{\frac{x}{N}\in \mathcal K^c\cap \frac{\Z^d}{N}} \left\{P^{\boldsymbol \alpha}\left(\frac{X^{\boldsymbol \alpha,x}_{tN^2}}{N}\in \text{\normalfont supp}(G)\right) + \mathtt P(B^\varSigma_t+x\in \text{\normalfont supp}(G)) \right\}\ .
	\end{multline}
Thus, by the heat kernel upper bounds for $\RWa$ (Proposition \ref{proposition:heat_kernel_bounds}) and analogous bounds for the non-degenerate Brownian motion $\{B_t^\varSigma, t \geq 0\}$,  we can choose $\mathcal K\subseteq \R^d$ such that the r.h.s.\ in \eqref{eq:fuori_compact} is arbitrarily small. This yields \eqref{eq:semigroup_uniform_space} for all $\boldsymbol \alpha \in \mathfrak B$.

To go from \eqref{eq:semigroup_uniform_space} to \eqref{eq:uniform_convergence_semigroups} in which the convergence is uniform over bounded intervals of time, we apply \cite[Chapter 1, Theorem 6.1]{EthierKurtz}. Indeed, for all realizations of the environment $\boldsymbol \alpha$, the semigroups $\{S^{N,\boldsymbol \alpha}_t,\, t\geq 0\}_{N\in \N}$ and $\{\mathcal S^\varSigma_t,\, t \geq 0\}$ are strongly continuous contraction semigroups in the Banach spaces $\{\mathcal C_0(\frac{\Z^d}{N}) \}_{N\in \N}$  and $\mathcal C_0(\R^d)$ (endowed with the corresponding supremum norms), respectively; moreover,  the projections $\pi_N :  \mathcal C_0(\R^d) \rightarrow \mathcal C_0(\tfrac{\Z^d}{N})$ given by $	\pi_NG(\tfrac{x}{N}) := G(\tfrac{x}{N})$ are linear and such that $\sup_{N\in \N}\,\left\|\pi_N\right\|_N=1<\infty$, with $\left\|\pi_N\right\|_N$ denoting the operator norm of $\pi_N$.  \end{proof}

\begin{remark}[Equivalent formulations of the arbitrary starting point quenched invariance principle]\label{remark: equivalence AQPQIP and convergence semigroup}
If one assumes, for a given realization of the environment $\boldsymbol \alpha$, the  invariance principle for the random walk $\RWa$ with arbitrary   starting positions, i.e.,
	\begin{equation}\label{eq:ASPQIP}
	\parbox{\dimexpr\linewidth-4em}{%
		\strut
		For all $T > 0$, for any  macroscopic  point $u\in \R^d$ and for any sequence of points $\{x_N\}_{N \in \N} \subseteq \Z^d$ such that $\tfrac{x_N}{N}\to u$ as $N \to \infty$,  the laws of  $\{\tfrac{1}{N}X^{\boldsymbol \alpha,x_N}_{tN^2},\, t \in [0,T]\}_{N \in \N}$, the diffusively rescaled $\RWa$ started from $\tfrac{x_N}{N}$,  converge weakly to the law of  $\{B^{\varSigma, u}_t:=B^\varSigma_t+u,\ t \in [0,T]\}$, the  Brownian motion started from $u \in \R^d$ and with a non-degenerate covariance matrix $\varSigma$ independent of the  realization of the environment $\boldsymbol \alpha$
		\strut}
	\end{equation}
	then  \eqref{eq:pointwise} follows immediately by the uniform continuity of $\mathcal S^\varSigma_t G \in \mathcal C_0(\R^d)$. By the same argument used in the final part of the proof of Theorem \ref{theorem:uniform_convergence_semigroups} above (i.e.,  the part of the proof immediately after   \eqref{eq:pointwise} involving the heat kernel upper bound in Proposition \ref{proposition:heat_kernel_bounds} and the H\"older equicontinuity in Proposition \ref{prop:holder cont})
	one gets the convergence in \eqref{eq:uniform_convergence_semigroups0}. Therefore, in view of the discussion just after 
	\eqref{eq:uniform_convergence_semigroups0}, we obtain that, under Assumption \ref{definition  disorder}, \eqref{eq:ASPQIP} and \eqref{eq:uniform_convergence_semigroups0} are equivalent.
	
\end{remark}

\begin{remark}[{Quenched local CLT}]
	As already mentioned,  \eqref{eq:uniform_convergence_semigroups}, namely the arbitrary starting point quenched invariance principle for the diffusively rescaled random walks $\RWa$,  is stronger than the quenched invariance principle for $\RWa$ starting from the origin. Another well-known strengthening of the quenched invariance principle  is the \emph{quenched local central limit theorem}  (see, e.g., \cite[Theorem 1.11 and Remark 1.12]{andres_invariance2015}, which applies to our context) for $\RWa$: if we denote by $k^{\varSigma}_t$ the heat kernel of the Brownian motion started at the origin, it holds that, for $\mathcal P$-a.e.\ environment $\boldsymbol \alpha$ and for any $\ell$, $T> 0$ and $\delta>0$,  
	\begin{align}\label{eq: local clt}
	\lim_{N\to \infty} \sup_{\abs{\frac{y}{N}}<\ell}\sup_{t\in[\delta,T]} \left| N^d P^{\boldsymbol \alpha}\left(\frac{X^{\boldsymbol \alpha,0}_{tN^2}}{N} =\frac{y}{N}  \right) - k^{\varSigma}_t(\tfrac{y}{N})  \right|\ =\ 0\ .
	\end{align}
	The proof of \eqref{eq: local clt} resembles that of Theorem \ref{theorem:uniform_convergence_semigroups} and, thus, one may wonder whether \eqref{eq: local clt} directly yields \eqref{eq:uniform_convergence_semigroups}.
	However, \eqref{eq: local clt} does  not seem to be of help when proving \eqref{eq:semigroup_uniform_space},  being the supremum over space in the arrival point and not in the starting point -- fixed to be the origin -- and being the supremum over time only on bounded intervals away from $t=0$.
	
\end{remark}

\section{Proof of the hydrodynamic limit}\label{section proof HDL}

In this section we present the proof of  Theorem \ref{theorem HDL}, which consists of two steps: ensuring tightness of the empirical density fields and establishing convergence of their finite dimensional distributions to the unique solution of \eqref{heat equation}. In both steps, we use the following representation for the renormalized occupation variables: for all realizations of the environment $\boldsymbol \alpha$, there exists a probability space such that a.s.,  for all initial configurations $\eta \in \mathcal X^{\boldsymbol \alpha}$, for all $x \in \Z^d$ and $t \geq 0$, 
\begin{align}\label{eq:mild_solution_representation}
\tfrac{\eta_t(x)}{\alpha_x}= S^{\boldsymbol \alpha}_t(\tfrac{\eta(\cdot)}{\alpha_\cdot})(x) + \int_0^t S^{\boldsymbol \alpha}_{t-s} \dd M^{\boldsymbol \alpha}_s(x)\ ,
\end{align} 
where $\{M^{\boldsymbol \alpha}_\cdot(x),\, x \in \Z^d\}$ is a family of square integrable martingales w.r.t.\ the natural filtration of $\SEPa$ (see also \eqref{mild solution}--\eqref{eq:martingale_occupation_variable}), 	
whose predictable quadratic covariations are given by
\begin{align}\label{eq:predictable_cov1}
\langle M^{\boldsymbol \alpha}(x),M^{\boldsymbol \alpha}(y)\rangle_t = - \ind_{\{|x-y|=1\}}\int_0^t \alpha_x \alpha_y\, \left(\tfrac{\eta_s(x)}{\alpha_x} - \tfrac{\eta_s(y)}{\alpha_y} \right)^2\, \dd s	
\end{align}	
for $x, y \in \Z^d$ with $x \neq y$, 
and
\begin{align}\label{eq:predictable_cov2}
\langle M^{\boldsymbol \alpha}(x),M^{\boldsymbol \alpha}(x)\rangle_t= - \sum_{\substack{y\in \Z^d\\|y-x|=1}} \langle M^{\boldsymbol \alpha}(x),M^{\boldsymbol \alpha}(y)\rangle_t
\end{align}
for $x \in \Z^d$.
The identity in \eqref{eq:mild_solution_representation} expresses the solution of the following infinite system of stochastic differential equations (cf.\ \eqref{equation duality function}--\eqref{equation duality})
\begin{align*}
\left\{
\begin{array}{rcll}
\dd (\tfrac{\eta_t(\cdot)}{\alpha_\cdot})(x) &=&  A^{\boldsymbol \alpha}(\tfrac{\eta_{t^-}(\cdot)}{\alpha_\cdot})(x)\, \dd t + \dd M^{\boldsymbol \alpha}_t(x)\  , &x\in \Z^d\ ,\ 	t\geq 0\\[.15cm]
\tfrac{\eta_0(x)}{\alpha_x} &=& \tfrac{\eta(x)}{\alpha_x}\ , &x \in \Z^d\ ,
\end{array}
\right.
\end{align*}
as a mild solution (see, e.g., \cite[Chapter 6, Section 1]{prato_stochastic_2014}). 	The rigorous proof of the identity in \eqref{eq:mild_solution_representation}  -- in which the r.h.s.\ contains infinite summations -- is postponed to Appendix \ref{section:ladder} below. The idea of the proof is to first provide a so-called \textquotedblleft ladder representation\textquotedblright\ for $\SEPa$ in terms of a symmetric exclusion process which allows at most one particle per site; then obtain a mild solution representation analogous to the one in \eqref{eq:mild_solution_representation} for such \textquotedblleft ladder\textquotedblright\ exclusion process as done in, e.g., \cite{nagy_symmetric_2002, faggionato_bulk_2007, redig_symmetric_2020}. The same strategy can be applied to rigorously verify the identities in \eqref{eq:predictable_cov1}--\eqref{eq:predictable_cov2}. We refer to Appendix \ref{section:ladder} for further details.

\subsection{Tightness} \label{section tightness}
In the proof of tightness for the empirical density fields we employ the uniform convergence over time  of the semigroups established in Theorem \ref{theorem:uniform_convergence_semigroups} and Corollary \ref{corollary:l1}. Tightness in quenched random environment, which by Mitoma's tightness criterion \cite{mitoma_tightness_1983} follows from tightness of the following real-valued processes
\begin{align}\label{eq:sequence_fields_2}
\left\{\mathsf X^N_t(G),\, t \in [0,T]	  \right\}_{N \in \N}\ ,\qquad \forall\, G \in \mathscr S(\R^d)\ ,
\end{align}
has been established via the  strategy of employing corrected empirical density fields (\cite{jara_quenched_2008, goncalves_scaling_2008, faggionato_hydrodynamic_2009, faggionato_zerorange_2010} and \cite{jara_hydrodynamic_2011}). In what follows, we opt for a different strategy by applying    the tightness criterion developed in \cite[Appendix B]{redig_symmetric_2020}, which, for convenience of the reader, we report below.
\begin{theorem}[{Tightness criterion} {\cite[Theorem B.4]{redig_symmetric_2020}}]\label{theorem:tightness_criterion}
	For a fixed $T > 0$, let	 $\left\{\mathsf Z^N_t,\, t \in [0,T]\right\}_{N \in \N}$ be a family of real-valued stochastic processes with laws $\{\mathscr P^N\}_{N\in \N}$. Then,  this family is tight in the Skorokhod space $\mathcal D([0,T],\R)$ if the following  conditions hold:
	\begin{enumerate}[label={\normalfont (T\arabic*)},ref={\normalfont (T\arabic*)}]
		\item \label{it:T1}For all $t$ in a dense subset of $[0,T]$ which includes $T$, 
		\begin{align*}
		\lim_{\ell\to \infty} \limsup_{N\to \infty} \mathscr P^N\left(\left|\mathtt Z^N_t \right| > \ell \right)=0\ .
		\end{align*}
		\item \label{it:T2}For all $\varepsilon > 0$, there exists $h_\varepsilon > 0$ and $N_\varepsilon \in \N$ such that, for all $N \geq N_\varepsilon$, there exist deterministic functions $\psi^N_\varepsilon, \psi_\varepsilon : [0,h_\varepsilon]\to [0,\infty)$ and non-negative values $\phi^N_\varepsilon$ satisfying the following properties:
		\begin{enumerate}[label={\normalfont (\roman*)}]
			\item \label{it:i}The functions $\psi^N_\varepsilon$ are non-decreasing.
			\item \label{it:ii} For all $h \in [0,h_\varepsilon]$ and $t \in [0,T]$, we have
			\begin{align*}
			\mathscr P^N\left(\left|\mathtt Z^N_{t+h}-\mathtt Z^N_t \right|>\varepsilon\big| \mathscr F^N_t \right)\leq \psi^N_\varepsilon(h)\ ,\quad \text{\normalfont a.s.}\ ,
			\end{align*}
			where $\left\{\mathscr F^N_t, t \geq 0 \right\}$ denotes the natural filtration of $\left\{\mathtt Z^N_t, t \geq 0 \right\}$.
			\item \label{it:iii} For all $h \in [0,h_\varepsilon]$, we have $\psi^N_\varepsilon(h)\leq \psi_\varepsilon(h)+\phi^N_\varepsilon$.
			\item \label{it:iv} $\phi^N_\varepsilon\to 0$ as $N\to \infty$.
			\item \label{it:v} $\psi_\varepsilon(h)\to 0$ as $h\to 0$.
		\end{enumerate}
	\end{enumerate}
	
\end{theorem}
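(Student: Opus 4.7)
My plan is to reduce the statement to Aldous' classical tightness criterion for $\mathcal D([0,T],\R)$, which asks for (a) tightness of every one-dimensional marginal, and (b) an Aldous-type modulus condition at stopping times. The hypotheses (T1) and (T2) are tailor-made to supply these two ingredients separately: (T1) addresses the marginal tightness on a dense subset, while (T2) is a quantitative, filtration-adapted control on increments.

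For the marginals, I would first observe that (T1) gives tightness of $\{\mathtt Z^N_t\}_{N\in\N}$ on a dense set $D\subseteq[0,T]$ containing $T$, and then extend it to every $t\in[0,T]$: picking $s\in D$ with $|s-t|\leq h_\varepsilon$, the tower property applied to (T2)(ii)--(iii) yields
\begin{align*}
\mathscr P^N\left(|\mathtt Z^N_t - \mathtt Z^N_s|>\varepsilon\right)\ \leq\ \psi_\varepsilon(|s-t|) + \phi^N_\varepsilon,
\end{align*}
which is uniformly small in $N$ by (T2)(iv)--(v). To verify the Aldous condition, I need that for every $\varepsilon,\eta>0$ there exist $\delta>0$ and $N_0$ such that, for all $N\geq N_0$, all $\{\mathscr F^N_t\}$-stopping times $\tau\leq T$ and all $\theta\in[0,\delta]$, $\mathscr P^N(|\mathtt Z^N_{\tau+\theta}-\mathtt Z^N_\tau|>\varepsilon)\leq\eta$. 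For a discrete-valued $\tau$ supported on $\{s_1,\ldots,s_m\}$, partitioning on $\{\tau=s_i\}\in\mathscr F^N_{s_i}$ and applying (T2)(ii) conditionally on $\mathscr F^N_{s_i}$ on each piece gives the bound $\psi^N_\varepsilon(\theta)\leq \psi_\varepsilon(\delta)+\phi^N_\varepsilon$. A general bounded stopping time $\tau$ can then be approximated from above by dyadic discrete stopping times $\tau_k\downarrow\tau$, and the c\`adl\`ag regularity of $\mathtt Z^N$ transfers the bound to $\tau$ in the limit.

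The main obstacle is precisely the extension of (T2)(ii), which is conditional on the filtration at a deterministic time, to arbitrary stopping times in the absence of any Markov assumption. The discretize-and-pass-to-the-limit argument above handles this using only the adapted filtration structure, and the uniformity of the bound $\psi_\varepsilon(\delta)+\phi^N_\varepsilon$ survives the limit thanks to (T2)(iii)--(v). Combining the upgraded marginal tightness with the Aldous condition yields tightness in $\mathcal D([0,T],\R)$ via Aldous' theorem, completing the proof.
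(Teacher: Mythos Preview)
The paper does not give a proof of this statement: Theorem~\ref{theorem:tightness_criterion} is quoted verbatim from \cite[Theorem~B.4]{redig_symmetric_2020} ``for convenience of the reader'' and is used only as a black box in the proof of Proposition~\ref{prop:fortightness}. There is therefore no proof in this paper to compare your proposal against.

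That said, your reduction to Aldous' criterion is a sound route. For discrete-valued stopping times $\tau$, partitioning on $\{\tau=s_i\}\in\mathscr F^N_{s_i}$ and applying (T2)(ii) yields $\mathscr P^N(|\mathtt Z^N_{\tau+\theta}-\mathtt Z^N_\tau|>\varepsilon)\leq \psi^N_\varepsilon(\theta)$; then (T2)(i) followed by (T2)(iii) gives $\psi^N_\varepsilon(\theta)\leq\psi^N_\varepsilon(\delta)\leq\psi_\varepsilon(\delta)+\phi^N_\varepsilon$ for $\theta\leq\delta\leq h_\varepsilon$, and (T2)(iv)--(v) make this small. The passage to general bounded stopping times by approximating from above with dyadic stopping times, using right-continuity of the paths and Fatou's lemma for the lower-semicontinuous indicator $\ind_{\{|\cdot|>\varepsilon\}}$, is also correct. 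Combined with the marginal-tightness extension you describe, Aldous' criterion indeed applies. Whether this matches the argument actually given in \cite[Appendix~B]{redig_symmetric_2020} cannot be determined from the present paper.
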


As we show in the proof of Proposition \ref{prop:fortightness} below, this criterion, the semigroup convergence in Theorem \ref{theorem:uniform_convergence_semigroups} and  the following mild solution representation of the empirical density fields (see also \eqref{elll})
\begin{equation}\label{eq:second_mild_solution}
\mathsf X^N_{t+h}(G)= \mathsf X^N_t(S^{N,\boldsymbol \alpha}_{hN^2}G) + \int_{tN^2}^{(t+h)N^2} \dd\mathsf M^N_s(S^{N,\boldsymbol \alpha}_{(t+h)N^2-s}G)\ ,\quad t, h > 0\ ,\ G \in \mathscr S(\R^d)\ ,
\end{equation}
  yield  tightness directly for the processes in \eqref{eq:sequence_fields_2}.

\begin{proposition}[{Tightness}]\label{prop:fortightness} For all environments $\boldsymbol \alpha \in \mathfrak A \cap \mathfrak B$ (see \eqref{eq:A} and \eqref{eq:B}) and for all $T > 0$, the sequence $$\left\{\mathsf X^N_t,\, t \in [0,T]  \right\}_{N \in \N}$$ is tight in $\mathcal D([0,T],\mathscr S'(\R^d))$. As a consequence, $\{\mathsf X^N_t,\, t \geq 0\}_{N \in \N}$ is tight in $\mathcal D([0,\infty),\mathscr S'(\R^d))$.
\end{proposition}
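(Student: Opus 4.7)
The plan is to invoke Mitoma's tightness criterion \cite{mitoma_tightness_1983}, which reduces relative compactness of $\{\mathsf X^N_\cdot\}_{N\in\N}$ in $\mathcal D([0,T],\mathscr S'(\R^d))$ to the tightness in $\mathcal D([0,T],\R)$ of each real-valued projection $\{\mathsf X^N_\cdot(G)\}_{N\in\N}$, for $G\in\mathscr S(\R^d)$. For each such $G$, I apply Theorem \ref{theorem:tightness_criterion} to $\mathtt Z^N_t:=\mathsf X^N_t(G)$ with $\{\mathscr F^N_t\}_{t\geq 0}$ the natural filtration of the $N^2$-sped-up process $\SEPa$. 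The driving observation is the mild-solution decomposition \eqref{eq:second_mild_solution}: for all $0\le t\le t+h\le T$,
\begin{equation*}
\mathsf X^N_{t+h}(G) - \mathsf X^N_t(G) \;=\; \mathsf X^N_t\bigl(S^{N,\boldsymbol\alpha}_{hN^2}G - G\bigr) \;+\; \int_{tN^2}^{(t+h)N^2}\dd \mathsf M^N_s\bigl(S^{N,\boldsymbol\alpha}_{(t+h)N^2-s}G\bigr)\ ,
\end{equation*}
which separates the increment into a semigroup-difference piece and a centered martingale piece.

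Condition \ref{it:T1} is immediate from the deterministic upper bound $|\mathsf X^N_t(G)|\le \tfrac{\mathfrak c}{N^d}\sum_{x\in\Z^d}|G(\tfrac{x}{N})|$, which is a Riemann sum approximation of $\mathfrak c\,\|G\|_{L^1(\R^d)}$ and thus uniformly bounded in $N$ and $t$. For \ref{it:T2} applied to the semigroup-difference piece, I would use the a.s.\ estimate $|\mathsf X^N_t(S^{N,\boldsymbol\alpha}_{hN^2}G-G)|\le \tfrac{\mathfrak c}{N^d}\sum_x|S^{N,\boldsymbol\alpha}_{hN^2}G(\tfrac{x}{N})-G(\tfrac{x}{N})|$ and insert $\mathcal S^\varSigma_h G(\tfrac{x}{N})$ by the triangle inequality. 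The first resulting sum is dominated by its $\boldsymbol\alpha$-weighted analogue (since $\alpha_x\ge 1$), which vanishes uniformly in $h\in[0,T]$ as $N\to\infty$ by Corollary \ref{corollary:l1}, hence contributes to $\phi^N_\varepsilon$; the second is a deterministic Riemann sum whose $N\to\infty$ limit is $\mathfrak c\,\|\mathcal S^\varSigma_h G-G\|_{L^1(\R^d)}$ (via Lemma \ref{lemma:ergodic_theorem} applied to the equicontinuous family $\{h\mapsto\mathcal S^\varSigma_h G:h\in[0,T]\}$, plus the uniform $\boldsymbol \alpha$-weight absorbed into a trivial Riemann estimate), and tends to $0$ as $h\to 0$ by strong continuity of the Brownian semigroup on $L^1(\R^d)$, contributing to $\psi_\varepsilon(h)$.

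For the martingale piece I would apply Markov's inequality conditionally on $\mathscr F^N_{tN^2}$ and compute the conditional second moment via \eqref{eq:predictable_cov1}--\eqref{eq:predictable_cov2}. Expanding the double sum and using $\alpha_x\le\mathfrak c$ together with $|\eta_s(x)/\alpha_x-\eta_s(y)/\alpha_y|\le 1$, the task reduces to controlling $\tfrac{1}{N^{2d}}\int_{tN^2}^{(t+h)N^2}\sum_{|x-y|=1}\bigl(\alpha_x F_s(\tfrac{x}{N})-\alpha_y F_s(\tfrac{y}{N})\bigr)^2\dd s$ with $F_s=S^{N,\boldsymbol\alpha}_{(t+h)N^2-s}G$. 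This is the main technical obstacle, because the natural Dirichlet form for $\RWa$ carries the weights $\alpha_x\alpha_y$, not the products $\alpha_x F(\tfrac{x}{N})\cdot \alpha_y F(\tfrac{y}{N})$ appearing above. I would split $(\alpha_x F_s(\tfrac{x}{N})-\alpha_y F_s(\tfrac{y}{N}))^2\le 2\alpha_x^2(F_s(\tfrac{x}{N})-F_s(\tfrac{y}{N}))^2+2(\alpha_x-\alpha_y)^2 F_s(\tfrac{y}{N})^2$: the first piece, dominated by $2\mathfrak c$ times the $\boldsymbol\alpha$-Dirichlet form of $F_s$, integrates exactly to $2\mathfrak c(\|G_N\|_{\ell^2(\Z^d,\boldsymbol\alpha)}^2-\|\Sa_{hN^2}G_N\|_{\ell^2(\Z^d,\boldsymbol\alpha)}^2)$ via the energy identity for the self-adjoint generator $\Aa$ (where $G_N(x):=G(\tfrac{x}{N})$), a quantity that, by Lemma \ref{lemma:ergodic_theorem} applied to $(\mathcal S^\varSigma_uG)^2$, rescales to $\E_{\mathcal P}[\alpha_0](\|G\|_{L^2}^2-\|\mathcal S^\varSigma_hG\|_{L^2}^2)=O(h)$ after dividing by $N^d$; the second piece is controlled via $\ell^2(\Z^d,\boldsymbol\alpha)$-contractivity of $\{\Sa_t\}$ and the ergodic averaging of Lemma \ref{lemma:ergodic_theorem}. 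The resulting variance bound is of the form $\psi_\varepsilon(h)+\phi^N_\varepsilon$, uniform in $t\in[0,T]$, with $\psi_\varepsilon(h)\to 0$ as $h\to 0$ and $\phi^N_\varepsilon\to 0$ as $N\to\infty$.

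Enforcing monotonicity by setting $\psi^N_\varepsilon(h):=\sup_{h'\le h}(\text{sum of the two bounds at }h')$ yields \ref{it:i}, and the above estimates furnish \ref{it:ii}--\ref{it:v}. Theorem \ref{theorem:tightness_criterion} then gives tightness of each $\{\mathsf X^N_\cdot(G)\}$ in $\mathcal D([0,T],\R)$, and Mitoma's criterion promotes this to tightness of $\{\mathsf X^N_\cdot\}$ in $\mathcal D([0,T],\mathscr S'(\R^d))$. The final extension to $\mathcal D([0,\infty),\mathscr S'(\R^d))$ follows from the standard fact that a family of laws on the latter space is tight if and only if its restrictions to each $\mathcal D([0,T],\mathscr S'(\R^d))$ are tight (see, e.g., \cite{EthierKurtz}).
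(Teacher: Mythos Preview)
Your overall architecture --- Mitoma's criterion, then Theorem~\ref{theorem:tightness_criterion} applied to the two pieces of the mild-solution decomposition \eqref{eq:second_mild_solution} --- matches the paper's exactly, and your handling of \ref{it:T1} and of the semigroup-difference piece is fine (the paper in fact arranges, via Corollary~\ref{corollary:l1}, that this piece is deterministically below $\varepsilon/2$ for $h\le h_\varepsilon$ and $N\ge N_\varepsilon$, but your $L^1$ route works as well).

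The genuine gap is in the quadratic variation of the martingale piece. Compute the carr\'e du champ of the linear function $\eta\mapsto\tfrac{1}{N^d}\sum_x F_s(\tfrac{x}{N})\,\eta(x)$ directly from $\La$: a jump $x\to y$ has rate $\eta(x)(\alpha_y-\eta(y))$ and changes this function by $\tfrac{1}{N^d}\bigl(F_s(\tfrac{y}{N})-F_s(\tfrac{x}{N})\bigr)$. This yields
\[
\frac{1}{N^{2d}}\sum_{\substack{\{x,y\}\subseteq\Z^d\\|x-y|=1}}\bigl[\eta_s(x)(\alpha_y-\eta_s(y))+\eta_s(y)(\alpha_x-\eta_s(x))\bigr]\bigl(F_s(\tfrac{x}{N})-F_s(\tfrac{y}{N})\bigr)^2\,,
\]
and the bracket is bounded by $\alpha_x\alpha_y$. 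This is \emph{already} the $\boldsymbol\alpha$-Dirichlet form of $F_s=S^{N,\boldsymbol\alpha}_{(t+h)N^2-s}G$: there is no weight mismatch and no need to split. The energy identity then gives precisely the first inequality in \eqref{variance}, namely
\[
\frac{1}{2N^d}\cdot\frac{1}{N^d}\sum_{x\in\Z^d}\Bigl(|G(\tfrac{x}{N})|^2-|S^{N,\boldsymbol\alpha}_{hN^2}G(\tfrac{x}{N})|^2\Bigr)\alpha_x\,,
\]
which the paper factors as $|G|^2-|S^{N,\boldsymbol\alpha}_{hN^2}G|^2=(G+S^{N,\boldsymbol\alpha}_{hN^2}G)(G-S^{N,\boldsymbol\alpha}_{hN^2}G)$ and bounds by the sup-norm difference $\sup_x|G(\tfrac{x}{N})-S^{N,\boldsymbol\alpha}_{hN^2}G(\tfrac{x}{N})|$ times an $\ell^1(\boldsymbol\alpha)$ quantity. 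This is exactly where $\psi^N_\varepsilon$, $\psi_\varepsilon$, $\phi^N_\varepsilon$ in \eqref{eq:psiNeps}--\eqref{eq:phiNeps} come from, and Theorem~\ref{theorem:uniform_convergence_semigroups} handles the rest.

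Your expression $(\alpha_xF_s(\tfrac{x}{N})-\alpha_yF_s(\tfrac{y}{N}))^2$ is the wrong quadratic form, and the ``main technical obstacle'' it creates is a phantom. Worse, your proposed workaround fails in low dimension: the second piece $2(\alpha_x-\alpha_y)^2F_s(\tfrac{y}{N})^2$, controlled only via $\ell^2$-contractivity, contributes on the order of $\tfrac{1}{N^{2d}}\cdot hN^2\cdot\sum_y G(\tfrac{y}{N})^2\asymp h/N^{d-2}$, which diverges for $d=1$ and fixed $h>0$ whenever $(\alpha_x-\alpha_y)^2$ does not vanish on average (e.g.\ for i.i.d.\ nonconstant $\boldsymbol\alpha$). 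The remedy is simply to redo the quadratic-variation computation as above and observe that the obstacle was never there.
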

\begin{proof}
All throughout the proof, we fix $\boldsymbol \alpha \in \mathfrak A\cap \mathfrak B$.
	As mentioned above, it suffices to  show that conditions \ref{it:T1} and \ref{it:T2} in Theorem \ref{theorem:tightness_criterion}   hold for 
	\begin{equation}\label{eq:processes}
	\left\{\mathsf Z^N_t,\, t\in [0,T]  \right\}_{N \in \N}=\left\{\mathsf X^N_t(G),\, t\in [0,T] \right\}_{N \in \N}\ ,
	\end{equation}
	for all $G \in \mathscr S(\R^d)$. Because \ref{it:T1} is a consequence of Proposition  \ref{proposition finite-dim distributions} below, it suffices to show \ref{it:T2}. To this purpose, we fix $G \in \mathscr S(\R^d)$ and set, for all $\varepsilon > 0$, $h \geq 0$ and $N \in \N$, 	
	\begin{align}\label{eq:psiNeps}
	\psi^N_\varepsilon(h)\coloneqq&\ \frac{C}{\varepsilon^2} \sup_{h' \in [0,h]} \sup_{x \in \Z^d}\left|G(\tfrac{x}{N}) - S^{N,\boldsymbol \alpha}_{h'N^2}G(\tfrac{x}{N}) \right|\\
	\label{eq:psieps}
	\psi_\varepsilon(h) \coloneqq&\ \frac{C}{\varepsilon^2} \sup_{h' \in [0,h]} \sup_{u \in \R^d}\left|G(u) - \mathcal S^\varSigma_{h'}G(u) \right|
	\end{align}
	and
	\begin{equation}\label{eq:phiNeps}
	\phi^N_\varepsilon \coloneqq \frac{C}{\varepsilon^2} \sup_{t \in [0,T]}\sup_{x \in \Z^d}\left|S^{N,\boldsymbol \alpha}_{tN^2}G(\tfrac{x}{N})-\mathcal S^\varSigma_tG(\tfrac{x}{N}) \right|\ ,
	\end{equation}
	where	
	\begin{equation}
		C\coloneqq\sup_{N\in \N} \frac{1}{N^d}\sum_{x \in \Z^d}\left|G(\tfrac{x}{N}) \right|\alpha_x 
	\end{equation}
is a  constant independent of $N\in \N$ and,  since $\boldsymbol \alpha \in \mathfrak A$ (see \eqref{eq:A}), finite. As a consequence of the triangle inequality, Theorem \ref{theorem:uniform_convergence_semigroups} and the continuity of $h\in [0,\infty)\mapsto \mathcal S^\varSigma_h G\in \mathcal C_0(\R^d)$,  the functions in \eqref{eq:psiNeps}--\eqref{eq:phiNeps} satisfy the conditions in items \ref{it:i}, \ref{it:ii}, \ref{it:iv} and \ref{it:v} of the tightness criterion in Theorem \ref{theorem:tightness_criterion}. In the remainder of the proof, we verify also the remaining condition \ref{it:iii} in that theorem.

By \eqref{eq:second_mild_solution} and the triangle inequality, we have, for all $t, h \geq 0$ and $N \in \N$, 
\begin{align}\label{eq:bound}
\P^{\boldsymbol \alpha}_{\nu_N^{\boldsymbol \alpha}}\left(\left|\mathsf X^N_{t+h}(G)-\mathsf X^N_t(G) \right|>\varepsilon \big| \mathcal  F^N_t \right)
	\leq&\ \P^{\boldsymbol \alpha}_{\nu_N^{\boldsymbol \alpha}}\left(\left|\mathsf X^N_t(S^{N,\boldsymbol \alpha}_{hN^2}G-G) \right|>\frac{\varepsilon}{2}\Big|\mathcal  F^N_t \right)\\
	\nonumber
+&\ \P^{\boldsymbol \alpha}_{\nu_N^{\boldsymbol \alpha}}\left(\left| \int_{tN^2}^{(t+h)N^2} \dd\mathsf M^N_s(S^{N,\boldsymbol \alpha}_{(t+h)N^2-s}G) \right|>\frac{\varepsilon}{2}\Bigg\vert\mathcal  F^N_t \right)\ ,
\end{align}
with $\mathcal  F^N_t\coloneqq\sigma\{\mathsf X^N_s,\, s \leq t\}$.
	The boundedness of the occupation variables of $\SEPa$,  the convergence in \eqref{eq:l1sup_convergence} and the continuity of $h \in [0,\infty)\mapsto \mathcal S^\varSigma_h G \in \mathcal C_0(\R^d)$ allows us to choose $h_\varepsilon > 0$ and $N_\varepsilon \in \N$ such that the first term on the r.h.s.\ in \eqref{eq:bound} equals zero for all $h\in [0,h_\varepsilon]$, $N \geq N_\varepsilon$ and $t \geq 0$, i.e., 
	\begin{equation}\label{eq:bound2}
		\P^{\boldsymbol \alpha}_{\nu_N^{\boldsymbol \alpha}}\left(\left|\mathsf X^N_t(S^{N,\boldsymbol \alpha}_{hN^2}G-G) \right|>\frac{\varepsilon}{2}\Big|\mathcal  F^N_t \right) = 0 \ ,\qquad h \in [0,h_\varepsilon]\ ,\ N \geq N_\varepsilon\ .
	\end{equation} As for the second term on the r.h.s.\ in \eqref{eq:bound}, by 	Chebyshev's inequality and the first inequality in \eqref{variance} below, we obtain, for all $h \in [0,h_\varepsilon]$, $N\geq N_\varepsilon$ and $t\geq 0$,
	\begin{equation}\label{eq:bound3}
		 \P^{\boldsymbol \alpha}_{\nu_N^{\boldsymbol \alpha}}\left(\left| \int_{tN^2}^{(t+h)N^2} \dd\mathsf M^N_s(S^{N,\boldsymbol \alpha}_{(t+h)N^2-s}G) \right|>\frac{\varepsilon}{2}\Bigg\vert\mathcal  F^N_t \right)\leq \psi^N_\varepsilon(h)\ ,\qquad \text{a.s.}\ .
	\end{equation}
By combining \eqref{eq:bound}--\eqref{eq:bound3}, condition \ref{it:iii} in Theorem \ref{theorem:tightness_criterion}  holds true for the process  \eqref{eq:processes}, thus yielding the desired result.
\end{proof}

\subsection{Convergence of finite dimensional distributions} \label{section finite dimensional distributions}
In the following proposition -- which is an adaptation of, e.g., \cite[Lemma 12]{nagy_symmetric_2002}, \cite[Lemma 3.1]{faggionato_bulk_2007},  \cite[Lemma 5.1]{redig_symmetric_2020} -- we prove \eqref{equation variance va a 0}. To this purpose, recall the definitions of $\P^{\boldsymbol \alpha}_{\nu_N^{\boldsymbol \alpha}}$, $\P^{\boldsymbol \alpha}_\eta$, $\E^{\boldsymbol \alpha}_{\nu_N^{\boldsymbol \alpha}}$ and $\E^{\boldsymbol \alpha}_\eta$ at the beginning of Section \ref{Section HDL} (below \eqref{equation empirical density field}).
\begin{lemma}\label{proposition variance} 
	For any given realization of the environment $\boldsymbol \alpha$, for all $N \in \N$, $G \in \mathscr S(\R^d)$, $\eta \in \Xa$ and $t \geq 0$, we have 
	\begin{multline}\label{variance}
	\E^{\boldsymbol \alpha}_\eta\left[\left(   \frac{1}{N^d}\sum_{x \in \Z^d} \alpha_x \int_0^{tN^2}  S^{N,\boldsymbol \alpha}_{tN^2-s}G(\tfrac{x}{N})\, \dd M^{\boldsymbol \alpha}_s(x)     \right)^2\right]\\ \leq \frac{1}{2N^d} \frac{1}{N^d}\sum_{x \in \Z^d} \left(\left|G(\tfrac{x}{N}) \right|^2 - \left|S^{N,\boldsymbol \alpha}_{tN^2}G(\tfrac{x}{N}) \right|^2\right) \alpha_x \leq \frac{1}{2N^d} \frac{1}{N^d}\sum_{x \in \Z^d} \left|G(\tfrac{x}{N}) \right|^2 \alpha_x \ .
	\end{multline}
	As a consequence of \eqref{variance} and the uniformity of the upper bound w.r.t.\ $\eta \in \Xa$, we further get
	\begin{align}\label{e1}
	\E^{\boldsymbol \alpha}_{\nu_N^{\boldsymbol \alpha}}\left[\left(   \frac{1}{N^d}\sum_{x \in \Z^d} \alpha_x \int_0^{tN^2}  S^{N,\boldsymbol \alpha}_{tN^2-s}G(\tfrac{x}{N})\, \dd M^{\boldsymbol \alpha}_s(x)     \right)^2\right]\ \underset{N \to \infty}\longrightarrow\ 0\ ,
	\end{align}
	where $\{\nu_N^{\boldsymbol \alpha}\}_{N\in \N}$ is the sequence of probability measures on $\mathcal X^{\boldsymbol \alpha}$ given in Theorem \ref{theorem HDL}.
\end{lemma}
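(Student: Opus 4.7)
The plan is to compute the second moment by Itô's isometry for the martingale family $\{M^{\boldsymbol\alpha}(x)\}_{x\in\Z^d}$, use the explicit covariations \eqref{eq:predictable_cov1}--\eqref{eq:predictable_cov2} together with the trivial bound $\eta_s(x)/\alpha_x\in[0,1]$ to reduce the problem to a Dirichlet-form time integral along the semigroup orbit, and then close the argument with the energy identity associated with the self-adjoint generator $A^{\boldsymbol\alpha}$ on $\ell^2(\Z^d,\boldsymbol\alpha)$. More concretely, I would first rewrite the quantity inside the expectation as a single stochastic integral $\sum_{x\in\Z^d}\int_0^{tN^2}\Phi^{t,N}_s(x)\,\dd M^{\boldsymbol\alpha}_s(x)$ with $\Phi^{t,N}_s(x)\coloneqq \tfrac{\alpha_x}{N^d}\,S^{N,\boldsymbol\alpha}_{tN^2-s}G(\tfrac{x}{N})$. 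By the Itô isometry and the bilinearity of predictable covariations,
\[
  \E^{\boldsymbol \alpha}_\eta\!\left[\Bigl(\sum_x\int_0^{tN^2}\Phi^{t,N}_s(x)\,\dd M^{\boldsymbol\alpha}_s(x)\Bigr)^{\!2}\right] = \E^{\boldsymbol \alpha}_\eta\int_0^{tN^2}\!\sum_{x,y\in\Z^d}\Phi^{t,N}_s(x)\Phi^{t,N}_s(y)\,\dd\langle M^{\boldsymbol\alpha}(x),M^{\boldsymbol\alpha}(y)\rangle_s .
\]
After plugging in \eqref{eq:predictable_cov1}--\eqref{eq:predictable_cov2}, the ``row-sum-zero'' relation \eqref{eq:predictable_cov2} collapses the double sum into a single sum over unordered nearest-neighbor edges of squared discrete gradients of $\Phi^{t,N}_s$, weighted by $\alpha_x\alpha_y$ times the bounded $\eta$-factor, producing the carré-du-champ expression
\[
  \E^{\boldsymbol \alpha}_\eta\!\int_0^{tN^2}\!\sum_{\{x,y\}:\,|x-y|=1}(\Phi^{t,N}_s(x)-\Phi^{t,N}_s(y))^2\,\alpha_x\alpha_y\Bigl(\tfrac{\eta_s(x)}{\alpha_x}-\tfrac{\eta_s(y)}{\alpha_y}\Bigr)^{\!2}\dd s .
\]

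Next, the pointwise estimate $|\eta_s(x)/\alpha_x-\eta_s(y)/\alpha_y|\le 1$, immediate from $\eta_s(x)/\alpha_x,\eta_s(y)/\alpha_y\in[0,1]$, lets me drop the $\eta$-dependent factor, yielding a purely analytic upper bound, uniform in the trajectory of the process. After the change of variable $r=tN^2-s$, this bound is recognised as $N^{-2d}\int_0^{tN^2}\mathcal E_{\boldsymbol\alpha}(S^{\boldsymbol\alpha}_r G(\cdot/N),S^{\boldsymbol\alpha}_r G(\cdot/N))\,\dd r$, where $\mathcal E_{\boldsymbol\alpha}(f,f)\coloneqq-\langle A^{\boldsymbol\alpha}f,f\rangle_{\ell^2(\Z^d,\boldsymbol\alpha)}$ is the Dirichlet form of $A^{\boldsymbol\alpha}$. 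The self-adjointness of $A^{\boldsymbol\alpha}$ in $\ell^2(\Z^d,\boldsymbol\alpha)$ (cf.\ \eqref{eq:self-adjoint}) gives the energy identity $\tfrac{\dd}{\dd r}\|S^{\boldsymbol\alpha}_r f\|^2_{\ell^2(\boldsymbol\alpha)}=-2\,\mathcal E_{\boldsymbol\alpha}(S^{\boldsymbol\alpha}_r f,S^{\boldsymbol\alpha}_r f)$; integrating over $r\in[0,tN^2]$ collapses the Dirichlet-form integral into the telescoping difference $\tfrac{1}{2}\bigl(\|G(\cdot/N)\|^2_{\ell^2(\boldsymbol\alpha)}-\|S^{N,\boldsymbol\alpha}_{tN^2}G\|^2_{\ell^2(\boldsymbol\alpha)}\bigr)$. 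Dividing by $N^{2d}$ produces the first inequality in \eqref{variance}; the second is immediate from $\|S^{N,\boldsymbol\alpha}_{tN^2}G\|^2_{\ell^2(\boldsymbol\alpha)}\ge 0$.

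Since the resulting upper bound is uniform in the starting configuration $\eta\in\mathcal X^{\boldsymbol\alpha}$, it transports unchanged to the expectation $\E^{\boldsymbol\alpha}_{\nu^{\boldsymbol\alpha}_N}$, so \eqref{e1} reduces to checking that $\tfrac{1}{N^d}\sum_{x\in\Z^d}|G(x/N)|^2\alpha_x$ stays bounded in $N$; this holds because $\alpha_x\le\mathfrak c$ and $N^{-d}\sum_x|G(x/N)|^2\to\int_{\R^d}|G|^2\,\dd u<\infty$ for $G\in\mathscr S(\R^d)$ (alternatively, by Lemma \ref{lemma:ergodic_theorem} applied to $|G|^2\in L^1(\R^d)\cap\mathcal C_0(\R^d)$), so that the surviving prefactor $\tfrac{1}{2N^d}$ delivers the decay. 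The main technical point is the graph-Laplacian rearrangement in the first step: the collapse of $\sum_{x,y}\Phi^{t,N}_s(x)\Phi^{t,N}_s(y)\,\dd\langle M^{\boldsymbol\alpha}(x),M^{\boldsymbol\alpha}(y)\rangle_s$ into a single sum of squared edge differences hinges on \eqref{eq:predictable_cov2}, and one must keep careful track of the $\boldsymbol\alpha$-weights through the detailed-balance-based self-adjointness step to ensure that the edge conductances $\alpha_x\alpha_y$ emerging from \eqref{eq:predictable_cov1} match precisely those of $\mathcal E_{\boldsymbol\alpha}$, so that the energy identity applies without any loss.
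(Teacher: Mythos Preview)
Your proposal is correct and follows essentially the same route as the paper: expand the second moment via the predictable covariations \eqref{eq:predictable_cov1}--\eqref{eq:predictable_cov2}, collapse the double sum into squared nearest-neighbor gradients, bound the occupation-variable factor by $1$, recognise the remaining time integral as $-\tfrac{1}{2}\tfrac{\dd}{\dd s}\|S^{N,\boldsymbol\alpha}_{tN^2-s}G\|_{\ell^2(\Z^d,\boldsymbol\alpha)}^2$ and telescope, then invoke Lemma~\ref{lemma:ergodic_theorem} (or the cruder bound $\alpha_x\le\mathfrak c$ plus Riemann-sum convergence) for the boundedness of $\tfrac{1}{N^d}\sum_x|G(\tfrac{x}{N})|^2\alpha_x$. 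The only point where your write-up needs care is the one you already flag: when tracking the $\alpha_x$-weights through the collapse, the $\alpha_x$ you absorbed into $\Phi^{t,N}_s(x)$ must cancel against the $\alpha$-factors in the covariations so that the resulting edge differences are those of $S^{N,\boldsymbol\alpha}_{tN^2-s}G(\tfrac{\cdot}{N})$ itself (not of $\alpha_\cdot\,S^{N,\boldsymbol\alpha}_{tN^2-s}G(\tfrac{\cdot}{N})$), which is what makes the identification with $\mathcal E_{\boldsymbol\alpha}$ exact.
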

\begin{proof}
	A simple computation employing the explicit form of the predictable quadratic covariations of the martingales \eqref{eq:predictable_cov1}--\eqref{eq:predictable_cov2} yields
	\begin{align*}
	&\E^{\boldsymbol \alpha}_\eta\left[\left(   \frac{1}{N^d}\sum_{x \in \Z^d} \alpha_x \int_0^{tN^2}  S^{N,\boldsymbol \alpha}_{tN^2-s}G(\tfrac{x}{N})\, \dd M^{\boldsymbol \alpha}_s(x)     \right)^2\right]\\
	& = \int_0^{tN^2}\frac{1}{N^{2d}} \sum_{\substack{x, y \in \Z^d\\
			|x-y|=1}} \left(S^{N,\boldsymbol \alpha}_{tN^2-s}G(\tfrac{x}{N})-S^{N,\boldsymbol \alpha}_{tN^2-s}G(\tfrac{y}{N}) \right)^2 \alpha_x\alpha_y\, \E^{\boldsymbol \alpha}_\eta\left[\left(\tfrac{\eta_s(x)}{\alpha_x}-\tfrac{\eta_s(y)}{\alpha_y} \right)^2 \right]\dd s\ .		\end{align*}
	Because a.s.\ $0 \leq \left(\tfrac{\eta_s(x)}{\alpha_x}-\tfrac{\eta_s(y)}{\alpha_y} \right)^2\leq 1$, we further get
	\begin{align*}
	&\E^{\boldsymbol \alpha}_\eta\left[\left(   \frac{1}{N^d}\sum_{x \in \Z^d} \alpha_x \int_0^{tN^2}  S^{N,\boldsymbol \alpha}_{tN^2-s}G(\tfrac{x}{N})\, \dd M^{\boldsymbol \alpha}_s(x)     \right)^2\right]\\
	&\leq \frac{1}{N^d} \int_0^{tN^2} \frac{1}{N^d} \sum_{\substack{x,y\in \Z^d\\|x-y|=1}} \alpha_x\alpha_y  \left(S^{N,\boldsymbol \alpha}_{tN^2-s}G(\tfrac{x}{N})-S^{N,\boldsymbol \alpha}_{tN^2-s}G(\tfrac{y}{N}) \right)^2 \dd s\\
	&=\frac{1}{2N^d} \int_0^{tN^2} -\frac{\dd}{\dd s}\left(\frac{1}{N^d} \sum_{x\in \Z^d} \left|S^{N,\boldsymbol \alpha}_{tN^2-s} G(\tfrac{x}{N}) \right|^2\alpha_x\right)\dd s\\
	&= \frac{1}{2N^d} \left(\frac{1}{N^d}\sum_{x \in \Z^d} \left(\left|G(\tfrac{x}{N}) \right|^2 - \left|S^{N,\boldsymbol \alpha}_{tN^2}G(\tfrac{x}{N}) \right|^2\right) \alpha_x \right)\leq \frac{1}{2N^d} \left(\frac{1}{N^d}\sum_{x \in \Z^d} \left|G(\tfrac{x}{N}) \right|^2 \alpha_x \right)\ .
	\end{align*}
	In view of Lemma \ref{lemma:ergodic_theorem}, $\limsup_{N\to\infty} \frac{1}{N^d}\sum_{x \in \Z^d} \left|G(\tfrac{x}{N}) \right|^2 \alpha_x<\infty$, thus, concluding the proof.
\end{proof}

 Since, with probability one, only one particle jumps at the time,  for all environments $\boldsymbol \alpha$, and for all $T > 0$ and $G \in \mathscr S(\R^d)$, we have
\begin{equation}
	\E^{\boldsymbol \alpha}_{\nu_N^{\boldsymbol \alpha}}\left[	\sup_{t\in [0,T]} \left|\mathsf X^N_{t^+}(G)-\mathsf X^N_{t^-}(G) \right|\right]\leq \frac{2\sup_{u\in \R^d}\left|G(u)\right|}{N^d}\underset{N\to\infty}\longrightarrow 0\ .
\end{equation}
By combining this with the relative compactness of $\{\mathsf X^N_t,\, t \geq 0\}$ in $\mathcal D([0,T],\mathscr S'(\R^d))$ (see Proposition \ref{prop:fortightness}), we obtain that 
 all limit points of $\{\mathsf X^N_t,\, t \geq 0\}$ belong to $\mathcal C([0,T],\, \mathscr S'(\R^d))$.  Hence, Remark \ref{remark:uniquness} and the following proposition conclude the proof of Theorem \ref{theorem HDL}.

\begin{proposition}\label{proposition finite-dim distributions}
	Recall the definitions \eqref{eq:A}, \eqref{eq:B} and \eqref{eq:C}, and fix $\boldsymbol \alpha \in \mathfrak A\cap \mathfrak B \cap \mathfrak C$. Then,  for all $\delta > 0$,  $t \geq 0$ and $G \in \mathscr S(\R^d)$, we have
	\begin{equation}
	\P^{\boldsymbol \alpha}_{\nu_N^{\boldsymbol \alpha}}\left(\left|\mathsf X^N_t(G) - \pi^\varSigma_t(G) \right| > \delta	  \right) \underset{N \to \infty}\longrightarrow 0\ ,
	\end{equation}
	where $\{\pi^\varSigma_t,\, t \geq 0\}$ is given in \eqref{eq:pi_t}.
\end{proposition}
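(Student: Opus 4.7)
The plan is to decompose $\mathsf X^N_t(G)-\pi^\varSigma_t(G)$ into three summands and show each converges in probability to zero. Starting from the mild solution representation \eqref{elll} and invoking Remark \ref{remark:uniquness}, which gives $\pi^\varSigma_t(G)=\pi^{\bar\rho}(\mathcal S^\varSigma_t G)$, I would write
\begin{align*}
\mathsf X^N_t(G)-\pi^\varSigma_t(G)
&= \int_0^t \dd \mathsf M^N_s(S^{N,\boldsymbol \alpha}_{tN^2-s}G)\\
&\quad +\bigl(\mathsf X^N_0(S^{N,\boldsymbol \alpha}_{tN^2}G)-\mathsf X^N_0(\mathcal S^\varSigma_t G)\bigr)\\
&\quad +\bigl(\mathsf X^N_0(\mathcal S^\varSigma_t G)-\pi^{\bar\rho}(\mathcal S^\varSigma_t G)\bigr)\ .
\end{align*}
By the union bound and Markov's inequality, it suffices to handle each of these three terms separately.

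The first (martingale) term is disposed of by Lemma \ref{proposition variance}: its second moment under $\P^{\boldsymbol \alpha}_{\nu_N^{\boldsymbol \alpha}}$ is bounded above by $\tfrac{1}{2N^d}\cdot\tfrac{1}{N^d}\sum_{x\in\Z^d}|G(\tfrac{x}{N})|^2\alpha_x$. Since $\boldsymbol \alpha\in\mathfrak A$ and $|G|^2\in L^1(\R^d)\cap\mathcal C_0(\R^d)$ for $G\in\mathscr S(\R^d)$, Lemma \ref{lemma:ergodic_theorem} keeps the inner Riemann-type sum bounded, so the whole expression vanishes like $N^{-d}$; Chebyshev's inequality then delivers $L^2$-convergence to zero, hence convergence in probability. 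Notably, this step uses neither the homogenization of semigroups nor the consistency hypothesis.

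For the second term, I would exploit the deterministic pointwise bound $\eta_0(x)\leq\alpha_x$ from \eqref{equation uniform ellipticity} to get the pathwise estimate
\begin{equation*}
\bigl|\mathsf X^N_0(S^{N,\boldsymbol \alpha}_{tN^2}G)-\mathsf X^N_0(\mathcal S^\varSigma_t G)\bigr|\leq \frac{1}{N^d}\sum_{x\in\Z^d}\bigl|S^{N,\boldsymbol \alpha}_{tN^2}G(\tfrac{x}{N})-\mathcal S^\varSigma_t G(\tfrac{x}{N})\bigr|\alpha_x\ ,
\end{equation*}
uniformly over $\eta_0\in\Xa$. For $\boldsymbol \alpha\in \mathfrak A\cap\mathfrak B$, the right-hand side vanishes as $N\to\infty$ by Corollary \ref{corollary:l1} (applied at the fixed time $t$), so this second term tends to zero deterministically, thus in probability.

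The third term is the initial-condition fluctuation evaluated against the time-evolved test function $\mathcal S^\varSigma_t G$. The key observation is that $\mathscr S(\R^d)$ is invariant under the Brownian semigroup $\{\mathcal S^\varSigma_t,\, t\geq 0\}$, as recalled in Section \ref{Section HDL}; consequently $\mathcal S^\varSigma_t G\in\mathscr S(\R^d)$ and Definition \ref{definition consistency } may be applied with $\mathcal S^\varSigma_t G$ in place of $G$, yielding the desired convergence to zero in $\nu_N^{\boldsymbol \alpha}$-probability for $\boldsymbol \alpha\in\mathfrak C$. No single step in this plan constitutes a substantial technical obstacle once the three auxiliary results (Lemma \ref{proposition variance}, Corollary \ref{corollary:l1}, and consistency) are in hand; the only subtlety worth flagging is precisely this use of Schwartz invariance to \emph{transfer} the time evolution onto the test function, which is the reason why $\mathscr S(\R^d)$ (rather than, say, $\mathcal C_c^\infty(\R^d)$) was adopted as the canonical class of test functions for the empirical density fields.
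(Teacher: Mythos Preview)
Your proposal is correct and follows essentially the same route as the paper: the same mild-solution three-term decomposition (martingale noise handled by Lemma \ref{proposition variance} and Chebyshev, semigroup replacement via the pathwise bound $\eta_0(x)\leq\alpha_x$ and Corollary \ref{corollary:l1}, and the initial-condition term via Schwartz invariance of $\{\mathcal S^\varSigma_t\}$ plus consistency). The only cosmetic difference is that the paper first splits into two pieces and then bisects the deterministic one, whereas you write the three-way split at once.
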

\begin{proof} Due to the uniform boundedness of the environments $\boldsymbol \alpha$ (Assumption \ref{definition  disorder}) and the decomposition \eqref{elll} of the empirical density fields, we obtain, for all $\delta > 0$,
	\begin{multline}\label{r1}
	\P^{\boldsymbol \alpha}_{\nu_N^{\boldsymbol \alpha}}\left(\left|\mathsf X^N_t(G) - \pi^\varSigma_t(G) \right| > \delta\,  \right) \\
	\leq \P^{\boldsymbol \alpha}_{\nu_N^{\boldsymbol \alpha}}\left(\left|\mathsf X^N_0(S^{N,\boldsymbol \alpha}_{tN^2}G) - \pi^\varSigma_t(G)\right| > \frac{\delta}{2}\,  \right)+ \P^{\boldsymbol \alpha}_{\nu_N^{\boldsymbol \alpha}}\left(\left|\int_0^{tN^2} \dd\mathsf M^N_s(S^{N,\boldsymbol \alpha}_{tN^2-s}G)\right| > \frac{\delta}{2} \right)\ .
	\end{multline}
	Hence, by Chebychev's inequality and Lemma \ref{proposition variance}, the second term on the r.h.s.\ in \eqref{r1} vanishes as $N \to \infty$. Concerning the first term on the r.h.s.\ in \eqref{r1}, in view of $\pi^\varSigma_t(G)=\pi^{\bar\rho}(\mathcal S^\varSigma_t G)$ (see Remark \ref{remark:uniquness}), we proceed as follows:
	\begin{align}	\label{r2}\nonumber	
	\P^{\boldsymbol \alpha}_{\nu_N^{\boldsymbol \alpha}}\left(\left|\mathsf X^N_0(S^{N,\boldsymbol \alpha}_{tN^2}G) - \pi^\varSigma_t(G) \right|>\frac{\delta}{2}\right)
	&\leq \P^{\boldsymbol \alpha}_{\nu_N^{\boldsymbol \alpha}}\left(\left| \mathsf X^N_0(S^{N,\boldsymbol \alpha}_{tN^2}G) - \mathsf X^N_0(\mathcal S^\varSigma_tG)\right|>\frac{\delta}{4}\right)
\\
	&+ \P^{\boldsymbol \alpha}_{\nu_N^{\boldsymbol \alpha}}\left(\left|\mathsf X^N_0(\mathcal S^\varSigma_tG) - \pi^{\bar\rho}(\mathcal S^\varSigma_t G) \right|>\frac{\delta}{4}\right)\ .
	\end{align}
	For the first term on the r.h.s.\ in \eqref{r2}, by Markov's inequality and the uniform boundedness of the occupation variables $\{\eta(x),\, x \in \Z^d \}$, we obtain
	\begin{align*}
	\P^{\boldsymbol \alpha}_{\nu_N^{\boldsymbol \alpha}}\left(\left|\mathsf X^N_0(S^{N,\boldsymbol \alpha}_{tN^2}G) - \mathsf X^N_0(\mathcal S^\varSigma_tG) \right| > \frac{\delta}{4} \right) 
	&\leq \frac{4}{\delta}\frac{1}{N^d} \sum_{x \in \Z^d}  \left|S^{N,\boldsymbol \alpha}_{tN^2}G(\tfrac{x}{N})-\mathcal S^\varSigma_tG(\tfrac{x}{N})\right|\alpha_x\ .
	\end{align*}
	In turn, this latter upper bound vanishes for all environments $\boldsymbol \alpha \in \mathfrak A\cap \mathfrak B$, for all $G \in \mathscr S(\R^d)$ and $t \geq 0$,  in view of Corollary \ref{corollary:l1}.
		The second term on the r.h.s.\ in \eqref{r2} vanishes because $\mathscr S(\R^d)$ is invariant under the action of  the Brownian motion semigroup and because of the assumed consistency of the initial conditions (see Definition \ref{definition consistency }) for $\boldsymbol \alpha \in \mathfrak C$ (see \eqref{eq:C}).  
	This concludes the proof. \end{proof}

\begin{appendices}
	
\section{Mild solution  and ladder construction}\label{section:ladder}

In this section we derive the mild solution representation for $\SEPa$. More in detail, we start from a so-called $\boldsymbol \alpha$-ladder symmetric exclusion process (see, e.g., \cite{giardina_duality_2009}), we obtain  the a.s.\ mild solution representation as in e.g. \cite[Section 3]{faggionato_bulk_2007} and \cite[Proposition 4.1]{redig_symmetric_2020} for this ladder counterpart and, then, by means of a projection which preserves the Markov property, we derive an a.s.\  mild solution representation for $\SEPa$.

Let us fix a realization of the environment $\boldsymbol \alpha$  satisfying Assumption \ref{definition  disorder}. Then, we define 
\begin{align}\label{poisson}
	\{\bar{\mathcal N}_\cdot(\{(x,i),(y,j) \}): x, y \in \Z^d\ \text{with}\ |x-y|=1,\, i \in \{1,\ldots, \alpha_x\},\, j \in \{1,\ldots, \alpha_y\} \}.
\end{align}	
to be a family of independent and identically distributed compensated Poisson processes with intensity one.	 

We denote by $(\bar{\mathsf N}, \mathsf F, \{\mathsf F_t: t \geq 0 \}, \mathsf P)$ the probability space on which this compensated Poisson processes are defined.
This randomness will be responsible (see Lemma \ref{lemma mild solution ladder} below) for the stirring construction (see, e.g., \cite[p.\ 399]{liggett_interacting_2005-1}) of the so-called \emph{ladder  symmetric exclusion process with parameter $\boldsymbol \alpha \in  \{1,\ldots,\mathfrak c\}^{\Z^d}$}, the particle system
with configuration space 
\begin{equation}\label{ladder space}
\tilde \Xa = \{\tilde \eta: \tilde \eta(x,i) \in \{0,1 \}\ \text{for all}\ x \in \Z^d\ \text{and}\ i \in \{1,\ldots,\alpha_x\}\}
\end{equation} 
and with infinitesimal generator $\tilde L^{\boldsymbol \alpha}$ acting on  bounded cylindrical functions $\tilde \varphi : \tilde \Xa \to \R$ as follows:
\begin{equation} \label{equation generator ladder}
\tilde L^{\boldsymbol \alpha} \tilde \varphi(\tilde \eta)\ =\  \sum_{\substack{\{x,y\}\in \Z^d\\|x-y|=1}}  \tilde L^{\boldsymbol \alpha}_{xy} \tilde \varphi(\tilde \eta)\ ,
\end{equation}
where
\begin{align*}
\tilde L^{\boldsymbol \alpha}_{xy}\tilde \varphi(\tilde \eta)= \sum_{i=1}^{\alpha_x}\sum_{j=1}^{\alpha_y} &\left\{\tilde \eta(x,i)\, (1-\tilde \eta(y,j))\, (\tilde \varphi(\tilde \eta^{(x,i),(y,j)})- \tilde \varphi(\tilde \eta)) \right.\\
&\left. +\ \tilde \eta(y,j)\, (1-\tilde \eta(x,i))\, (\tilde \varphi(\tilde \eta^{(y,j),(x,i)})-\tilde \varphi(\tilde \eta)) \right\}\ .
\end{align*}
Here $\tilde \eta^{(x,i), (y,j)}$ denotes, also in this context, the configuration obtained from $\tilde \eta\in \tilde {\mathcal X}^{\boldsymbol\alpha}$ by removing a particle at position $(x,i)$ and placing it on $(y,j)$.

This process may be considered as a special case of a  symmetric exclusion process  on the set $\tilde \Z^d = \{(x,i),\, x \in \Z^d,\, i \in \{1,\ldots,\alpha_x\}\}$. For this reason and from the uniform boundedness assumption of the environment, we obtain the following representation of $\{\tilde \eta_t,\,	 t \geq 0 \}$, whose proof is completely analogous to the one  of, e.g., \cite[Section 3]{faggionato_bulk_2007}  and \cite[Proposition 4.3]{redig_symmetric_2020}. We  restate this result below for convenience of the reader.

\begin{lemma}[{Mild solution for the ladder exclusion}]\label{lemma mild solution ladder} Fix an environment $\boldsymbol \alpha \in \{1,\ldots, \mathfrak c\}^{\Z^d}$.
	For $\mathsf P$-a.e.\ realization of the compensated Poisson processes $\{\bar{\mathcal N}_\cdot(\{\cdot,\cdot \}) \}$ and for all initial configurations $\tilde \eta \in \tilde {\mathcal X}_\alpha $, we have, for all $(x,i) \in \tilde \Z^d$ and $t \geq 0$,
	\begin{equation}\label{mild solution ladder}
	\tilde \eta_t(x,i) = \tilde S_t^{\boldsymbol \alpha} \tilde \eta_0(x,i) + \int_0^t \tilde  S_{t-s}^{\boldsymbol \alpha}\, \dd\tilde M^{\boldsymbol \alpha}_s(x,i)\ .
	\end{equation}
In the above formula, $\{\tilde S_t^{\boldsymbol \alpha},\ t \geq 0\}$,  resp.\ $\{\tilde p_t^{\boldsymbol \alpha}(\cdot,\cdot),\ t \geq 0\}$, corresponds to the transition semigroup, resp.\ probabilities, associated to the continuous-time random walk on $\tilde \Z^d$ whose infinitesimal generator $\tilde A^{\boldsymbol \alpha}$ is given below:
\begin{equation*}
	\tilde A^{\boldsymbol \alpha} f(x,i)\ =\ \sum_{\substack{y \in \Z^d\\|y-x|=1}} \sum_{j=1}^{\alpha_y} (f(y,j)-f(x,i))\ ,\qquad (x,i) \in \tilde \Z^d\ ,
\end{equation*}	
where $f:\tilde \Z^d\ \to \R$ is a bounded function. Moreover,  for all $(x,i) \in \tilde \Z^d$ and $t, s \geq 0$, 
	\begin{equation}\label{eq:martingales_tilde}
	\dd \tilde M^{\boldsymbol \alpha}_s(x,i)\equiv \dd\tilde M^{\boldsymbol \alpha}_s((x,i), \tilde \eta_{s^-}) \coloneqq \sum_{\substack{y \in \Z^d\\|y-x|=1}} \sum_{j=1}^{\alpha_y}(\tilde \eta_{s^-}(y,j)-\tilde \eta_{s^-}(x,i))\, \dd\bar{\mathcal N}_s(\{(x,i), (y,j)\})\ ,
	\end{equation}
and
\begin{equation*}
	\int_0^t \tilde  S_{t-s}^{\boldsymbol \alpha}\, \dd\tilde M^{\boldsymbol \alpha}_s(x,i)\coloneqq		\sum_{y \in \Z^d} \sum_{j=1}^{\alpha_y}  \int_0^t  \tilde p_{t-s}^{\boldsymbol \alpha}((x,i),(y,j))\, \dd\tilde M^{\boldsymbol \alpha}_s(y,j)\ ,
\end{equation*}
where the above time-integrals are Lebesgue-Stieltjes integrals w.r.t.\ the realizations of the compensated	 Poisson  	processes.
Furthermore, the infinite summations in \eqref{mild solution ladder} are $\mathsf P$-a.s.\ -- for all times and initial configurations -- absolutely convergent.
\end{lemma}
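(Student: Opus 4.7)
The plan is to adapt the mild-solution strategy of \cite{nagy_symmetric_2002,faggionato_bulk_2007,redig_symmetric_2020}, which was developed for the simple exclusion process, to the ladder setting. The crucial observation is that the ladder SEP is a simple symmetric exclusion process on the enlarged graph $\tilde \Z^d$ with bounded degree (uniformly in the realization of $\boldsymbol \alpha$, by Assumption \ref{definition  disorder}), so the algebraic cancellations that make SSEP a ``gradient'' system for its occupation variables remain available.

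First, I would give meaning to $\{\tilde \eta_t,\, t \geq 0\}$ pathwise via the standard stirring/graphical construction: for $\mathsf P$-a.e.\ realization of the Poisson marks, whenever a mark at bond $\{(x,i),(y,j)\}$ occurs at time $s$, swap the contents of $(x,i)$ and $(y,j)$. Finiteness of the rates around each site, together with the usual percolation argument of \cite[Chapter 1]{liggett_interacting_2005-1}, ensures that for $\mathsf P$-a.e.\ realization of the Poisson processes the resulting trajectory $t \mapsto \tilde \eta_t$ is well defined, c\`adl\`ag and adapted to $\{\mathsf F_t,\, t \geq 0\}$. Writing the resulting jump dynamics in integral form against the compensated Poisson differentials $\dd \bar{\mathcal N}_s(\{(x,i),(y,j)\})$ and adding back the compensator $\dd s$ gives the Dynkin-type identity
\begin{equation*}
\tilde \eta_t(x,i)=\tilde \eta_0(x,i)+\int_0^t \tilde L^{\boldsymbol \alpha}\tilde \eta_s(\cdot)(x,i)\, \dd s + \tilde M^{\boldsymbol \alpha}_t(x,i)\ ,
\end{equation*}
with $\tilde M^{\boldsymbol \alpha}_t(x,i)$ as in \eqref{eq:martingales_tilde}. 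The key reduction is the SSEP gradient identity: a short computation using $\tilde \eta(z,k)\in \{0,1\}$ shows that the quadratic terms in $\tilde L^{\boldsymbol \alpha}\tilde \eta(\cdot)(x,i)$ cancel and one is left with $\tilde A^{\boldsymbol \alpha}\tilde \eta_s(\cdot)(x,i)$, turning the above into a \emph{linear} stochastic integral equation in $\tilde \eta$.

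Second, I would obtain \eqref{mild solution ladder} by variation of constants. The cleanest route is truncation: for each finite box $\Lambda \subseteq \tilde \Z^d$, consider the ladder SEP with jumps suppressed outside $\Lambda$; this is a finite-dimensional linear jump SDE, so its unique solution is given by the Duhamel formula with $\{\tilde S_t^{\boldsymbol \alpha,\Lambda},\, t \geq 0\}$ in place of $\{\tilde S_t^{\boldsymbol \alpha},\, t \geq 0\}$. Sending $\Lambda \uparrow \tilde \Z^d$ and using that $\tilde S_t^{\boldsymbol \alpha,\Lambda}\to \tilde S_t^{\boldsymbol \alpha}$ pointwise (by monotonicity and the fact that $\tilde p^{\boldsymbol \alpha}_t((x,i),\cdot)$ is a probability on $\tilde \Z^d$) together with the boundedness $\tilde \eta \in \{0,1\}^{\tilde \Z^d}$ identifies the limit with the right-hand side of \eqref{mild solution ladder}. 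Uniqueness for the limit is a Gr\"onwall argument in $\ell^\infty(\tilde \Z^d)$.

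The main obstacle is the last assertion: for $\mathsf P$-a.e.\ realization of the Poisson marks, the infinite sum defining the stochastic convolution converges absolutely, uniformly on bounded time intervals and uniformly in the initial condition. For this I would bound, by It\^o's isometry for the compensated Poisson integrals and uniform ellipticity,
\begin{equation*}
\mathsf E\!\left[\left(\int_0^t \tilde p^{\boldsymbol \alpha}_{t-s}((x,i),(y,j))\, \dd \tilde M^{\boldsymbol \alpha}_s(y,j) \right)^2\right]\leq C\, \mathfrak c \int_0^t \tilde p^{\boldsymbol \alpha}_{t-s}((x,i),(y,j))^2\, \dd s\ ,
\end{equation*}
which, summed over $(y,j)$ and using that $\sum_{(y,j)}\tilde p^{\boldsymbol \alpha}_{t-s}((x,i),(y,j))=1$, gives an $L^2$-bound that is finite and locally bounded in $t$. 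A combination of Doob's maximal inequality with Borel--Cantelli along a diadic subdivision of $[0,T]$, plus the H\"older-type equicontinuity available for the ladder random walk semigroup (analogous to Proposition \ref{prop:holder cont}), upgrades this to $\mathsf P$-a.s.\ absolute convergence uniformly on $[0,T]$; the bound is uniform in $\tilde \eta_0 \in \tilde{\mathcal X}^{\boldsymbol \alpha}$ because it only uses $|\tilde \eta_s(z,k)|\leq 1$. This closes the proof.
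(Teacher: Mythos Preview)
The paper does not give a proof of this lemma at all: it simply observes that the ladder process is a symmetric simple exclusion process on the bounded-degree graph $\tilde \Z^d$ and says the proof is ``completely analogous'' to \cite[Section 3]{faggionato_bulk_2007} and \cite[Proposition 4.3]{redig_symmetric_2020}. Your proposal is precisely to reproduce that argument, so the overall strategy (stirring construction, Dynkin formula, the $\{0,1\}$-valued cancellation $\tilde L^{\boldsymbol \alpha}\tilde\eta(\cdot)(x,i)=\tilde A^{\boldsymbol \alpha}\tilde\eta(\cdot)(x,i)$, Duhamel via finite-volume truncation) is the right one and matches the paper.

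There is, however, a genuine gap in your sketch of the absolute-convergence step. Your It\^o-isometry bound, summed over $(y,j)$, yields
\[
\sum_{(y,j)}\mathsf E\!\left[\left(\int_0^t \tilde p^{\boldsymbol \alpha}_{t-s}((x,i),(y,j))\,\dd\tilde M^{\boldsymbol \alpha}_s(y,j)\right)^{\!2}\right]<\infty\ ,
\]
but this only gives $\sum_{(y,j)}|\cdot|^2<\infty$ a.s., which does \emph{not} imply $\sum_{(y,j)}|\cdot|<\infty$ (think $a_n=1/n$). Doob's inequality and Borel--Cantelli along dyadic times do not repair this: they control suprema in $t$ of a fixed summand, not summability over $(y,j)$. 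The route taken in the cited references is simpler and works directly: bound the total variation. Since each $\tilde M^{\boldsymbol \alpha}(y,j)$ is a compensated Poisson integral with uniformly bounded integrand and rate $\leq 2d\mathfrak c$, one has $\mathsf E\big[\int_0^t \dd|\tilde M^{\boldsymbol \alpha}_s(y,j)|\big]\leq C\,t$ uniformly in $(y,j)$ and $\tilde\eta_0$, whence by Tonelli
\[
\mathsf E\!\left[\sum_{(y,j)}\int_0^t \tilde p^{\boldsymbol \alpha}_{t-s}((x,i),(y,j))\,\dd|\tilde M^{\boldsymbol \alpha}_s(y,j)|\right]\leq C\int_0^t\sum_{(y,j)}\tilde p^{\boldsymbol \alpha}_{t-s}((x,i),(y,j))\,\dd s=C\,t\ ,
\]
which gives $\mathsf P$-a.s.\ absolute convergence for each $t$, uniformly in the initial configuration; extension to all $t\ge 0$ simultaneously then follows by monotonicity in $t$ of the total-variation integral and a countable exhaustion. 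No H\"older equicontinuity is needed here.
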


We leave to the reader to check  that, $\mathsf P$-a.s., for all times $t \geq 0$ and initial configurations $\tilde \eta \in \tilde \Xa$, 
 the predictable quadratic covariations of the martingales $\{\tilde M_t^{\boldsymbol \alpha}(\cdot),\, t \geq 0\}$ in \eqref{mild solution ladder} read as 
\begin{equation}\label{eq:predictable_cov_tilde}
\langle \tilde M^{\boldsymbol \alpha}(x,i), \tilde M^{\boldsymbol \alpha}(y,j)\rangle_t=	- \ind_{\{\left|x-y\right|=1\}}\int_0^t  \left(\tilde \eta_s(x,i)-\tilde \eta_s(y,j) \right)^2\dd s
\end{equation}
for $(x,i), (y,j) \in \tilde \Z^d$ with $x\neq y$, 
and
\begin{equation}\label{eq:predictable_cov_tilde2}
	\langle \tilde M^{\boldsymbol \alpha}(x,i), \tilde M^{\boldsymbol \alpha}(x,i)\rangle_t = -\sum_{\substack{y\in \Z^d\\\left|x-y\right|=1}}\sum_{j=1}^{\alpha_y}\, \langle \tilde M^{\boldsymbol \alpha}(x,i), \tilde M^{\boldsymbol \alpha}(y,j)\rangle_t
\end{equation}
for $(x,i)\in \tilde \Z^d$.

In the following lemma, we show how to obtain  $\SEPa$, with generator given in \eqref{equation generator inhomog SEP},  from the ladder symmetric exclusion process with parameter $\boldsymbol \alpha$ (see, e.g., \cite{giardina_duality_2009} for further details on this construction). By combining this result with Lemma \ref{lemma mild solution ladder}, we obtain a mild solution representation of $\SEPa$ which employs the same randomness used to define the ladder process. 
\begin{lemma}[{Mild solution  for  $\SEPa$}]\label{lemma mild solution exclusion}Fix an environment $\boldsymbol \alpha \in \{1,\ldots, \mathfrak c\}^{\Z^d}$.
	
	Let  $\eta \in \Xa$ and $\tilde \eta \in \tilde \Xa$ be configurations satisfying the following relation:
	\begin{equation}\label{eq:eta_tilde_eta}
		\eta(x)=\sum_{i=1}^{\alpha_x}\tilde \eta(x,i)\ ,\qquad x \in \Z^d\ .
	\end{equation}
	Let $\{\tilde \eta_t,\, t \geq 0 \}$ be the ladder symmetric exclusion process with parameter $\boldsymbol \alpha$, started from $\tilde \eta \in \tilde \Xa$ presented above and represented as in Lemma \ref{lemma mild solution ladder}. Then, the stochastic process $\{\eta_t,\, t \geq 0\}$ taking values in $\Xa$ defined in terms of $\{\tilde \eta_t,\,	 t \geq 0 \}$ as follows
	\begin{align}\label{equation ladder constructin SEP alpha}
	\eta_t(x)\coloneqq \sum_{i=1}^{\alpha_x} \tilde \eta_t(x,i)\ ,\qquad t \geq 0\ ,\ x \in \Z^d\ ,
	\end{align}
	is a Markov process with infinitesimal generator $L^{\boldsymbol \alpha}$ as given in \eqref{equation generator inhomog SEP} and started from $\eta \in \Xa$.
	
	Moreover, for $\mathsf P$-a.e.\ realization of the compensated Poisson processes in \eqref{poisson} and for all initial configurations $\eta \in \Xa$,  we have  (cf.\ the definition of the semigroup $\{S^{\boldsymbol \alpha}_t,\, t \geq 0\}$ in Section \ref{section strategy proof}, as well as \eqref{mild solution}--\eqref{eq:martingale_occupation_variable})
	\begin{equation}\label{mild solution exclusion}
	\left(\tfrac{\eta_t}{\alpha}\right)(x) = S_t^{\boldsymbol \alpha}(\tfrac{\eta}{\alpha})(x)\, +\, \int_0^t S_{t-s}^{\boldsymbol \alpha}\, \dd M^{\boldsymbol \alpha}_s(x)
		\ ,\qquad t \geq0\ ,\ x \in \Z^d\ ,
	\end{equation}
	where  
	\begin{equation}	\label{mart}
	\dd M^{\boldsymbol \alpha}_s(x) \coloneqq \frac{1}{\alpha_x}\sum_{i=1}^{\alpha_x} \dd\tilde M^{\boldsymbol \alpha}_s((x,i))\ ,\qquad x \in \Z^d\ ,
	\end{equation}	
	with $\{\tilde M^{\boldsymbol \alpha}_t(\cdot),\, t \geq 0\}$ being the martingales given in \eqref{eq:martingales_tilde} and defined in terms of the ladder exclusion process $\{\tilde \eta_t,\, t \geq 0\}$ started from any configuration $\tilde \eta \in \tilde \Xa$ related to $\eta \in \Xa$ as in \eqref{eq:eta_tilde_eta}; furthermore, the predictable quadratic covariations of the  martingales in \eqref{mart}  are those given in \eqref{eq:predictable_cov1}--\eqref{eq:predictable_cov2}.
	
\end{lemma}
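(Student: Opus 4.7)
The plan is to combine the lumpability of the ladder process with the explicit mild-solution representation of Lemma \ref{lemma mild solution ladder}. Let $\pi:\tilde{\mathcal X}^{\boldsymbol\alpha}\to \Xa$ denote the projection $\pi(\tilde\eta)(x)=\sum_{i=1}^{\alpha_x}\tilde\eta(x,i)$, so that $\eta_t = \pi(\tilde\eta_t)$ by \eqref{equation ladder constructin SEP alpha}. The first step is to verify, for every bounded cylindrical $\varphi:\Xa\to\R$, the intertwining identity
\begin{equation*}
\tilde L^{\boldsymbol\alpha}(\varphi\circ\pi)(\tilde\eta)\ =\ L^{\boldsymbol\alpha}\varphi(\pi(\tilde\eta))\ .
\end{equation*}
For a fixed edge $\{x,y\}$ with $|x-y|=1$ and any $\tilde\eta$ projecting to $\eta$, each ordered pair $(i,j)$ with $\tilde\eta(x,i)=1$ and $\tilde\eta(y,j)=0$ produces a swap $\tilde\eta^{(x,i),(y,j)}$ whose projection is $\eta^{x,y}$, and the number of such pairs is exactly $\eta(x)(\alpha_y-\eta(y))$; the symmetric count for the reverse direction reproduces the second term in \eqref{equation generator inhomog SEP}. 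Standard Markovian lumpability (Dynkin's criterion) then upgrades this pointwise identity of generators to the claim that $\{\eta_t,\,t\ge 0\}$ is Markov with generator $L^{\boldsymbol\alpha}$, started from $\pi(\tilde\eta)$.

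To derive the mild-solution representation \eqref{mild solution exclusion}, I would first establish the semigroup identity
\begin{equation*}
\tilde p^{\boldsymbol\alpha}_t\bigl((x,i),(y,j)\bigr)\ =\ \frac{p^{\boldsymbol\alpha}_t(x,y)}{\alpha_y}\ ,\qquad (x,i),\,(y,j)\in\tilde\Z^d\ .
\end{equation*}
This follows from two observations: (a) $\tilde A^{\boldsymbol\alpha}$ is invariant under permutations of the second ladder coordinate at each site, so $\tilde p^{\boldsymbol\alpha}_t((x,i),(y,j))$ does not depend on $i$ or $j$; (b) projecting the ladder walk onto the first coordinate yields the walk $\RWa$ with generator $A^{\boldsymbol\alpha}$, since each neighbour $y$ aggregates $\alpha_y$ indices of unit-rate jumps, hence $\sum_j \tilde p^{\boldsymbol\alpha}_t((x,i),(y,j))=p^{\boldsymbol\alpha}_t(x,y)$. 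Summing \eqref{mild solution ladder} over $i\in\{1,\ldots,\alpha_x\}$ and dividing by $\alpha_x$, the deterministic term collapses to $\sum_y p^{\boldsymbol\alpha}_t(x,y)\,\eta(y)/\alpha_y = S^{\boldsymbol\alpha}_t(\eta/\alpha_\cdot)(x)$, while the stochastic convolution becomes $\int_0^t S^{\boldsymbol\alpha}_{t-s}\,\dd M^{\boldsymbol\alpha}_s(x)$ with $\dd M^{\boldsymbol\alpha}_s$ exactly as in \eqref{mart}.

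For the predictable quadratic covariations, bilinearity yields, for $x\neq y$,
\begin{equation*}
\langle M^{\boldsymbol\alpha}(x),M^{\boldsymbol\alpha}(y)\rangle_t\ =\ \frac{1}{\alpha_x\alpha_y}\sum_{i=1}^{\alpha_x}\sum_{j=1}^{\alpha_y}\langle \tilde M^{\boldsymbol\alpha}(x,i),\tilde M^{\boldsymbol\alpha}(y,j)\rangle_t\ ,
\end{equation*}
and substitution of \eqref{eq:predictable_cov_tilde}--\eqref{eq:predictable_cov_tilde2}, together with $\sum_i\tilde\eta(x,i)=\eta(x)$ and the idempotency $\tilde\eta(x,i)^2=\tilde\eta(x,i)$, yields \eqref{eq:predictable_cov1}; identity \eqref{eq:predictable_cov2} is then automatic from \eqref{eq:predictable_cov_tilde2}. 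As a consistency check with the first step, one may reprove \eqref{eq:predictable_cov1} directly through the carré du champ $\Gamma^{\boldsymbol\alpha}(f,g)=L^{\boldsymbol\alpha}(fg)-fL^{\boldsymbol\alpha}g-gL^{\boldsymbol\alpha}f$ applied to $f(\eta)=\eta(x)/\alpha_x$ and $g(\eta)=\eta(y)/\alpha_y$.

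The main obstacle is preserving the $\mathsf P$-a.s.\ character of \eqref{mild solution exclusion} once the summations over the ladder coordinates are carried out: the infinite summations over $y\in\Z^d$ and $j\in\{1,\ldots,\alpha_y\}$ must be interchanged with the Lebesgue-Stieltjes integrals against the compensated Poisson processes, pathwise. The required dominated-convergence control is supplied by the exponential spatial decay of $p^{\boldsymbol\alpha}_t(x,\cdot)$ from Proposition \ref{proposition:heat_kernel_bounds} combined with the uniform bound $\alpha_x\le\mathfrak c$, mirroring the stochastic-Fubini argument already employed in Lemma \ref{lemma mild solution ladder}.
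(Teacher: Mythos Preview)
Your proposal is correct and follows essentially the same route as the paper: establish Markovianity of the projected process via the generator intertwining (the paper cites \cite[Theorem 4.2(a)]{giardina_duality_2009} for this), exploit the label-independence of $\tilde p^{\boldsymbol\alpha}_t((x,i),(y,j))$ together with $p^{\boldsymbol\alpha}_t(x,y)=\alpha_y\,\tilde p^{\boldsymbol\alpha}_t(x,y)$, and then sum the ladder mild solution over $i$ to obtain \eqref{mild solution exclusion}. One small simplification: you do not need the heat kernel bound of Proposition \ref{proposition:heat_kernel_bounds} for the interchange of summations, since the sum over $i$ is finite and the $\mathsf P$-a.s.\ absolute convergence of the infinite sum over $(y,j)$ is already part of the statement of Lemma \ref{lemma mild solution ladder}.
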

\begin{proof}
Arguing as in \cite[Theorem 4.2(a)]{giardina_duality_2009}, the process $\{\eta_t,\, t \geq 0\}$ defined in \eqref{equation ladder constructin SEP alpha} is Markov; furthermore,  it is simple to check that, by uniqueness in law of the solution to the martingale problem associated to	 $(L^{\boldsymbol \alpha},\eta)$ (see, e.g., \cite[Chapter 1]{liggett_interacting_2005-1}), its  infinitesimal generator is $L^{\boldsymbol \alpha}$ (we refer to \cite[Section 4.1]{giardina_duality_2009} for further details).

As for the second part of the claim, by definition of the process $\{\eta_t,\,  t \geq 0 \}$ in terms of the process $\{\tilde \eta_t,\, t \geq 0 \}$ and formula \eqref{mild solution ladder}, we obtain, $\mathsf P$-a.s., for all $x \in \Z^d$ and $t \geq 0$, the following expression for $\eta_t(x)$:
	\begin{align}\nonumber
	\eta_t(x) \coloneqq&\ \sum_{i=1}^{\alpha_x} \tilde \eta_t(x,i)\\ =&\ \sum_{i=1}^{\alpha_x} \sum_{y \in \Z^d} \sum_{j=1}^{\alpha_y} \left( \tilde p_t^{\boldsymbol \alpha}((x,i),(y,j))\, \tilde \eta_0(y,i) +	 \int_0^t  \tilde p_{t-s}^{\boldsymbol \alpha}((x,i),(y,j))\, \dd\tilde M^{\boldsymbol \alpha}_s(y,j) \right)\ .
	\end{align}
	Since the infinite summations above are absolutely convergent, we may re-order them so to obtain:
	\begin{equation*}
	\eta_t(x)\ =\ \sum_{y \in \Z^d} \mathcal Y_t(y)\ ,
	\end{equation*}
	where
	\begin{equation}\label{eq:Y_t(y)}
	\mathcal Y_t(y) \coloneqq \sum_{i=1}^{\alpha_x}  \sum_{j=1}^{\alpha_y} \tilde p_t^{\boldsymbol \alpha}((x,i),(y,j))\, \tilde \eta_0(y,j)+\, \int_0^t \sum_{i=1}^{\alpha_x}  \sum_{j=1}^{\alpha_y} \tilde p_{t-s}^{\boldsymbol \alpha}((x,i),(y,j))\, \dd \tilde M^{\boldsymbol \alpha}_s(y,j)\ .
	\end{equation}	
	We observe that, for all sites $x, y \in \Z^d$ and labels $i,i' \in \{1,\ldots,\alpha_x\}$, $j,j'\in \{1,\ldots,\alpha_y\}$, $\tilde p_t^{\boldsymbol \alpha}((x,i),(y,j)) = \tilde p_t^{\boldsymbol \alpha}((x,i'),(y,j'))$; in other words, the transition probabilities $\tilde p_t^{\boldsymbol \alpha}(\cdot,\cdot)$ do not depend on the labels, but only on the sites. Therefore, we  define $\tilde p_t^{\boldsymbol \alpha}(x,y):= \tilde p_t^{\boldsymbol \alpha}((x,i),(y,j)) $. If we combine this with the definition of $\eta_0(y): = \sum_{j =1}^{\alpha_y} \tilde \eta_0(y,j)$,  the expression in \eqref{eq:Y_t(y)} rewrites as follows:
	\begin{equation*}
	\mathcal Y_t(y) = \alpha_x\,  \tilde p_t^{\boldsymbol \alpha}(x,y)\, \eta_0(y)\, +\, \int_0^t \alpha_x\, \tilde p_{t-s}^{\boldsymbol \alpha}(x,y)\, \sum_{j=1}^{\alpha_y} \dd\tilde M^{\boldsymbol \alpha}_s((y,j),\tilde \eta_{s^-})\ .
	\end{equation*}
	Recalling from Section \ref{section strategy proof} the definition of  transition probabilities $\{p^{\boldsymbol \alpha}_t(\cdot,\cdot),\, t \geq 0\}$ associated to $\RWa$ and after observing that 
	\begin{equation}
		p_t^{\boldsymbol \alpha}(x,y)=\sum_{j=1}^{\alpha_y}\tilde p_t^{\boldsymbol \alpha}((x,i),(y,j)) = \alpha_y\,  \tilde p_t^{\boldsymbol \alpha}(x,y)\ ,
	\end{equation} the proof of the identity \eqref{mild solution exclusion} is concluded.

In order to recover the predictable quadratic covariations \eqref{eq:predictable_cov1}--\eqref{eq:predictable_cov2} for  the martingales $\{M^{\boldsymbol\alpha}_t(\cdot),\, t \geq 0\}$, it suffices to combine \eqref{mart} with \eqref{eq:predictable_cov_tilde}--\eqref{eq:predictable_cov_tilde2} and \eqref{equation ladder constructin SEP alpha}; we leave the details to the reader.
\end{proof}

We take the construction and \eqref{equation ladder constructin SEP alpha} in Lemma \ref{lemma mild solution exclusion} as a definition of our partial exclusion process $\SEPa$. In particular, we consider the process $\{\eta_t,\, t\geq 0 \}$  as a Markov functional of the ladder process $\{\tilde \eta_t,\,	 t \geq 0 \}$, whose evolution, in turn, is prescribed in Lemma \ref{lemma mild solution ladder} in terms of the compensated Poisson processes $\{\bar{\mathcal N}(\cdot,\cdot) \}$ in \eqref{poisson} and its initial configuration $\tilde \eta_0 \in \tilde \Xa$. 

However, to any given $\SEPa$-configuration $\eta \in \Xa$ there may correspond, in general, many \textquotedblleft compatible ladder configurations\textquotedblright, namely configurations $\tilde \eta \in \tilde \Xa$ of the following type:
\begin{align*}
\left\{\tilde \eta \in \tilde \Xa: \sum_{i=1}^{\alpha_x} \tilde \eta(x,i) = \eta(x)\ \text{for all}\ x \in \Z^d \right\}\ .
\end{align*} 
Therefore, when we say that the particle system $\{\eta_t,\, t \geq 0 \}$ starts from the configuration $\eta \in \Xa$, we first need to specify how to initialize the underlying ladder process and, then, unequivocally follow the Poissonian source of randomness yielding  \eqref{mild solution exclusion} and \eqref{mart}. We will always  assume that, given an initial configuration $\eta \in \Xa$, the compatible ladder configurations $\tilde \eta \in \tilde \Xa$ are chosen according to some probability distribution \emph{independent} of the compensated Poisson processes in \eqref{poisson}. We can, for instance, make the deterministic choice of filling up the ladders at each site starting from bottom to top.

\section{Proofs of auxiliary results}\label{appendix:ergodic}
In order to fix notation, for all compact subsets $\mathcal K\subseteq \R^d$, $\mathcal C_b(\mathcal K)$ (resp.\ $\mathcal C_c(\mathcal K)$) denotes the space of continuous and bounded (resp.\ compactly supported) functions from $\mathcal K$ to $\R$ endowed with the supremum norm, while $\mathcal M_+(\mathcal K)$ denotes the space of non-negative finite Borel measures on $\mathcal K$ endowed with the weak$^*$ topology w.r.t.\ $\mathcal C_b(\mathcal K)$. Moreover, for all $\mu \in \mathcal M_+(\mathcal K)$ and $F \in \mathcal C_b(\mathcal K)$, we define
\begin{equation}
\mu(F) \coloneqq \int_{\mathcal K} F(u)\, \mu(\dd u)\ .
\end{equation}

\subsection{Proof of Lemma \ref{lemma:ergodic_theorem}}
\begin{proof}[Proof of Lemma \ref{lemma:ergodic_theorem}]
	The methodology of the proof is inspired by \cite[Theorem 8.2.18]{bogachev_measure_2007}.
	
	By applying \cite[Proposition 3.2]{faggionato_stochastic_2020} to the integrable function $g:\boldsymbol \alpha \in \{1,\ldots, \mathfrak c\}^{\Z^d}\mapsto\alpha_0 \in \R$, there exists a (translation invariant) measurable subset $\mathfrak A \subseteq \{1,\ldots, \mathfrak c\}^{\Z^d}$ such that  $\mathcal P(\mathfrak A)=1$ holds,  as well as  
	\begin{equation}\label{eq:pointwise_ergodic1}
		\left|\frac{1}{N^d} \sum_{x \in \Z^d} G(\tfrac{x}{N})\, \alpha_x - \E_{\mathcal P}\left[\alpha_0 \right]\int_{\R^d} G(u)\, \dd u \right|\underset{N\to \infty}\longrightarrow 0
	\end{equation}
	hold for all $\boldsymbol \alpha \in \mathfrak A$ and $G \in \mathcal C_c(\R^d)$, the subspace of $\mathcal C_0(\R^d)$ of compactly supported functions.
	
	In the remainder of this proof, 	$\boldsymbol \alpha \in \mathfrak A$; moreover,  we define
	\begin{equation}
		\mathsf Y^{N,\boldsymbol \alpha}\coloneqq \frac{1}{N^d}\sum_{x\in \Z^d}\delta_{\frac{x}{N}}\, \alpha_x\qquad \text{and}\qquad \mathsf Y\coloneqq \E_{\mathcal P}\left[\alpha_0\right] \dd u
	\end{equation}
as elements in $\mathscr S'(\R^d)$.

	Recall from the proof of Theorem \ref{theorem:uniform_convergence_semigroups} the definitions of the open and closed Euclidean balls $\mathcal B_\ell(u)$ and $\closure{\mathcal B_\ell(u)}$. Then,	for all $\ell > 0$, since  the restriction  map $|_{\closure{\mathcal B_\ell(0)}}: \mathcal C_c(\R^d)\to \mathcal C_c(\closure{\mathcal B_\ell(0)})$ is onto and since $\mathcal C_c(\closure{\mathcal B_\ell(0)})\equiv \mathcal C_b(\closure{\mathcal B_\ell(0)})$, \eqref{eq:pointwise_ergodic1} implies that, for all $\boldsymbol \alpha \in \mathfrak A$,  $\mathsf Y^{N,\boldsymbol \alpha}_\ell$ weakly converge as non-negative finite Borel measures as $N\to \infty$  to $\mathsf Y_\ell$, where
	\begin{equation}
		\mathsf Y^{N,\boldsymbol \alpha}_\ell(\dd u)\coloneqq \frac{1}{N^d}\sum_{\frac{x}{N}\in \closure{\mathcal B_\ell(0)}} \delta_{\frac{x}{N}}(\dd u)\, \alpha_x\qquad \text{and}\qquad \mathsf Y_\ell(\dd u)\coloneqq \E_{\mathcal P}\left[\alpha_0 \right]		 \ind_{\{u \in \closure{\mathcal B_\ell(0)}\}}\, \dd u\ .
	\end{equation}
	By the compactness of $\closure{\mathcal B_\ell(0)}\subseteq \R^d$, for all $\delta > 0$, there exists  a finite sub-cover $\mathcal U_\ell(\delta)\coloneqq\left\{\mathcal B_\delta(u_i) \right\}_{i=1}^n$ of open balls of radius $\delta > 0$ (with 	$n=n(\delta)\in \N$). Moreover, by defining recursively $V_1\coloneqq \mathcal B_\delta(u_1)\cap \closure{\mathcal B_\ell(0)}$ and  $V_i\coloneqq  \{\mathcal B_\delta(u_i)\cap\closure{\mathcal B_\ell(0)}\}\setminus V_{i-1}$, it is simple to check that the pairwise disjoint sets $\mathcal V_\ell(\delta)\coloneqq \left\{V_i \right\}_{i=1}^n$ 	cover $\closure{\mathcal B_\ell(0)}$ and $\mathsf Y_\ell(\partial V_i)=0$ for all $i=1,\ldots, n$, where $\partial V_i$ denotes the boundary of $V_i$ in the subspace topology on $\closure{\mathcal B_\ell(0)}$. Hence, 
	\begin{align}\nonumber\label{eq:upper_bound_ergodic}
		\sup_{F\in \mathcal F}\left|\mathsf Y^{N,\boldsymbol \alpha}_\ell(F)-\mathsf Y_\ell(F) \right|\leq&\ \sup_{F \in \mathcal F}\sum_{i=1}^n \frac{1}{N^d}\sum_{\frac{x}{N}\in V_i}\left|F(\tfrac{x}{N})-F(u_i) \right|\alpha_x  \\
		\nonumber
		+&\ \sup_{F\in \mathcal F} \sup_{u\in \R^d}\left|F(u)\right| \sum_{i=1}^n \left|\frac{1}{N^d}\sum_{\frac{x}{N}\in V_i}\alpha_x - \int_{V_i}\E_{\mathcal P}\left[\alpha_0 \right] \dd u  \right|	\\
		+&\ \sup_{F \in \mathcal F}\sum_{i=1}^n\E_{\mathcal P}\left[\alpha_0 \right] \int_{V_i}\left|F(u)-F(u_i) \right|\, \dd u\ .
	\end{align}
	The boundedness of $\mathcal F$
	(see \eqref{eq:bounded}),
	and the convergence (recall that $\mathsf Y^{N,\boldsymbol \alpha}_\ell$ weakly converges to $\mathsf Y_\ell$ as $N\to \infty$ as well as	 $\mathsf Y_\ell(\partial V_i)=0$ for all $i=1,\ldots, n$)
	\begin{equation}
		\mathsf Y^{N,\boldsymbol \alpha}_\ell(V_i)\underset{N\to \infty}\longrightarrow \mathsf Y_\ell(V_i)\ ,\qquad i = 1, \ldots, n\ ,
	\end{equation}
	ensure that, for all $\delta >0$,  
	the second term on the r.h.s.\ in \eqref{eq:upper_bound_ergodic} vanishes as $N\to \infty$:
	\begin{equation}\label{eq:second_term}
		\sup_{F\in \mathcal F} \sup_{u\in \R^d}\left|F(u)\right| \sum_{i=1}^n \left|\frac{1}{N^d}\sum_{\frac{x}{N}\in V_i}\alpha_x - \int_{V_i}\E_{\mathcal P}\left[\alpha_0 \right] \dd u  \right|\underset{N\to \infty}\longrightarrow 0\ .	
	\end{equation}	
	The first and third terms on the r.h.s.\ in \eqref{eq:upper_bound_ergodic} are both bounded above by
	\begin{equation}
		\sup_{\substack{u,v \in \R^d\\|u-v|<\delta}}\sup_{F \in \mathcal F}\left|F(u)-F(v) \right|\left\{\mathsf Y^{N,\boldsymbol \alpha}_\ell(\closure{\mathcal B_\ell(0)})+ \mathsf Y_\ell(\closure{\mathcal B_\ell(0)}) \right\}\ ;
	\end{equation}
	hence, by the definition \eqref{eq:equicontinuous} of 	equicontinuity of the subset $\mathcal F\subseteq \mathcal C_0(\R^d)$
	and
	\begin{equation}
		\limsup_{N \to \infty}\mathsf Y^{N,\boldsymbol \alpha}_\ell(\closure{\mathcal B_\ell(0)})+ \mathsf Y_\ell(\closure{\mathcal B_\ell(0)})= 2 \mathsf Y_\ell(\closure{\mathcal B_\ell(0)})<\infty\ ,
	\end{equation}
	we obtain
	\begin{equation}\label{eq:first_third_terms}
		\lim_{\delta \downarrow 0}\limsup_{N\to \infty}	\sup_{F \in \mathcal F}\left\{\sum_{i=1}^n \frac{1}{N^d}\sum_{\frac{x}{N}\in V_i}\left|F(\tfrac{x}{N})-F(u_i) \right|\alpha_x  +\sum_{i=1}^n\E_{\mathcal P}\left[\alpha_0 \right] \int_{V_i}\left|F(u)-F(u_i) \right|\, \dd u \right\}=0\ .
	\end{equation}
	Hence, \eqref{eq:second_term} and \eqref{eq:first_third_terms} combined with \eqref{eq:upper_bound_ergodic} yield, for all $\ell > 0$,
	\begin{equation}\label{eq:convergence_inside}
		\sup_{F\in \mathcal F}\left|\mathsf Y^{N,\boldsymbol \alpha}_\ell(F)-\mathsf Y_\ell(F)  \right|\underset{N\to \infty}\longrightarrow 0\ .
	\end{equation}

	The uniform integrability assumption (see \eqref{eq:uniform_integrability}) 
	and the upper bound $\alpha_x\leq \mathfrak c< \infty$ (see Assumption \ref{definition  disorder}) ensure	 	
	\begin{equation}\label{eq:convergence_outside}
		\lim_{\ell \to \infty}\limsup_{N\to \infty}	\sup_{F \in \mathcal F}\left\{\frac{1}{N^d}\sum_{\left|\frac{x}{N}\right|>\ell} \left|F(\tfrac{x}{N}) \right|\alpha_x + \E_{\mathcal P}\left[\alpha_0\right] \int_{\left\{|u|>\ell\right\}}\left|F(u)\right|\dd u\right\}=0\ .	
	\end{equation}
	The triangle inequality
	\begin{align}\nonumber
		\sup_{F\in \mathcal F}\left|\mathsf Y^{N,\boldsymbol \alpha}(F)-\mathsf Y(F) \right|\leq&\ \sup_{F \in \mathcal F}\left|\mathsf Y^{N,\boldsymbol \alpha}_\ell(F)-\mathsf Y_\ell(F) \right|\\
		+&\ \sup_{F \in \mathcal F}\left\{\frac{1}{N^d}\sum_{\left|\frac{x}{N}\right|>\ell} \left|F(\tfrac{x}{N}) \right|\alpha_x + \E_{\mathcal P}\left[\alpha_0\right] \int_{\left\{|u|>\ell\right\}}\left|F(u)\right|\dd u\right\}\ ,
	\end{align}	
	which holds for all $\ell> 0$ and $N\in \N$, combined with \eqref{eq:convergence_inside} and \eqref{eq:convergence_outside}, yields the desired result. 
\end{proof} 

\subsection{Proof of Corollary \ref{corollary:l1}}

\begin{proof}[Proof of Corollary \ref{corollary:l1}]
In what follows, let $\boldsymbol \alpha$ be an environment in the subset $\mathfrak A\cap \mathfrak B\subseteq \{1,\ldots, \mathfrak c\}^{\Z^d}$ (see \eqref{eq:A} and \eqref{eq:B}).   Fix $T > 0$ and $G \in \mathscr S(\R^d)\subseteq \mathcal C_0(\R^d)$. Let $G^+$ and $G^-$ be the positive  and negative parts of $G$ ($G=G^+-G^-$); then, $G^\pm \in    L^1(\R^d)\cap \mathcal C_0(\R^d)$  (hence they satisfy 
	\eqref{eq:uniform_convergence_semigroups})  and 
	 there exist functions $H^\pm \in \mathscr S(\R^d)$ (see, e.g., \cite[Proposition 5.3]{redig_symmetric_2020} for an explicit construction) such that
	\begin{align}\label{eq:boundGH}
		0 \leq G^\pm(u) \leq H^\pm(u)\ ,\qquad u \in \R^d\ .
	\end{align} 
 As a consequence, there exist constants $C^\pm > 0$ such that
	\begin{align}\label{eq:integrability}
		\sup_{0 \leq t \leq T} |\mathcal S^\varSigma_t G^\pm(u)| \leq \frac{C^\pm}{1+|u|^{2d}}\ ,\qquad u \in \R^d\ .
	\end{align}
	This follows from the bounds \eqref{eq:boundGH}, the fact that	 $\mathcal S^\varSigma_t$ acts as convolution with a non-degenerate Gaussian kernel and the use of Fourier transformation in $\mathscr S(\R^d)$. Moreover,  because of the uniform continuity of $G^\pm$ and the contractivity of the semigroup in $\mathcal C_0(\R^d)$, we have
	\begin{align*}
		\sup_{t\in [0,T]} \sup_{|u-v|<\delta}\left|\mathcal S^\varSigma_tG^\pm(u)-\mathcal S^\varSigma_tG^\pm(v)\right|\ \leq\  \sup_{t\in [0,T]} \sup_{|u-v|<\delta}\left|G^\pm(u)-	G^\pm(v)\right|\  \underset{\delta \to 0}\longrightarrow\ 0\ .
	\end{align*}
As a consequence, for all  $T > 0$,  both subsets of $\mathcal C_0(\R^d)$ given by
\begin{equation}
	\mathcal F_{[0,T]}(G^\pm)\coloneqq\left\{\mathcal S^\varSigma_t G^\pm \in \mathcal C_0(\R^d): t \in [0,T] \right\}
\end{equation}
satisfy the assumptions in  Lemma \ref{lemma:ergodic_theorem}. Therefore,  since $\boldsymbol \alpha \in \mathfrak A$,  Lemma \ref{lemma:ergodic_theorem} ensures that, for all $G\in \mathscr S(\R^d)$ and $T >  0$,  we have
	\begin{align}\label{eq:uniform_convergence_integrals}
		\sup_{t\in[0,T]} \left|\frac{1}{N^d} \sum_{x \in \Z^d} \mathcal S^\varSigma_t G^\pm(\tfrac{x}{N})\, \alpha_x - \int_{\R^d} \mathcal S^\varSigma_t G^\pm(u)\, \E_{\mathcal P}\left[\alpha_0\right] \dd u \right|\ \underset{N\to \infty}\longrightarrow\ 0\ .
	\end{align}

	Let us now prove
	\begin{align} \label{l1 convergence propagators pm}
		\sup_{t\in [0,T]} \frac{1}{N^d} \sum_{x \in \Z^d} \left|S^{N,\boldsymbol \alpha}_{tN^2}G^\pm(\tfrac{x}{N}) - \mathcal S^\varSigma_t G^\pm(\tfrac{x}{N}) \right|\alpha_x\ \underset{N \to \infty}\longrightarrow\  0\ ,
	\end{align}
	from which \eqref{eq:l1sup_convergence} follows.

	Since $|c|= c +2 \max\{-c,0 \}$ for all $c \in \R$, we have
	\begin{align}\label{eq:bound222}\nonumber
		&\sup_{t\in [0,T]} \frac{1}{N^d} \sum_{x \in \Z^d} \left|S^{N,\boldsymbol \alpha}_{tN^2}G^\pm(\tfrac{x}{N}) - \mathcal S^\varSigma_t G^\pm(\tfrac{x}{N}) \right|\alpha_x\\
		\nonumber
		&\leq\ \sup_{t\in[0,T]} \frac{1}{N^d} \sum_{x \in \Z^d} \left(S^{N,\boldsymbol \alpha}_{tN^2}G^\pm(\tfrac{x}{N}) - \mathcal S^\varSigma_t G^\pm(\tfrac{x}{N})\right)\alpha_x \\
		&+\ \sup_{t\in [0,T]} \frac{2}{N^d} \sum_{x \in \Z^d}  \max\left\{\mathcal S^\varSigma_t	 G^\pm(\tfrac{x}{N})-S^{N,\boldsymbol \alpha}_{tN^2}G^\pm(\tfrac{x}{N}), 0 \right\}\alpha_x\ .
	\end{align}
As for  the first term in the r.h.s.\ above, by  detailed balance (see \eqref{eq:detailed_balance_probabilities}), 
	$\sum_{x \in \Z^d}  p^{\boldsymbol \alpha}_{tN^2}(y,x)= 1$, 
	as well as  $\int_{\R^d} \mathcal S^\varSigma_t G^\pm(u)\, \dd u = \int_{\R^d} G^\pm(u)\, \dd u $, we obtain 
	\begin{align*}
		& \sup_{t\in [0,T]} \left|\frac{1}{N^d} \sum_{x \in \Z^d} \left(S^{N,\boldsymbol \alpha}_{tN^2}G^\pm(\tfrac{x}{N}) - \mathcal S^\varSigma_t G^\pm(\tfrac{x}{N})\right)\alpha_x\right|\\ 
		&=\  \sup_{t\in [0,T]}\left|\frac{1}{N^d}  \sum_{y \in \Z^d} G^\pm(\tfrac{y}{N})\, \alpha_y \sum_{x \in \Z^d}   p^{\boldsymbol \alpha}_{tN^2}(y,x) - \frac{1}{N^d} \sum_{x \in \Z^d} \mathcal S^\varSigma_{t-s} G^\pm(\tfrac{x}{N})\, \alpha_x \right|\\
		&\leq\ \left|\frac{1}{N^d} \sum_{x \in \Z^d} G^\pm(\tfrac{x}{N})\, \alpha_x - \int_{\R^d} G^\pm(u)\,  \E_{\mathcal P}\left[\alpha_0\right] \dd u \right|\\
		&+\ \sup_{t \in [0,T]}	\left|\frac{1}{N^d} \sum_{x \in \Z^d} \mathcal S^\varSigma_t G^\pm(\tfrac{x}{N})\, \alpha_x - \int_{\R^d} \mathcal S^\varSigma_t G^\pm(u)\,  \E_{\mathcal P}\left[\alpha_0\right] \dd u \right|\ ;
	\end{align*}
thus, the first expression on the r.h.s.\ of \eqref{eq:bound222} vanishes as $N \to \infty$ by \eqref{eq:uniform_convergence_integrals}.

	Moreover, we have, for all $N \in \N$ and $x \in \Z^d$, 
	\begin{align}\label{eq:kkk}
		& \sup_{t\in [0,T]} \max\left\{\mathcal S^\varSigma_tG^\pm(\tfrac{x}{N})- S^{N,\boldsymbol\alpha}_{tN^2}G^\pm(\tfrac{x}{N}),\, 0 \right\} \alpha_x \leq \sup_{t\in [0,T]} \mathcal S^\varSigma_t G^\pm(\tfrac{x}{N})\, \alpha_x\ .
	\end{align}
	Therefore,  for all $\ell > 0$ and combining \eqref{eq:kkk} and \eqref{eq:integrability}, we obtain		
	\begin{align}\nonumber
		&\limsup_{N \to \infty} \sup_{t\in [0,T]} \frac{2}{N^d} \sum_{x \in \Z^d}  \max\left\{\mathcal S^\varSigma_t G^\pm(\tfrac{x}{N})-S^{N,\boldsymbol \alpha}_{tN^2}G^\pm(\tfrac{x}{N}),\, 0 \right\}\alpha_x\\
		\label{eq:bound4}
		&\leq   \limsup_{N \to \infty}     \sup_{t\in [0,T]} \sup_{
			|\frac{x}{N}| \leq \ell}\left|\mathcal S^\varSigma_t G^\pm(\tfrac{x}{N})-S^{N,\boldsymbol \alpha}_{tN^2}G^\pm(\tfrac{x}{N})\right| \frac{1}{N^d}\sum_{\left| \frac{x}{N}\right|\leq \ell}\alpha_x\\
		\label{eq:bound5}
		&+ \limsup_{N \to \infty}   \frac{2}{N^d} \sum_{\left|\frac{x}{N}\right| > \ell }  \frac{C^\pm\, \alpha_x}{1+|\tfrac{x}{N}|^{2d}}\ .
	\end{align}
By Theorem \ref{theorem:uniform_convergence_semigroups} applied to the functions $G^\pm$ and $\sup_{N\in \N} \frac{1}{N^d}\sum_{\left|\frac{x}{N}\right|\leq \ell}\alpha_x<\infty$,  \eqref{eq:bound4} equals zero for all $\ell > 0$, while \eqref{eq:bound5} vanishes as $\ell \to \infty$.
This concludes the proof.
\end{proof}

\end{appendices}

\paragraph{Acknowledgements.}
The authors would like to thank Marek Biskup and Alberto Chiarini for useful suggestions and Cristian Giardin\`{a}, Frank den Hollander and Shubhamoy Nandan for  inspiring discussions. S.F. acknowledges Simona Villa for  her help in creating the  picture.
Furthermore, the authors thank two anonymous referees for the careful reading of the manuscript. 
S.F. acknowledges  financial support from NWO via the  grant TOP1.17.019. F.S.\ acknowledges financial support from NWO via the TOP1 grant 613.001.552 as well as  funding from the European Union's Horizon 2020 research and innovation programme under the Marie-Sk\l{}odowska-Curie grant agreement  No.\ 754411.

\bibliographystyle{acm}

\end{document}